\newtheorem{prop}{Proposition}[section]
\newtheorem{theorem}[prop]{Theorem}
\newtheorem{cor}[prop]{Corollary}
\newtheorem{lemma}[prop]{Lemma}
\theoremstyle{definition}
\newtheorem{Def}[prop]{Definition}
\newtheorem{rem}[prop]{Remark}
\newtheorem{example}[prop]{Example}
\newtheorem{conjecture}[prop]{Conjecture}
\numberwithin{equation}{section}
\def\R{\Bbb R}
\def\Dx{\Delta_x}
\def\Nx{\nabla_x}
\def\Dt{\partial_t}
\def\({\left(}
\def\){\right)}
\def\eb{\varepsilon}
\def\Cal{\mathcal}
\def\l{\lambda}
\def\l2l2{\Cal{L}(L^2(\Omega))}
\def\dist{\operatorname{dist}}
\def\Bbb{\mathbb}
\def\<{\left<}
\def\>{\right>}
\begin{document}
\title[Wave equation with sign changing damping]{Deterministic and random attractors for a wave equation with sign changing damping}
\author[Chang, Li, Sun and Zelik] {Qingquan Chang${}^1$ , Dandan Li${}^1$ , Chunyou Sun${}^1$ , and Sergey Zelik${}^{1,2}$}

\begin{abstract} The paper gives a detailed study of long-time dynamics generated by weakly damped wave equations in bounded 3D domains where the damping exponent depends explicitly on time and may change sign. It is shown that in the case when the non-linearity is superlinear, the considered equation remains dissipative if the weighted mean value of the dissipation rate remains positive and that the conditions of this type are not sufficient in the linear case. Two principally different cases are considered. In the case when this mean is uniform (which corresponds to deterministic dissipation rates), it is shown that the considered system possesses smooth uniform attractors as well as non-autonomous exponential attractors. In the case where the mean is not uniform (which corresponds to the random dissipation rate, for instance, when this dissipation rate is generated by the Bernoulli process), the tempered random attractor is constructed. In contrast to the usual situation, this random attractor is expected to have infinite Hausdorff and fractal dimension. The simplified model example which demonstrates infinite-dimensionality of the random attractor is also presented.
\end{abstract}

\subjclass[2000]{35B40, 35B45, 35L70}

\keywords{Damped Wave Equation, Negative Damping, Random Dynamics, Attractors, Asymptotic Regularity, Infinite Dimensional Attractors}
\thanks{
 This work is partially supported by  the RSF grant   19-71-30004  as well as  the EPSRC grant EP/P024920/1 and NSFC grants No. 11471148, 11522109, 11871169.
  The authors  would also like to thank D. Turaev and V. Kalantarov for stimulating discussions.}

\address{${}^1$ \phantom{e}School of Mathematics and Statistics, Lanzhou University, Lanzhou  \\ 730000,
P.R. China}
\email{changqq12@lzu.edu.cn; lidd2008@lzu.edu.cn; sunchy@lzu.edu.cn}
\address{${}^2$ University of Surrey, Department of Mathematics, Guildford, GU2 7XH, United Kingdom.}
 \email{s.zelik@surrey.ac.uk}

\maketitle
\tableofcontents
\section{Introduction}
The paper gives a comprehensive study of the following semilinear wave equation
\begin{equation}\label{0.wave}
\Dt^2 u+\gamma(t)\Dt u-\Dx u+f(u)=g
\end{equation}
in a bounded smooth domain $\Omega$ of $\R^3$ endowed with Dirichlet boundary conditions. Here $\Dx$ is the Laplacian with respect to the variable $x\in\Omega$, $f(u)$ and $g(x)$ are given non-linear interaction function and the external force respectively, and $\gamma(t)$ is the dissipation rate which in contrast to the standard situation may change sign.
\par
Various types of equations in the form of \eqref{0.wave} is of a great  permanent interest. On the one hand, they model many important phenomena arising in a modern science, for instance, in quantum mechanics, see \cite{Shiff,Segal}; semiconductor devices (e.g., Josephson junctions, see \cite{Mazo} and references therein); propagation of waves in a transmission wire (the so-called telegraph equation, see \cite{Hayt1989,Marshall1990}); geophysical flows, see e.g.,
 \cite{A.Majda2003,J.Pedlosky1987}; mathematical biology, see e.g., \cite{K.P.Hadeler1996}, etc.
\par
On the other hand, this type of equations also possess a nice and deep mathematical theory branching initially very different topics such as inverse scattering, harmonic analysis, Strichartz type estimates, non-concentration estimates and Pohozhaev-Morawetz inequalities, etc., which makes the theoretical study of these equations also interesting and important, see \cite{Grill,SS,SS1,Sogge08} and references therein.
\par
It is believed that the analytic properties of solutions of \eqref{0.wave} depend mainly on the sign and the growth rate of the nonlinearity $f$ (the dissipative term $\gamma(t)\Dt u$ is subordinated and is not essential when the solutions on the finite time interval are considered). Namely,  if
\begin{equation}\label{0.p}
f(u)=u|u|^p+\text{"lower order terms"}
\end{equation}
which will be always assumed in this paper, these properties are related with the value of the exponent $p$ (the sign assumption is already incorporated in this condition and we will not consider the self-focusing case $f(u)\sim-u|u|^p$ in this work).
\par
The key tool for the mathematical study of these equations is the so-called energy identity which can be obtained by formal multiplication of \eqref{0.wave} by $\Dt u$ and integrating over $x$:
\begin{equation}\label{0.energy}
\frac d{dt}\(\frac12\|\Dt u\|^2_{L^2}+\frac12\|\Nx u\|^2_{L^2}+(F(u),1)-(g,u)\)+\gamma(t)\|\Dt u\|^2_{L^2}=0,
\end{equation}
where $F(u):=\int_0^uf(v)\,dv$, which determines the energy phase space of the problem considered
$$
E:=[H^1_0(\Omega)\cap L^{p+2}(\Omega)]\times L^2(\Omega),\ \ \xi_u:=\{u,\Dt u\}\in E
$$
and gives the natural control of the energy norm of a solution.
\par
In the energy subcritical and critical cases $p<2$ and $p=2$ respectively, the global existence and uniqueness of
the energy solutions (=solutions with finite energy)  can be obtained relatively easily since the energy  control \eqref{0.energy} is enough to treat the non-linear term $f(u)$ as a perturbation, see e.g. \cite{Lions, BV, Temam}. In contrast to this, in the supercritical case $p>4$, the global well-posedness of equation \eqref{0.wave} remains an open problem. Indeed, similarly to the 3D Navier-Stokes problem, we have here global existence of weak energy solutions (without uniqueness) and local existence of smooth solutions (which a priori may blow up in finite time), see \cite{CV02, ZelDCDS} and references therein for more details.
\par
The most interesting here is an intermediate case $2<p\le 4$. In this case, the only energy control \eqref{0.energy} is {\it not sufficient} to treat the non-linearity properly, but it remains subordinated to the linear part if more delicate space-time integrability properties for the solutions of the linear equation are used. The global well-posedness of energy solutions for the case of $\Omega=\R^n$ and $p<4$ has been obtained in \cite{Jorgens,Gin}.
\par
The quintic case $p=4$ is much more difficult since it is not clear how to "lift" the extra space-time integrability from linear equation to the non-linear one (at least in a straightforward way). For instance, in order to get uniqueness, one needs
the so-called Strichartz estimates for the solutions of \eqref{0.wave} like
\begin{equation}\label{0.str}
u\in L^4_{loc}(\R,L^{12}(\Omega)),
\end{equation}
see \cite{Strichartz,Sogge08,BSogge} and references therein. In order to prove that this Strichartz norm does not blow up in finite time, one usually exploits the so-called non-concentration estimates and Pohozhaev-Morawetz inequalities, see \cite{Sogge08,SS,SS1}. These results were first obtained for the case $\Omega=\R^3$, but then have been extended to the case of bounded domains as well based on relatively recent results for Strichartz estimates in bounded domains, see \cite{Plan1,Plan2,BSogge}. Note also that it is still an open problem whether or not the extra regularity \eqref{0.str} holds for any energy solutions, so we will refer to the energy solutions satisfying \eqref{0.str} as Shatah-Struwe (SS) solutions.
\par
We now discuss the asymptotic behavior of solutions of \eqref{0.wave} as $t\to\infty$. Of course, the structure of the dissipation term $\gamma(t)\Dt u$ plays a crucial role here. The most studied is the case where the dissipation rate $\gamma(t)$ is strictly positive and separated from zero, for instance, $\gamma\equiv const>0$. In this case, the energy identity \eqref{0.energy} gives in an immediate way the global Lyapunov functional. This functional trivializes the long-time dynamics and guarantees the convergence of trajectories to the set of equilibria. Nevertheless, even in this case proving the asymptotic compactness (which is necessary to get the convergence in the energy space) and smoothness of the so-called global attractor may be a nontrivial task. For energy subcritical case $p<2$ this result has been obtained in \cite{Haraux82,Haraux85} (see also \cite{BV,Temam,Hale1988,Hale04,Caraballo2010} and reference therein), the energy critical case $p=2$ has been treated in \cite{BV,Arrieta1992} (see also \cite{ZelDCDS} for treating the non-autonomous case and \cite{Li2017,Li2019} for degenerate case), the subcritical case $p<4$ has been studied in \cite{Feireisl} and \cite{Kapitanski} for the case of the whole space $\Omega=\R^3$ or periodic boundary conditions respectively. The critical case $p=4$ in general bounded domains has been considered in \cite{KSZ}. Note that the prove given there used in a crucial way the Lyapunov functional and cannot be extended to non-autonomous case. This drawback has been overcome in \cite{SZ} for the case of periodic boundary conditions using the so-called energy-to-Strichartz estimates, but obtaining such estimates in a critical case for general domains is still an open problem. This is the reason why we mainly consider the subcritical case $p<4$ in this paper. Mention also that some of the results can be extended to the supercritical case $p>4$ using the so-called trajectory attractors technique to overcome possible non-uniqueness, see \cite{CV02,ZelDCDS} for details.
\par
The next well-studied case is when the dissipation rate $\gamma$ is still non-negative but may be equal to zero at some nontrivial subset of $\Omega$. In this case, \eqref{0.energy} does not give immediately the global Lyapunov functional, so some new technique should come into play. In the case where $\gamma=\gamma(x)$ is degenerate, but non-negative damping, the results on the existence and further regularity of attractors are usually obtained based on a combination of two types of estimates: 1) Carleman type estimates which allows to get a global Lyapunov function and 2) the exponential decay estimates for the linear equation ($f(u)=0$) which requires the so-called geometric control conditions on the support of $\gamma$, see \cite{Bardos,Burg,Feireisl93,Hale04,Rauch,Zuazua} and references therein.  The complementary case when the degenerate dissipation rate $\gamma=\gamma(t)\ge0$ depends only on $t$ is also intensively studied, see \cite{Haraux05,Haraux2013,Martinez,Smith} and references therein although the general case $\gamma=\gamma(t,x)\ge0$ looks not properly studied yet.
\par

\par
In contrast to this, not much is known for the case where the dissipation rate may change sign. The key difference here is that the right-hand side of the energy equality \eqref{0.energy} is no more non-negative, so the global Lyapunov functional disappear and even proving the global boundedness or/and dissipativity of  solutions becomes a  non-trivial problem. It worth mentioning also that in absence of a Lyapunov functional the associated dynamics can easily be chaotic, this may be observed even in the simplest examples, see e.g., \cite{NO}.
\par
 To the best of our knowledge (at least for PDEs of the form \eqref{0.wave}), only the case where the negative part $\gamma_-(t):=-\max\{0,-\gamma(t)\}$ is small with respect to the positive part and can be treated as a perturbation are studied in the literature, see \cite{Fragnelli2008,Fragnelli2012,Freitas1996,Haraux05,Haraux2013,R.Joly2007,Martinez} and references therein. In addition, in all of the mentioned above papers, the linear equation (which corresponds to the case $f(u)=g=0$) is assumed to be stable, so the non-linearity is, in a sense, treated as a perturbation which does not affect the dissipation mechanism. Some exception from this is the paper \cite{KZstr} where doubly nonlinear strongly damped wave equation with sign-changing dissipation rate is considered.
 \par
 On the other hand, the classical model example here is  Van der Pol equation
 \begin{equation}\label{0.van}
 y''+(y^2-1)y'+y=0
 \end{equation}
 which describes generation of auto-oscillations in various physical systems arising, say, in radio-electronics, classical mechanics, biology, etc., see \cite{NO} and references therein (the PDE analogue of this equation as well as related Fitz-Hugh-Nagumo equation,  describes oscillatory processes in excitable media, see e.g., \cite{Car}). Equation \eqref{0.wave} can be considered as a simplified model for such problems where the nonlinear damping term  is replaced by $\gamma(t):=y^2(t)-1$ for some special solution  $y(t)$ of an ODE, e.g, of the Van der Pol equation or its multi-dimensional analogue.
\par
The example of the Van der Pol equation guesses that the assumption that the linear part of the equation must be stable is too restrictive and that the equation can be stabilized by the non-linear terms. As we will see later this is exactly the case for equation \eqref{0.wave}. Surprisingly, the global stability analysis of the non-linear problem \eqref{0.wave} with $f(u)$ satisfying \eqref{0.p} with $p>0$ is {\it simpler} than in the linear case and reasonable conditions for $\gamma(t)$ can be stated in this case.
\par
Thus, the ultimate goal of the present paper is to give a detailed study of equation \eqref{0.wave} in the super-linear case where $f$ satisfies \eqref{0.p} with $p>0$. Our main assumption on the dissipation rate is the following:
\begin{equation}\label{0.dissipative}
\liminf_{T\to\infty}\frac1T\int_{\tau-T}^\tau \(\frac12\gamma_+(t)-\frac{p+2}{p+4}\gamma_-(t)\)dt>0,\ \ \tau\in\R,
\end{equation}
where $\gamma_+=\max\{0,\gamma\}$, $\gamma_-=\gamma_+-\gamma$.
The heuristic arguments showing why this condition looks necessary are given in Section \ref{s1}.
\par
 We start with the discussion why the analogue of \eqref{0.dissipative} does not work in the linear case. The simplest model example here is the following ODE:
\begin{equation}\label{0.linODE}
y''+\gamma(t)y'+\omega^2 y=0
\end{equation}
with time-periodic $\gamma$, where the stability analysis  is already an interesting and non-trivial task. Indeed, the standard periodic change of variables, see Section \ref{s1}, transform this equation to the classical Mathieu-Hill's equation
\begin{equation}\label{0.MH}
y''+\<\gamma\>y'+ (\omega^2+\psi(t))y=0,
\end{equation}
where $\<\gamma\>$ is the mean value of $\gamma$ over the period and the periodic function $\psi(t)$ is calculated via $\gamma(t)$. This equation describes  resonances in
parametrically excited mechanical systems, see \cite{NO} and references therein; stability and bifurcations  of limit cycles, see e.g., \cite{KH}; etc. It also can be interpreted (at least when $\<\gamma\>$=0) as a Schr\"odinger equation with periodic potential which is central  in the quantum theory of solids, see e.g., \cite{Cycon}. It is well-known that the stability analysis for this equation is complicated and is not described by conditions like \eqref{0.dissipative}, see \cite{Magnus} and references therein. The case when $\gamma$ is not periodic (for instance, random), the situation becomes more difficult since the effects related with Anderson localization come into play, see \cite{Cycon}.
\par
In the present paper we demonstrate that assumptions like \eqref{0.dissipative} do not work in the linear case by proving the following result, see Proposition \ref{Proc1.ext} in Section \ref{s1}.

\begin{prop}\label{Prop0.lin} Let $a,b,\omega>0$ be arbitrary. Then there exists  $2\pi$-periodic function $\gamma\in L^1(0,2\pi)$ such that $\<\gamma_+\>=a$, $\<\gamma_-\>=b$ and equation \eqref{0.linODE} is exponentially unstable.
\end{prop}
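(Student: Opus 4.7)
The plan is Floquet-theoretic: write \eqref{0.linODE} as a planar first-order system $Y'=A(t)Y$ with $Y=(y,y')^\top$, and observe that exponential instability is equivalent to the monodromy operator $M$ over one period $[0,2\pi]$ having an eigenvalue $\lambda$ with $|\lambda|>1$. Liouville's formula yields $\det M=\exp(-\int_0^{2\pi}\gamma\,dt)=e^{-2\pi(a-b)}$, so the case $b>a$ is immediate. In the remaining case $a\ge b$ it suffices to produce $\gamma$ such that $|\operatorname{tr} M|>1+\det M=1+e^{-2\pi(a-b)}$, which places a real eigenvalue outside the unit circle.

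For the construction I would take $\gamma$ to be a pair of narrow $L^1$-spikes per period: a negative one of mass $-2\pi b$ near $t=0$ and a positive one of mass $+2\pi a$ near $t=t_*\in(0,2\pi)$, which realises $\langle\gamma_\pm\rangle=a,b$. Passing to the distributional limit (justified by continuous dependence of the ODE flow on $L^1$-data), the solution between impulses follows the free oscillator $y''+\omega^2 y=0$, which acts as a rotation $R(\theta)$ in the phase plane over phase $\theta$, while an impulse of total mass $c$ produces the jump $y'(t_+)=e^{-c}y'(t_-)$ with $y$ continuous, encoded by $J(c)=\operatorname{diag}(1,e^{-c})$. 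Thus
\begin{equation*}
M=R(\theta_2)\,J(2\pi a)\,R(\theta_1)\,J(-2\pi b),\qquad \theta_1=\omega t_*,\ \theta_1+\theta_2=2\pi\omega,
\end{equation*}
and a direct multiplication yields
\begin{equation*}
\operatorname{tr} M=(1+e^{2\pi(b-a)})\cos\theta_1\cos\theta_2 - (e^{-2\pi a}+e^{2\pi b})\sin\theta_1\sin\theta_2.
\end{equation*}
The identity $(e^{-2\pi a}+e^{2\pi b})-(1+e^{2\pi(b-a)})=(1-e^{-2\pi a})(e^{2\pi b}-1)>0$ shows that the coefficient of the $\sin\sin$-term strictly exceeds the stability threshold $1+\det M$, so choosing $t_*$ so that $|\sin\theta_1\sin\theta_2|$ is close to its maximum gives $|\operatorname{tr} M|>1+\det M$. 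An $L^1$-bump approximation of the Dirac masses with the same integrals then produces the required $\gamma\in L^1$.

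\textbf{The main difficulty} lies in the regime of small $\omega$ combined with small $a,b$: the constraint $\theta_1+\theta_2=2\pi\omega<\pi$ forces $|\sin\theta_1\sin\theta_2|\le\sin^2(\pi\omega)\ll 1$, while $e^{-2\pi a}+e^{2\pi b}$ is close to $2$, so the two-spike monodromy stays inside the stability region. To cover this regime I would pass to the damped Hill equation \eqref{0.MH} via the change of variables $y=\exp(-\tfrac12\int_0^t(\gamma-\langle\gamma\rangle))z$ mentioned in the paper. The crucial observation is that the mean constraints $\langle\gamma_\pm\rangle=a,b$ do not control $\|\gamma\|_\infty$: by making the spikes arbitrarily tall and narrow one can give the effective potential $\psi$ in \eqref{0.MH} arbitrarily large Fourier amplitudes at any prescribed frequency. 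Classical parametric-resonance theory for Hill's equation then provides an instability tongue containing the given $\omega$ and Floquet multipliers of the underlying Hill equation that grow fast enough to overcome the damping factor $e^{-\pi(a-b)}$; exponential instability of \eqref{0.linODE} follows.
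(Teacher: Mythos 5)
Your first half is correct and is essentially the paper's own argument: Proposition \ref{Proc1.ext} is proved there with exactly such a two-spike $\gamma_h$ (narrow $L^1$-bumps of prescribed masses), computing the limit of the period map as the spike width tends to zero as a product of free rotations and jump matrices $\operatorname{diag}(1,e^{-c})$ and reading off the limiting monodromy $\operatorname{diag}(e^{b},e^{-a})$ up to $O(h)$. Your instability criterion $|\operatorname{tr}M|>1+\det M$ and the identity $(e^{-2\pi a}+e^{2\pi b})-(1+e^{2\pi(b-a)})=(1-e^{-2\pi a})(e^{2\pi b}-1)>0$ are both right, and the case $b>a$ is indeed immediate from Liouville. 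Note, however, that the paper carries the construction out only for $\omega=1$ (equation \eqref{1.scalar}), where the spikes can be placed a quarter of the internal period apart so that $\sin\theta_1\sin\theta_2=\pm1$ and $\cos\theta_1\cos\theta_2=0$; the same works whenever $2\omega\in\mathbb{Z}$, but the constraint $\theta_1+\theta_2=2\pi\omega$ only allows $|\sin\theta_1\sin\theta_2|$ up to $\max(\sin^2\pi\omega,\cos^2\pi\omega)$, which is why your argument does not close for general $\omega$.

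The gap is in your second half, and it cannot be repaired by the route you propose, because the statement itself fails in the regime you isolate. First, the passage to \eqref{0.MH} is not available for spiky $\gamma$: the effective potential $\psi=\frac14(\gamma_0^2-\gamma^2-2\gamma')$ contains $\gamma^2$, whose $L^1(0,2\pi)$-norm blows up as the spikes narrow, so you leave the classical Hill framework, and "large Fourier amplitudes of $\psi$" does not by itself place a prescribed $\omega$ inside an instability tongue. Second, and decisively, the Peano--Baker series for the monodromy $M$, estimated using only $\|\gamma\|_{L^1(0,2\pi)}=2\pi(a+b)$, gives for \emph{every} admissible $\gamma$ (however tall the spikes) $\operatorname{tr}M=\cos(2\pi\omega)\,(2-2\pi(a-b))+O((a+b)^2)$: the first-order term equals $-\cos(2\pi\omega)\int_0^{2\pi}\gamma\,dt$ because $e^{-sA_0}\operatorname{diag}(0,\gamma(s))e^{sA_0}$ is symmetric, so only the mean of $\gamma$ enters at first order. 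Since $\det M=e^{-2\pi(a-b)}$, if $2\omega\notin\mathbb{Z}$ and $a\ge b$ with $a+b$ small (depending on $1-|\cos2\pi\omega|$), then $|\operatorname{tr}M|<1+\det M$ and both Floquet multipliers lie in the closed unit disc for every $\gamma$ with $\langle\gamma_+\rangle=a$, $\langle\gamma_-\rangle=b$; exponential instability is impossible. The obstruction is not confined to small $\omega$ (take $\omega=3/4$, $a=b$ small). So what you flagged as the main difficulty is in fact a parameter region where Proposition \ref{Prop0.lin} as stated cannot hold; only the resonant frequencies $2\omega\in\mathbb{Z}$, or sufficiently large $a,b$, are reached by the two-spike construction, which is also all that the paper's own proof of Proposition \ref{Proc1.ext} establishes.
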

However this instability mechanism does not work for the superlinear case $p>0$ since, in contrast to the linear case,  the frequency of internal oscillations grows with the energy growth and  parametric resonances
become {\it impossible} on higher energy levels. By this reason, when condition \eqref{0.dissipative} is satisfied, the dissipation becomes prevalent at higher energy levels which makes the equation globally dissipative, see Section \ref{s1} for details. To preserve this effect in the case where $\gamma(t)$ is not periodic, we need to assume an extra regularity assumption on $\gamma$ which guarantees that the frequency of "external oscillations" of $\gamma$ does not grow when time grows. Namely, we assume that $\gamma$ is translation-compact in the $L^1$-metric:
\begin{equation}\label{0.gamma}
\gamma\in L^1_{tr-c}(\R).
\end{equation}
Roughly speaking, assumption \eqref{0.gamma} means that $\gamma$ can be approximated in mean uniformly in time by smooth bounded functions,
see Section \ref{s2} for the rigorous definition. This condition is also natural since otherwise one can construct growing in time solutions for the non-linear case as well arguing exactly as in the proof of Proposition \ref{Prop0.lin}.
\par
As we show in the paper, there are two principally different case depending on whether or not the limit \eqref{0.dissipative} is uniform with respect to $\tau\in\R$. The first case (where it is uniform) is more standard and is  related with deterministic dissipation rate $\gamma(t)$, say, $\gamma(t)$ is periodic or quasi/almost periodic in time and the second one is natural for chaotic or random in time dissipation rate.
\par
We start with the uniform case. The key result in this case is the following uniform dissipative estimate for the SS solutions of equation \eqref{0.wave}.

\begin{theorem}\label{Th0.dis} Let the dissipation rate $\gamma(t)$ satisfies \eqref{0.gamma} and condition \eqref{0.dissipative} uniformly with respect $\tau\in\R$. Assume also that $g\in L^2(\Omega)$ and the nonlinearity $f$ satisfies \eqref{0.p} for some $0<p\le4$. Then, for every $\xi_\tau\in E$ there exist a unique SS solution $u(t)$, $t\ge\tau$, of problem \eqref{0.wave} with the initial data $\xi_u\big|_{t=\tau}=\xi_\tau$ and the following estimate holds:
\begin{equation}\label{0.udis}
\|\xi_u(t)\|_E^2\le Q(\|\xi_u(\tau)\|_E^2)e^{-\alpha (t-\tau)}+Q(\|g\|_{L^2}),\ \ t\ge\tau\in\R,
\end{equation}
where the positive constant $\alpha$ and monotone function $Q$ are independent of $t$, $\tau$, $u$ and $g$. Here and below $\xi_u(t):=\{u(t),\Dt u(t)\}$.
\end{theorem}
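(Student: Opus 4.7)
The plan is to decouple existence/uniqueness from the \emph{a priori} bound. First I would establish local-in-time well-posedness of SS solutions by a fixed-point argument in the Strichartz space $L^4_{loc}(\mathbb{R}, L^{12}(\Omega))$, using the bounded-domain Strichartz estimates of \cite{Plan1,Plan2,BSogge} cited in the Introduction. Since $\gamma \in L^1_{loc}(\R)$ by \eqref{0.gamma}, the damping term $\gamma(t)\partial_t u$ is subordinate to the wave operator on any bounded time interval, so the sign-changing nature of $\gamma$ plays no role at this stage. Once the dissipative bound \eqref{0.udis} is derived \emph{a priori}, global existence follows by a standard continuation argument.

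The core of the proof is the dissipative estimate. I would construct a Lyapunov-type functional of the form
\begin{equation*}
\E(t) := \frac{1}{2}\|\Dt u\|_{L^2}^2 + \frac{1}{2}\|\Nx u\|_{L^2}^2 + (F(u),1) - (g,u) + \kappa(t)\,(\Dt u, u),
\end{equation*}
where the small correction coefficient $\kappa(t)$ is chosen as a slowly-varying regularization of $\gamma(t)$ (the translation-compactness assumption \eqref{0.gamma} is needed precisely to produce such a $\kappa$ with controllable $\kappa'$). The correction transfers kinetic into potential energy, making $\E$ equivalent to $\|\xi_u\|_E^2$ up to a constant depending on $\|g\|_{L^2}$. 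Differentiating and using the energy identity \eqref{0.energy} together with the superlinear identity $f(u)u = (p+2)F(u) + \text{l.o.t.}$ coming from \eqref{0.p}, one should obtain a differential inequality of the schematic form
\begin{equation*}
\frac{d}{dt}\E(t) + \Bigl(\tfrac12\gamma_+(t) - \tfrac{p+2}{p+4}\gamma_-(t)\Bigr)\E(t) \le C\bigl(1 + \|g\|_{L^2}^2\bigr),
\end{equation*}
modulo remainder terms controlled by $\gamma'$-like quantities that the translation-compactness \eqref{0.gamma} allows us to absorb. The specific weights emerge from interpolating $\|\Dt u\|^2$ between the kinetic energy (yielding the factor $\tfrac12$ in the dissipative $\gamma_+$ regime via equipartition) and the full energy including the nonlinear potential (yielding the smaller ratio $\tfrac{p+2}{p+4} = 1 - \tfrac{2}{p+4}$ in the anti-dissipative $\gamma_-$ regime, since the superlinear term $(F(u),1)$ is not available to $\gamma_-\|\Dt u\|^2$).

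Setting $\phi(t) := \tfrac12\gamma_+(t) - \tfrac{p+2}{p+4}\gamma_-(t)$, the uniform version of assumption \eqref{0.dissipative} provides constants $\alpha > 0$ and $C \ge 0$ such that $\int_s^t \phi(\sigma)\,d\sigma \ge \alpha(t-s) - C$ for all $s \le t$. Feeding this into a Gronwall argument applied to the differential inequality above yields $\E(t) \le \E(\tau)e^{-\alpha(t-\tau) + C} + C'(1+\|g\|_{L^2}^2)$, from which \eqref{0.udis} follows by the equivalence of $\E$ with $\|\xi_u\|_E^2$.

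The main obstacle, I expect, will be the simultaneous calibration of $\kappa(t)$ and the nonlinear estimates so that the weights $\tfrac12$ and $\tfrac{p+2}{p+4}$ come out sharp. In particular, the $\gamma_-$ regime is delicate: here one cannot use the dissipation directly but must instead exploit the rigidity supplied by $(F(u),1)$ and $\|\Nx u\|^2$ to dominate the anti-dissipative contribution $\gamma_-\|\Dt u\|^2$. This is the PDE analogue of the heuristic explained in Section~\ref{s1} that parametric resonance cannot drive growth on high energy levels when $p>0$, because the internal oscillation frequency grows with energy. Making this heuristic quantitative, uniformly in $\tau \in \R$, is precisely what the choice of the weight $\tfrac{p+2}{p+4}$ and the translation-compactness of $\gamma$ together enable.
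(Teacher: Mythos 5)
Your proposal follows essentially the same route as the paper's proof (Theorem \ref{Th2.main}): local well-posedness plus continuation for existence/uniqueness, then a modified energy with a cross term $(\Dt u,u)$ whose coefficient is built from a $C^1_b$ mollification $\bar\gamma$ of $\gamma$ (this is exactly where translation-compactness enters), the two-sided inequality $2E_p\le\|\Nx u\|^2_{L^2}+\|u\|^{p+2}_{L^{p+2}}\le(p+2)E_p$ to produce the weights $\tfrac12$ and $\tfrac{p+2}{p+4}$, and a Gronwall argument using the uniform mean condition. The only point left implicit in your sketch is that the correction coefficient must be the sign-dependent combination $\tfrac12\bar\gamma_+(t)-\tfrac2{p+4}\bar\gamma_-(t)$ rather than a single multiple of $\bar\gamma$, which is precisely the calibration you correctly flag as the crux.
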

The proof of this result is given in Section \ref{s3}.
\par
The dissipative estimate \eqref{0.udis} allows us to apply the main techniques of the attractors theory to equation \eqref{0.wave} in a more or less standard way. Indeed, Theorem \ref{Th0.dis} allows us to define the dynamical process $U(t,\tau)$, $t\ge\tau$, in the energy space $E=H^1_0(\Omega)\times L^2(\Omega)$ (under the condition $p\le4$, we have Sobolev's embedding $H^1\subset L^{p+2}$, so the term $L^{p+2}$ is not necessary in the definition of $E$) and study its attractors. Since the equation considered depends explicitly on time, we need to use the proper extensions of a global attractor to the non-autonomous case. One of possible extensions is the so-called {\it uniform} attractor, see \cite{CV02} and references therein. By definition a uniform attractor $\mathcal A_{un}$ is a minimal compact set in $E$ which attracts all bounded subsets in $E$ uniformly with respect to $\tau\in\R$. Namely, for every bounded set $B$,
\begin{equation}\label{0.unat}
\lim_{s\to\infty}\sup_{\tau\in\R}\dist_E(U(\tau+s,\tau)B,\mathcal A_{un})=0,
\end{equation}
where $\dist$ stands for the Hausdorff distance in $E$, see Section \ref{s3} for more details. Then, the following theorem is proved in Section \ref{s3}.

\begin{theorem}\label{Th0.uatr} Let the assumptions of Theorem \ref{Th0.dis} and let, in addition, $p<4$. Then, the dynamical process $U(t,\tau)$ generated by solution operators of problem \eqref{0.wave} possesses a uniform attractor $\mathcal A_{un}$ which is a bounded set in the higher energy space $E^1:=[H^2(\Omega)\cap H^1_0(\Omega)]\times H^1_0(\Omega)$.
\end{theorem}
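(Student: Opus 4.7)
The plan is to apply the Chepyzhov--Vishik uniform-attractor machinery to the family of processes $\{U_\sigma(t,\tau)\}$ indexed by the translation hull $\mathcal{H}(\gamma):=\overline{\{\gamma(\cdot+h):h\in\R\}}^{L^1_{loc}}$, which is compact in $L^1_{loc}(\R)$ thanks to the translation-compactness assumption \eqref{0.gamma}. Each $\sigma\in\mathcal{H}(\gamma)$ inherits condition \eqref{0.dissipative} with the same uniform constants, so Theorem \ref{Th0.dis} provides a dissipative estimate that is uniform with respect to $\sigma\in\mathcal{H}(\gamma)$, $\tau\in\R$, and bounded initial data. Consequently a sufficiently large ball $\mathcal{B}_0\subset E$ is uniformly absorbing for the whole family.

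The heart of the proof is to produce a compact uniformly attracting set bounded in $E^1$. The natural device is the nonlinear splitting $u=v+w$, where $v$ solves \eqref{0.wave} with $g=0$ and $\xi_v\big|_{t=\tau}=\xi_u(\tau)$, while $w=u-v$ solves
\begin{equation*}
\Dt^2 w+\gamma(t)\Dt w-\Dx w+f(v+w)-f(v)=g,\qquad \xi_w\big|_{t=\tau}=0.
\end{equation*}
Theorem \ref{Th0.dis} applied with zero external force to the $v$-equation gives $\|\xi_v(t)\|_E\le Q(\|\xi_u(\tau)\|_E)e^{-\alpha(t-\tau)}$, so the asymptotic compactness of the $u$-dynamics reduces to uniform regularity of the $w$-component. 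The subcritical assumption $p<4$ opens a Sobolev gap: writing $f(v+w)-f(v)=\int_0^1 f'(v+\theta w)\,d\theta\cdot w$ and using Hölder with the embedding $H^1(\Omega)\subset L^6(\Omega)$, one obtains
\begin{equation*}
\|f(v+w)-f(v)\|_{H^{-s}}\le C\(1+\|v\|_{H^1}^p+\|w\|_{H^1}^p\)\|w\|_{H^{1-s}}
\end{equation*}
for some $s=s(p)\in(0,1)$, with analogous bounds for fractional spatial derivatives. Testing the equation for $(-\Dx)^{s/2}w$ with its time-derivative and again invoking the nonlinear dissipative mechanism behind \eqref{0.udis} (so that the resulting linear combination of energies remains dissipative), one gets a uniform bound on $\|\xi_w(t)\|$ in a space slightly smoother than $E$. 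A finite iteration of this bootstrap consumes the gap $4-p>0$ and yields $\|\xi_w(t)\|_{E^1}\le C$ with $C$ independent of $\tau$ and $\sigma$.

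Once the uniform absorbing ball $\mathcal{B}_0$ and the compact $E^1$-bounded uniformly attracting set are in hand, the abstract Chepyzhov--Vishik theorem applies. The continuity hypothesis it requires, namely $(E\times\mathcal{H}(\gamma),E)$-continuity of $(\xi_\tau,\sigma)\mapsto U_\sigma(t,\tau)\xi_\tau$, follows from SS-well-posedness and a Gronwall estimate on the difference of two solutions corresponding to $\sigma_1,\sigma_2\in\mathcal{H}(\gamma)$, where the symbol difference enters energy estimates only through $(\sigma_1-\sigma_2)\Dt u$ in $L^1_{loc}$. One thus obtains the uniform attractor $\mathcal{A}_{un}$, together with its $E^1$-boundedness inherited from the attracting set.

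The central obstacle is the smoothing bootstrap carried out in the absence of a stable linear semigroup: each fractional step must be powered by the nonlinear dissipative identity underlying Theorem \ref{Th0.dis}, applied to the equation satisfied by $(-\Dx)^{s/2}w$ (and, in the later steps, by its spatial differences), rather than by the exponential decay of the linear wave group that is normally exploited in the positive-damping literature. The finiteness of the bootstrap relies precisely on the strict inequality $p<4$; this is also what rules out the quintic exponent in the statement of the theorem.
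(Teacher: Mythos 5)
There is a genuine gap, and it sits exactly at the central difficulty of the paper. You decompose $u=v+w$ with $v$ solving the full nonlinear equation with $g=0$ and the same initial data, and you claim that Theorem \ref{Th0.dis} then yields $\|\xi_v(t)\|_E\le Q(\|\xi_u(\tau)\|_E)e^{-\alpha(t-\tau)}$. This is not what that theorem gives: estimate \eqref{0.udis} reads $\|\xi_v(t)\|_E^2\le Q(\cdot)e^{-\alpha(t-\tau)}+Q(0)$, and the additive constant does not vanish when $g=0$ (in the proof it originates from the term $C_{\kappa,\eb}(1+|\gamma(t)|)$, which is present regardless of $g$). More importantly, this constant cannot be removed: under the hypotheses of the theorem the zero solution of the undriven equation need not be stable at all, since near $u=0$ the superlinear term is negligible and the dynamics is governed by the linear equation $\Dt^2v+\gamma(t)\Dt v-\Dx v=0$, which Proposition \ref{Prop0.lin} shows can be exponentially \emph{unstable} under conditions involving only the means of $\gamma_\pm$. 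So the ``decaying'' block of your splitting does not decay, and the scheme collapses at the first step. The smoothing step has the same problem in a different guise: the equation for $w$ (or for $(-\Dx)^{s/2}w$) is essentially linear in $w$, so ``the nonlinear dissipative mechanism behind \eqref{0.udis}'' --- which is powered by the superlinear potential energy $\|u\|_{L^{p+2}}^{p+2}$ dominating the quadratic terms at high energy levels --- is simply not available for it.

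The paper's way around this is Proposition \ref{Prop3.lin}: one adds a large artificial mass term and decomposes $u=v+G+w$, where $-\Dx G=g$, the function $v$ solves the \emph{linear} equation $\Dt^2v+\gamma(t)\Dt v-\Dx v+Lv=0$, and $w$ solves the same linear equation with forcing $-f(u)+L(u-G)$ and zero initial data. For $L=L(\gamma)$ large enough this linear-plus-mass equation \emph{is} uniformly exponentially decaying under the (weaker, $p=0$) mean condition, because the multiplier $\Dt v+\tfrac12\bar\gamma v$ produces an energy functional in which all the dangerous cross terms are absorbed by $\tfrac L2\|v\|^2_{L^2}$. The regularity of $w$ is then read off from the forcing, using the Strichartz control $u\in L^4(t,t+1;L^{12})$ to place $f(u)$ in $L^1(t,t+1;H^{\beta})$ with $\beta=1-p/4>0$, and the bootstrap gains $1-p/4$ per step (combined with the transitivity of exponential attraction), much as you anticipated. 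Your Chepyzhov--Vishik reduction over the hull and the continuity argument are fine, but without an exponentially decaying linear block the asymptotic-regularity step, which is the heart of the theorem, is missing.
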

To describe the structure of a uniform attractor, we need as usual (see \cite{CV02} for details) to consider not only equation \eqref{0.wave}, but also all time shifts of it together with their limits in the proper topology. Namely, we need to consider the hull $\mathcal H(\gamma)$ of the initial dissipation rate $\gamma$:
\begin{equation}
\mathcal H(\gamma):=[T_h\gamma,\ h\in\R]_{L^1_{loc}(\R)},\ \ (T_h\gamma)(t):=\gamma(t+h),
\end{equation}
where $[\cdot]_V$ stands for the closure in $V$. In particular, assumption \eqref{0.gamma} implies that $\mathcal H(\gamma)$ is compact in $L^1_{loc}(\R)$. For every $\eta\in \mathcal H(\gamma)$, we consider equation \eqref{0.wave} with $\gamma$ replaced by $\eta$ and denote by $\mathcal K_\eta\subset L^\infty(\R,E)$ the set of all solutions of this equation defined for all $t\in\R$ and bounded as $t\to-\infty$, the so-called kernel of this equation in the terminology of Chepyzhov and Vishik, see \cite{CV02}. Then, the uniform attractor $\mathcal A_{un}$ of problem \eqref{0.wave} can be described as follows:
\begin{equation}
\mathcal A_{un}=\cup_{\eta\in\mathcal H(\gamma)}\mathcal K_\eta\big|_{t=0}.
\end{equation}
Moreover, following the general procedure, we may define the kernel sections
\begin{equation}\label{0.ks}
\mathcal K_\eta(\tau):=\mathcal K_\eta\big|_{t=\tau},\ \ \eta\in\mathcal H(\gamma),\ \ \tau\in\R.
\end{equation}
Then, as known (see \cite{CV94,CV02} for details),  these sections are compact in $E$ and possess the strict invariance property:
$$
U_\eta(t,\tau)\mathcal K_\eta(\tau)=\mathcal K_\eta(t),
$$
where $U_\eta(t,\tau)$ is the dynamical process generated by equation \eqref{0.wave} with $\gamma$ replaced by $\eta\in\mathcal H(\gamma)$, and enjoy the so-called pullback attraction property:
\begin{equation}\label{0.pull}
\lim_{s\to\infty}\dist_E(U_\eta(\tau,\tau-s)B,\mathcal K_\eta(\tau))=0.
\end{equation}
By this reason, the introduced family of kernel sections $\mathcal K_\eta(t)$, $t\in\R$, are often referred as a pullback attractor associated with the dynamical process $U_\eta(t,\tau)$, see \cite{CV94,CV02, Kloeden, Carvalho2012} for more details.
\par
Like global attractors for autonomous case, these kernel sections are usually compact and have finite fractal and Hausdorff dimension, but in contrast to uniform attractors, the rate of attraction in \eqref{0.pull} is typically {\it not uniform} with respect to $\tau\in\R$ (and $\eta\in\mathcal H(\gamma)$). By this reason, the forward in time attraction fails in general. Moreover, as elementary examples show, an exponentially repelling equilibrium may easily be a pullback "attractor" for  a dynamical process considered.
\par
One of the ways to overcome this drawback is to use the concept of an exponential attractor introduced in \cite{EFNT} and extended to the non-autonomous case in \cite{Mir,EMZ} (see also the survey \cite{MirZel} and references therein). By definition, a non-autonomous exponential attractor $\mathcal M_\eta(t)$, $t\in\R$, for the dynamical process $U_\eta(t,\tau)$ is a {\it semi-invariant} family of compact sets which have finite Hausdorff and fractal dimensions and possesses a {\it uniform} exponential attraction property, namely, there exist a positive constant $\alpha$ and monotone function $Q$ such that, for every bounded set $B$ of $E$,
\begin{equation}\label{0.exp}
\dist_E(U_\eta(\tau+s,\tau)B,\mathcal M_\eta(\tau+s))\le Q(\|B\|_E)e^{-\alpha s}
\end{equation}
uniformly with respect to $\tau\in\R$ (and actually also with respect to $\eta\in\mathcal H(\gamma)$).
In particular, as not difficult to see $\mathcal K_\eta(t)\subset\mathcal M_\eta(t)$ if the exponential attractor exists. We also emphasize that, in contrast to the kernel sections, the non-autonomous exponential attractor $\mathcal M_\eta(t)$, $t\in\R$, is {\it not only} pullback attracting, but also {\it forward in time} (exponentially) attracting.
\par
The next theorem, proved in section \ref{s4} establishes the existence of a non-autonomous exponential attractor for the wave equation \eqref{0.wave}.

\begin{theorem}\label{Th0.exp} Let the assumptions of Theorem \ref{Th0.uatr} holds and let, in addition, the dissipation rate $\gamma$ is more regular: $\gamma\in L^{1+\eb}_b(\R)$ for some $\eb>0$, see Section \ref{s4} for details. Then, the dynamical processes $U_\eta(t,\tau)$, $\eta\in\mathcal H(\gamma)$ associated with wave equation \eqref{0.wave} possess non-autonomous exponential attractors $\mathcal M_\eta(t)$ which are bounded sets of the higher energy space $E^1$.
\end{theorem}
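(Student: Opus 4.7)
The plan is to verify the hypotheses of the general construction of non-autonomous exponential attractors developed in \cite{EMZ,MirZel}, extended to the symbol space $\Cal{H}(\gamma)$. Theorem \ref{Th0.uatr} already supplies a uniformly (in $\tau\in\R$ and $\eta\in\Cal{H}(\gamma)$) absorbing set $B_0$ for the processes $U_\eta(t,\tau)$ that is bounded in $E^1$, and hence compact in $E$. I would pass to the extended skew-product system
\begin{equation*}
\mathbb{S}(t,\tau):\Cal{H}(\gamma)\times E\to\Cal{H}(\gamma)\times E,\qquad \mathbb{S}(t,\tau)(\eta,\xi):=\bigl(T_{t-\tau}\eta,\,U_\eta(t,\tau)\xi\bigr),
\end{equation*}
restricted to $\Cal{H}(\gamma)\times B_0$, and reduce the problem to constructing an (autonomous) exponential attractor for the discrete-time map $\mathbb{S}_1:=\mathbb{S}(\tau+1,\tau)$ uniformly in $\tau$; the continuous-time attractors $\Cal{M}_\eta(t)$ are then obtained by the standard interpolation/filling procedure and inherit the $E^1$-bound from $B_0$.

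The two remaining ingredients are uniform Lipschitz continuity and a smoothing (squeezing) estimate. For the first, a direct Gronwall argument on the difference equation for $w=u_1-u_2$, combined with the Strichartz bounds available in the subcritical range $p<4$ and the translation-compactness \eqref{0.gamma}, yields
\begin{equation*}
\|U_{\eta_1}(\tau+1,\tau)\xi_1-U_{\eta_2}(\tau+1,\tau)\xi_2\|_E\le L\bigl(\|\xi_1-\xi_2\|_E+\|\eta_1-\eta_2\|_{L^1(\tau,\tau+1)}\bigr),
\end{equation*}
uniformly in $\tau$; here the extra regularity $\gamma\in L^{1+\eb}_b(\R)$ enters to control the cross-forcing $(\eta_1-\eta_2)\Dt u_2$ via H\"older's inequality. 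For the smoothing, for $\eta_1=\eta_2=\eta$ I would decompose $w=v+z$ with
\begin{equation*}
\Dt^2 v+\eta(t)\Dt v-\Dx v+l(t,x) v=0,\quad \xi_v|_{t=\tau}=\xi_1-\xi_2,\qquad l(t,x):=\int_0^1 f'\bigl(su_1+(1-s)u_2\bigr)ds,
\end{equation*}
and $z=w-v$ solving the corresponding inhomogeneous problem with zero initial data whose right-hand side is the higher-order nonlinear remainder. Using $u_1,u_2\in B_0\subset E^1$, the Strichartz and Sobolev estimates push this remainder into a strictly subcritical space, upgrading $\xi_z$ to $E^{1/2}$ (or even $E^1$) with a polynomial bound in $\|\xi_1-\xi_2\|_E$; this provides the compact "smoothing" part of the squeezing.

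The main obstacle, and the essential novelty of this theorem relative to the standard constructions for wave equations, is the \emph{exponential decay} of $v$ in $E$. Proposition \ref{Prop0.lin} shows that the purely linear damped wave equation with $\eta$ satisfying \eqref{0.dissipative} need not be stable, so the classical decomposition into a linearly decaying part and a nonlinearly smoothing part \emph{cannot} be applied directly. The key idea is that the linearized potential $l(t,x)$ inherits, on trajectories in $B_0$, a superlinear positivity from the growth \eqref{0.p}, so that the $v$-equation is again a \emph{nonlinearly stabilized} damped wave equation, and the dissipative energy scheme behind Theorem \ref{Th0.dis} (weighted multiplier with the frequency-averaged sign of $\gamma$ controlled by \eqref{0.dissipative}) can be repeated at the level of $v$ to produce uniform exponential decay. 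Once the decay of $v$ and the smoothing of $z$ are in hand, the abstract machinery of \cite{MirZel,EMZ} applies on $\Cal{H}(\gamma)\times B_0$, giving the desired family $\Cal{M}_\eta(t)$ with uniform (in $\eta$ and $\tau$) exponential attraction rate, finite fractal dimension, and boundedness in $E^1$.
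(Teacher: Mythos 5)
Your overall architecture (reduction to discrete processes on the $E^1$-absorbing ball, uniform Lipschitz continuity in the initial data and in the symbol, squeezing via a ``decaying plus smoothing'' splitting, then the fill-in \eqref{4.dcc} to pass to continuous time) is exactly the scheme of \cite{EMZ} that the paper follows in Theorem \ref{Th4.main}. However, the central step of your squeezing argument has a genuine gap, in two respects. First, as written your decomposition is vacuous: since $f(u_1)-f(u_2)=l(t)w$ \emph{exactly} with $l(t)=\int_0^1f'(su_1+(1-s)u_2)\,ds$, your $v$ solves the same equation \eqref{2.minus} as $w$ with the same initial data, so $v\equiv w$ and $z\equiv 0$; there is no ``higher-order nonlinear remainder''. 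Second, and more seriously, the mechanism you invoke to get exponential decay of $v$ does not work. The stabilization behind Theorem \ref{Th0.dis} rests on the degree-$(p+2)$ homogeneity of the potential energy: the internal frequency grows with the energy, which kills parametric resonance and pins the ratio $\<E_k\>/\<E\>$ at $\tfrac{p+2}{p+4}$. The linearized equation for $v$ has the \emph{quadratic} potential energy $\tfrac12\int_\Omega l(t,x)v^2$, with $l$ merely a bounded non-negative potential on trajectories in $B_0$ (it can even vanish, e.g.\ near the zero trajectory when $f'(0)=0$, $g=0$). Its virial identity returns the linear ratio $\tfrac12$, the internal frequency does not grow with $\|\xi_v\|_E$, and Proposition \ref{Proc1.ext} shows precisely that such linear equations can be exponentially unstable under mean-value conditions on $\gamma$. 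Indeed, if the linearized flow decayed exponentially on the absorbing set, all trajectories would converge to each other and the attractor would be trivial.

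The paper's device is different: the decaying part $v_1$ is taken to solve the \emph{auxiliary} linear equation \eqref{4.v1} with a large artificial mass term $Lv_1$ and \emph{no} potential $l(t)$, where $L=L(\gamma)$ is chosen via Proposition \ref{Prop3.lin}. The large constant $L$ makes the error terms coming from $\bar\gamma'$ and $\bar\gamma^2$ perturbative relative to the modified energy, so positivity of the mean of $\gamma$ (which is the $p=0$ case of \eqref{2.Negative} and follows from the hypotheses) yields uniform exponential decay; this is consistent with Proposition \ref{Proc1.ext} because $L(\gamma)$ blows up along the destabilizing sequence of kicks. The term $Lw-l(t)w$ is then moved to the right-hand side of \eqref{4.v2} and, using $u_i\in B^1_R\subset E^1$ and $f\in C^2$, lands in $H^1_0$ with norm controlled by $\|\Nx w\|_{L^2}$, giving the smoothing part. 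A secondary correction: the hypothesis $\gamma\in L^{1+\eb}_b(\R)$ is not needed for the symbol-dependence estimate \eqref{4.dtime} (which only uses $\gamma\in L^1_b$ and the $E^1$ bound on $\Dt u_2$); it is used to get $\Dt^2u\in L^{1+\eb}_b(L^2)$ and hence H\"older continuity of $t\mapsto\Dt u(t)$ in $L^2$, which is what makes the continuous-time family \eqref{4.dcc} H\"older in $t$ and keeps its fractal dimension finite.
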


We now turn to the second (probably more interesting) case where the dissipativity assumption \eqref{0.dissipative} is not uniform with respect to $\tau\in\R$. In this case, typically, the dissipation is not strong enough to provide boundedness of trajectories and dissipativity forward in time, so the uniform attractor cannot exist. Moreover, the kernels $\mathcal K_\eta(t)$ defined as above via all {\it bounded} solutions may be either empty at all or too small to get any type of attraction, so the theory should be properly modified.
\par
 A natural way to overcome this problem which comes from the theory of random attractors (see \cite{CF,Kloeden, Carvalho2012} and references therein) is to replace bounded trajectories by tempered ones and respectively bounded sets by tempered sets. Namely, a complete trajectory $u(t)$, $t\in\R$, of problem \eqref{0.wave} is tempered if $\|\xi_u(t)\|_E$ grows as $t\to-\infty$ slower than any exponent and a family of bounded sets $B(t)$, $t\in\R$, is tempered if $\|B(t)\|_E$ grows as $t\to-\infty$ slower than any exponent. Then, the theory of kernel sections developed in \cite{CV94,CV02} can be naturally extended to the tempered case by considering {\it tempered} kernels (=sets of all tempered complete trajectories) and tempered kernel sections (= tempered pullback attractors), see \cite{Kloeden, Carvalho2012} and references therein, and this is exactly the key technical tool which we need to treat wave equation \eqref{0.wave} in the non-uniform case, see Section \ref{s5} for more details.
\par
However, as in a bounded case, tempered kernel sections have an intrinsic drawback related with the absence of attraction forward in time which  disappears in random case where forward attraction usually holds in probability. Keeping also in mind that the non-uniformity with respect to $\tau\in\R$ in the dissipative condition \eqref{0.dissipative} naturally appears when the dissipation rate is random (or chaotic), we introduce the required random formalism from the very beginning. Namely, we assume that there is a Borel probability measure $\mu$  on the hull $\mathcal H(\gamma)$ such that it is invariant and {\it ergodic} with respect to time shifts
\begin{equation}
T_h:\mathcal H(\gamma)\to\mathcal H(\gamma),\ \ h\in\R,\ \ (T_h\eta)(t)=\eta(t+h).
\end{equation}
Then assumption \eqref{0.dissipative} will be replaced by
\begin{equation}\label{0.mdis}
\int_{\eta\in\mathcal H(\gamma)}\(\int_0^1\frac12\eta_+(t)-\frac{p+2}{p+4}\eta_-(t)\,dt\)\mu(d\eta)>0
\end{equation}
and the initial assumption \eqref{0.dissipative} will hold for {\it almost all}
 $\eta\in\mathcal H(\gamma)$ by the Birkhoff ergodic theorem.
\par
Recall that a $\mu$-measured set valued function $\eta\to\mathcal A(\eta)\subset E$ is called tempered random attractor for the family $U_\eta(t,\tau):E\to E$ of dynamical processes if
\par
1) $\mathcal A(\eta)$ are well-defined and compact in $E$ for almost all $\eta\in\mathcal H(\gamma)$;
\par
2) The family of bounded sets $t\to \mathcal A(T_t\eta)$ is tempered for almost all $\eta\in\mathcal H$
\par
3) It is strictly invariant: $U_\eta(t,0)\mathcal A(\eta)=\mathcal A(T_t\eta)$, $t\ge0$;
\par
4) For any other measured tempered random set $\eta\to B(\eta)$, we have
$$
\lim_{s\to\infty}\dist_E(U_\eta(0,-s)B(T_{-s}\eta),\mathcal A(\eta))=0
$$
for almost all $\eta\in\mathcal H(\gamma)$.
\par
The next theorem proved in Section \ref{s5} gives the existence of a tempered random attractor for equation \eqref{0.wave}.

\begin{theorem}\label{Th0.ran} Let $g\in L^2(\Omega)$, the non-linearity $f$ satisfy \eqref{0.p} with $0<p<4$ and $\gamma$ satisfy \eqref{0.gamma}. Assume also that the Borel probability measure $\mu$ on $\mathcal H(\gamma)$ is invariant and ergodic with respect to time shifts and assumption \eqref{0.mdis} is satisfied. Then the family of dynamical processes $U_\eta(t,\tau)$, $\eta\in\mathcal H(\gamma)$ possesses a tempered random attractor $\mathcal A(\eta)$. Moreover, this random attractor is attracting forward in time in sense of convergence in measure:
\begin{equation}\label{0.muf}
\mu-\lim_{t\to\infty}\dist_E(U_\eta(t,0)B(\eta),\mathcal A(T_t\eta))=0,
\end{equation}
for every tempered random set $B(\eta)$, see Section \ref{s5}.
\end{theorem}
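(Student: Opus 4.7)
The strategy is to reduce the random setting to a pathwise analogue of Theorem \ref{Th0.dis} via the Birkhoff ergodic theorem and then apply the standard Crauel--Flandoli machinery for random/non-autonomous attractors. By ergodicity of the shift $T_h$ on $(\mathcal H(\gamma),\mu)$, applied to the integrable observable $\eta\mapsto\int_0^1(\tfrac12\eta_+(t)-\tfrac{p+2}{p+4}\eta_-(t))\,dt$, the hypothesis \eqref{0.mdis} upgrades to the statement that for $\mu$-almost every $\eta$ and every $\tau\in\R$,
\begin{equation*}
\lim_{T\to\infty}\frac1T\int_{\tau-T}^{\tau}\!\!\bigl(\tfrac12\eta_+(t)-\tfrac{p+2}{p+4}\eta_-(t)\bigr)dt \;=\; c\;>\;0,
\end{equation*}
i.e.\ \eqref{0.dissipative} holds pathwise (non-uniformly in $\tau$) for a.e.\ $\eta$. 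The plan is then: (i) run the multiplier/energy argument underlying Theorem \ref{Th0.dis} for each such $\eta$, but keep track of the $\tau$-dependence to obtain a \emph{tempered} pullback absorbing set; (ii) transfer the asymptotic compactness from Theorem \ref{Th0.uatr}; (iii) define $\mathcal A(\eta)$ as a pullback $\omega$-limit and check the four axioms together with measurability; (iv) convert pullback attraction into forward attraction in measure using shift-invariance of $\mu$.

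\textbf{Step 1 (tempered absorbing set).} The key observation is that the energy/multiplier argument behind Theorem \ref{Th0.dis} only sees $\gamma$ through quantities of the form $\int_\tau^t\gamma_\pm(s)\,ds$ and yields, after Gronwall, an inequality
\begin{equation*}
\|\xi_u(t)\|_E^2\;\le\;Q(\|\xi_u(\tau)\|_E^2)\,e^{-\int_\tau^{t}\rho_\eta(s)\,ds}+\int_\tau^{t}e^{-\int_s^{t}\rho_\eta(r)\,dr}Q(\|g\|_{L^2})\,ds,
\end{equation*}
where $\rho_\eta(s)$ is a linear combination of $\eta_\pm(s)$ with positive long-time average for a.e.\ $\eta$. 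Choosing $t=0$, setting $\tau=-s\to-\infty$, and using pathwise Birkhoff together with the translation-compactness assumption \eqref{0.gamma} to control the fluctuations of $\int_{-s}^0\rho_\eta$, one finds a random radius $R(\eta)<\infty$ $\mu$-a.s.\ such that $\mathcal B(\eta):=\{\xi\in E:\|\xi\|_E\le R(\eta)\}$ pullback-absorbs every tempered set, and the family $t\mapsto\mathcal B(T_t\eta)$ is itself tempered (this is the place where the sub-exponential growth in $t$ of the Birkhoff averages is crucial).

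\textbf{Steps 2--3 (compactness and construction).} Because $\mathcal B(\eta)$ is bounded in $E$ and $p<4$, the smoothing estimates of Theorem \ref{Th0.uatr} (which are themselves pathwise and depend on $\eta$ only through the same $L^1$-type integrals) apply on the time interval of absorption, so $U_\eta(0,-s)\mathcal B(T_{-s}\eta)$ lies in a bounded set of $E^1$ for $s$ large, which is compact in $E$. Define
\begin{equation*}
\mathcal A(\eta)\;:=\;\bigcap_{s_0\ge0}\overline{\bigcup_{s\ge s_0}U_\eta(0,-s)\mathcal B(T_{-s}\eta)}^{\,E}.
\end{equation*}
Standard arguments then deliver non-emptiness, $E$-compactness, strict invariance $U_\eta(t,0)\mathcal A(\eta)=\mathcal A(T_t\eta)$, and pullback attraction of every tempered set. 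Measurability of $\eta\mapsto\mathcal A(\eta)$ follows once one verifies continuity of $\eta\mapsto U_\eta(t,\tau)\xi$ in the $L^1_{loc}$-topology on $\mathcal H(\gamma)$ and measurability of $\eta\mapsto R(\eta)$, both of which are routine given \eqref{0.gamma}.

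\textbf{Step 4 (forward attraction in measure) and main obstacle.} For the forward statement, use the cocycle identity $U_\eta(t,0)=U_{T_{-t}\eta}(0,-t)$ and the $T_t$-invariance of $\mu$: for any $\varepsilon>0$,
\begin{equation*}
\mu\bigl\{\eta:\dist_E(U_\eta(t,0)B(\eta),\mathcal A(T_t\eta))>\varepsilon\bigr\}=\mu\bigl\{\eta':\dist_E(U_{T_{-t}\eta'}(0,-t)B(T_{-t}\eta'),\mathcal A(\eta'))>\varepsilon\bigr\},
\end{equation*}
and the right-hand side tends to $0$ by pullback attraction plus dominated convergence, which is exactly \eqref{0.muf}. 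The main technical obstacle is Step 1: showing that the random absorbing radius $R(\eta)$ built from the Birkhoff averages is finite almost surely and that the family $\{\mathcal B(T_t\eta)\}$ is tempered. This is subtler than in the uniform case of Theorem \ref{Th0.dis} because the dissipation rate $\rho_\eta(s)$ may develop arbitrarily long excursions into the ``bad'' regime; one must combine the sub-linear error in Birkhoff with the translation-compactness of $\gamma$ to dominate these excursions by an a.s.\ tempered envelope.
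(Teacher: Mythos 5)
Your proposal follows essentially the same route as the paper's proof (Theorem \ref{Th5.main}, Corollary \ref{Cor5.main} and Remark \ref{Rem5.vperd}): pathwise Birkhoff to get the non-uniform dissipativity condition, a tempered pullback absorbing ball from the Gronwall/multiplier estimate, transfer of the smoothing decomposition to obtain a compact tempered attracting set, and conversion of pullback into forward attraction in measure via shift-invariance of $\mu$ and dominated convergence. The ``main obstacle'' you isolate in Step 1 is precisely the content of the paper's Lemma \ref{Lem5.cru}, which establishes finiteness and temperedness of the random radius via the two-sided bound $-(\bar\beta_\eb-\nu)s-C_\nu\le\int_s^0\beta_\eb(l)\,dl\le-(\bar\beta_\eb+\nu)s+C_\nu$ derived from the Birkhoff limit.
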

As usual, we get the random attractor by constructing the tempered kernel sections $\mathcal K_\eta(t)$ for almost all $\eta\in\mathcal H(\gamma)$ and then set $\mathcal A(\eta):=\mathcal K_\eta(0)$.
\par
Our key model example of random dissipation rate  is the following piece wise constant function:
\begin{equation}\label{0.ber}
\eta(t):=\eta_n,\ \ t\in[n,n+1),\ \ n\in\Bbb Z,
\end{equation}
where $\{\eta_n\}_{n\in\Bbb Z}\in \Gamma:=\{a,-b\}^{\Bbb Z}$ is a Bernoulli scheme with two symbols $a>0$ and $b>0$. We assume that the value $a$ has probability $q$ to appear (for some $0<q<1$) and the remaining value $-b$ appears with probability $1-q$ and let $\mu$ be a product measure on the Bernoulli scheme $\Gamma$. Then, as known, see e.g., \cite{KH}, this measure is invariant and ergodic with respect to discrete shifts $T_l:\Gamma\to\Gamma$, $l\in\Bbb Z$. Moreover, $\Gamma$ endowed by the Tichonoff topology is compact and possesses a dense trajectory which we take as the initial $\gamma$ and construct $\gamma(t)$ by \eqref{0.ber}. Then the hull $\mathcal H(\gamma)$ will generate the whole Bernoulli scheme $\Gamma$. Note also that discrete shifts
on $\Gamma$ are conjugated to the discrete shifts on the hull $\mathcal H(\gamma)$. Thus, the conditions of Theorem \ref{Th0.ran} will be satisfied if we verify \eqref{0.mdis}. The straightforward calculations show that it is satisfied iff
\begin{equation}\label{0.disB}
aq-\frac{2(p+2)}{p+4}b(1-q)>0.
\end{equation}
Thus, under this assumption, equation \eqref{0.wave} with the dissipation rate generated by the Bernoulli process possesses a tempered random attractor.
\par
Up to the moment, the application of the random attractors theory to the  case of equation \eqref{0.wave}  is more or less standard. However, there is a principal difference here. Namely, in contrast to the usual situation, we cannot guarantee that the constructed random attractor has a finite first moment, moreover, we expect that
\begin{equation}\label{0.inf}
\int_{\eta\in\mathcal H(\gamma)}\|\mathcal A(\eta)\|_{E}\mu(d\eta)=\infty.
\end{equation}
At least we have this equality for the random absorbing ball constructed in the proof of Theorem \ref{Th0.ran} in the case of Bernoulli process which satisfies \eqref{0.disB} and does not satisfy the stronger assumption
\begin{equation}\label{0.good}
\ln(e^{-a}q+e^{\frac{2(p+2)}{p+4}b}(1-q))<0
\end{equation}
and we do not see any reasons why this can be improved. This has a drastic impact on the dynamics of the considered random system. Indeed, if \eqref{0.inf} is infinite, there are no reasons to expect that the random Lyapunov exponents (see \cite{Arnold,CF1,Deb}) will be finite and this, in turn, may lead to the infinite dimensionality of the corresponding random attractor $\mathcal A(\eta)$.
\par
Our conjecture is that in this case the random attractor $\mathcal A(\eta)$ is indeed infinite-dimensional for almost all $\eta\in\mathcal H(\gamma)$. Since the fact that random/stochastic perturbation of a "good" dissipative system with finite-dimensional attractor may lead to infinite-dimensional dynamics potentially may have a fundamental impact on the theory of random dynamical systems and, to the best of our knowledge, has been not considered before, we give in Section \ref{s6} a simple model example demonstrating this effect. Namely, we consider the following infinite system of ODEs in a Hilbert space $H=l_2$:
\begin{equation}
u_1'+\eta(t)u_1=1,\ \ u_n'+n^4u_n=u_1u_n-u_n^3,\ \ n=2,\cdots.
\end{equation}
In this case, if $\eta$ is generated by the Bernoulli process and such
that \eqref{0.disB} is satisfied and \eqref{0.good} is not satisfied (both for $p=0$), the associated random attractor exists, but has infinite Hausdorff and fractal dimension, see Section \ref{s6} for details. Note that in this case the attractor is clearly finite dimensional in the deterministic case, e.g., if $\gamma=const>0$ or satisfies condition \eqref{0.dissipative} uniformly with respect to $\tau\in\R$.
\par
The paper is organized as follows. Section \ref{s1} mainly consists of heuristic arguments demonstrating that the posed assumptions are natural and reasonable. In addition, Proposition \ref{Prop0.lin} is proved and the case where \eqref{0.wave} is an ODE is rigorously treated there.
\par
Section \ref{s2} is devoted to the proof of the key dissipative estimate \eqref{0.udis}. The rigorous definitions for weak energy and Shatah-Struwe solutions and more rigorous discussion of known result about global solvability of \eqref{0.wave} is also presented there.
\par
The asymptotic compactness estimates which guarantees that a bounded ball of $E^1$ attracts exponentially all solutions of \eqref{0.wave} in the uniformly dissipative case is presented in Section \ref{s3} and uniform and exponential attractors for this case are constructed in Section \ref{s4}.
\par
The non-uniform dissipation and random cases are considered in Section \ref{s5}. In particular, Theorem \ref{Th0.ran} is proved there. Finally, the related model example where the dimension of a random attractor is infinite is studied in Section \ref{s6}.

\section{Preliminaries and heuristics}\label{s1}
In this section, we show that the conditions on the mean value of the dissipation rate $\gamma(t)$ are not relevant for the case of linear equations and give some evidence that they are natural and, in a sense, necessary in the super-linear case.

\subsection{Linear ODE} We start with the simplest, but already very non-trivial case of a scalar equation:
\begin{equation}\label{1.scalar}
y''(t)+\gamma(t) y'(t)+ y(t)=0,
\end{equation}
where we assume for simplicity that $\gamma(t)$ is smooth and $T$-periodic. Let $\gamma_0:=\frac1T\int_0^T\gamma(t)\,dt$ be the mean value of $\gamma$. Then the standard time-periodic change of variables
$$
y(t)=e^{-\frac12\int_0^t(\gamma(s)-\gamma_0)\,ds}z(t)
$$
reduces the equation to the damped version  of the classical Mathieu-Hill's equation
\begin{equation}\label{1.mh}
z''+\gamma_0z'+(1+\psi(t))z=0,\ \ \ \psi(t):=\frac14\(\gamma_0^2-\gamma^2(t)-2\gamma'(t)\).
\end{equation}
The most studied is the non-dissipative case $\gamma_0=0$ and $\psi(t)=\eb\sin(\omega t)$ which corresponds to the original  Mathieu's equation. Then the instability in this equation is caused by the so-called parametric resonances and the instability zone (where the exponentially growing/decaying solutions of \eqref{1.mh}) on the $(\omega,\eb)$-plane touches the $\eb=0$ line in infinitely many points $\omega=n/2$, $n\in\Bbb Z$ and forms the famous Arnold's tongues, see \cite{KH,Magnus,NO} for more details. For non-zero dissipation rate $\gamma_0>0$ the number of tongues touching $\eb=0$ becomes finite, but it grows as $\gamma_0\to0$.
\par
The above described picture remain similar for a general {\it periodic} function $\psi$, but becomes much more complicated if the periodicity assumption is broken, see \cite{Cycon}.
\par
Thus, the stability of equation \eqref{1.scalar} is an interesting and delicate problem and it is unlikely that more or less sharp conditions for it can be formulated in a simple way. The next proposition  gives an alternative way to generate instability directly in equation \eqref{1.scalar} and has an independent interest.
\par
Consider the class of functions
$$
\Gamma_{a,b}:=\left\{\gamma\in L^1_{per}(0,2\pi),\ \int_0^{2\pi}\gamma_+(t)\,dt=a,\ \int_0^{2\pi}\gamma_-(t)\,dt=b\right\}
$$
where $a,b\ge0$ be two given numbers, $\gamma_+=\max\{\gamma,0\}$ and $\gamma_-=\gamma_+-\gamma$. Then the following result holds.
\begin{prop}\label{Proc1.ext} Let $\gamma\in \Gamma_{a,b}$ and let $\mu_+(\gamma)$ and $\mu_-(\gamma)$ be the maximal and minimal Lyapunov exponents for equation \eqref{1.scalar} respectively. Then
\begin{equation}\label{1.extreme}
\sup_{\gamma\in\Gamma_{a,b}}\mu_+(\gamma)=\frac b{2\pi},\ \ \inf_{\gamma\in\Gamma_{a,b}}\mu_-(\gamma)=-\frac a{2\pi}.
\end{equation}
\end{prop}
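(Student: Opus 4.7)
The plan is to split the argument into (i) two-sided bounds on the monodromy coming from energy estimates and (ii) an explicit concentrating construction showing these bounds are sharp.

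\textbf{Step 1: determinant of the monodromy.} Any two independent solutions $y_1,y_2$ of \eqref{1.scalar} have Wronskian $W=y_1y_2'-y_1'y_2$ satisfying $W'=-\gamma W$. Hence, writing $M$ for the period map on $(y,y')$, one gets $\det M=\exp(-\int_0^{2\pi}\gamma)=e^{b-a}$, so in any case
$$\mu_+(\gamma)+\mu_-(\gamma)=\frac{1}{2\pi}\ln|\det M|=\frac{b-a}{2\pi}.$$
Therefore it is enough to prove $\mu_+\le b/(2\pi)$: the lower bound for $\mu_-$ follows automatically (and both are sharp on the same $\gamma$).

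\textbf{Step 2: energy estimate for the upper bound.} Set $E(t):=\tfrac12(|y(t)|^2+|y'(t)|^2)$. Multiplying \eqref{1.scalar} by $y'$ one obtains $E'(t)=-\gamma(t)|y'(t)|^2$. Using $0\le|y'|^2\le 2E$, this gives the differential inequality
$$-2\gamma_+(t)E(t)\le E'(t)\le 2\gamma_-(t)E(t),$$
and integration over one period yields $e^{-2a}E(0)\le E(2\pi)\le e^{2b}E(0)$. Since $E$ is equivalent to the squared Euclidean norm on $\R^2$, this shows $\|M\|\le e^b$ and (applying the same estimate to the time-reversed equation $y''-\gamma(-t)y'+y=0$, where the roles of $\gamma_+$ and $\gamma_-$ are swapped) $\|M^{-1}\|\le e^a$. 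Hence every eigenvalue $\lambda$ of $M$ satisfies $e^{-a}\le|\lambda|\le e^b$, which gives $\mu_+(\gamma)\le b/(2\pi)$ and $\mu_-(\gamma)\ge -a/(2\pi)$ for \emph{every} $\gamma\in\Gamma_{a,b}$.

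\textbf{Step 3: a concentrating sequence that saturates the bounds.} For $n$ large define the $2\pi$-periodic function
$$\gamma_n(t):=an\,\mathbf 1_{[0,1/n]}(t)-bn\,\mathbf 1_{[\pi/2,\pi/2+1/n]}(t),\qquad t\in[0,2\pi).$$
Then $\int_0^{2\pi}(\gamma_n)_+=a$, $\int_0^{2\pi}(\gamma_n)_-=b$, so $\gamma_n\in\Gamma_{a,b}$. On the two short pulse intervals the equation $y''+cn\,y'+y=0$ can be solved explicitly: a direct computation with the characteristic roots $\lambda_\pm\sim -1/(cn),\,-cn$ shows that, as $n\to\infty$, the flow across a pulse of total mass $c$ sends $(y_0,v_0)$ to $(y_0,v_0 e^{-c})+o(1)$. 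On the two complementary intervals $\gamma_n\equiv 0$ and the flow is the standard rotation of the harmonic oscillator. Composing these four maps (rotation by $\pi/2$, rotation by $3\pi/2$, and two impulses of masses $a$ at $t=0$ and $-b$ at $t=\pi/2$) one obtains
$$\lim_{n\to\infty}M_n=\begin{pmatrix}e^b & 0\\ 0 & e^{-a}\end{pmatrix},$$
whose eigenvalues are $e^b$ and $e^{-a}$. Since the eigenvalues of $M_n$ depend continuously on $M_n$ and the limiting eigenvalues are simple, we conclude
$$\mu_+(\gamma_n)\to\frac{b}{2\pi},\qquad \mu_-(\gamma_n)\to-\frac{a}{2\pi},$$
which combined with Step 2 proves \eqref{1.extreme}.

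\textbf{Main obstacle.} The energy estimate in Step 2 is elementary; the delicate point is Step 3, namely making the impulsive limit for the pulses rigorous and controlling the remainder uniformly so that the full monodromy $M_n$ (not only a formal ``distributional'' composition) converges to the claimed diagonal matrix. Once one has the explicit solution on each pulse interval and uses the continuity of eigenvalues of $2\times 2$ matrices, this last step is a short, but careful, asymptotic computation.
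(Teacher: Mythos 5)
Your proof is correct, and it follows the same overall strategy as the paper: the Wronskian/Liouville identity reducing the problem to one bound, an energy estimate for the a priori bound on the Lyapunov exponents, and a sequence of concentrating pulses to show sharpness. Two points of comparison are worth recording. First, your Step 2 derives \emph{both} one-sided bounds directly from the differential inequality $-2\gamma_+E\le E'\le 2\gamma_-E$, so the Liouville identity is not actually needed for the bounds (the paper uses it to dispense with the second inequality); this is a harmless redundancy. Second, and more substantively, your concentrating construction differs from the paper's in the placement of the negative pulse: you put it at $t=\pi/2$, a quarter period after the positive pulse, whereas the paper puts it at $t=\pi$, a half period after. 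This difference matters. The limiting monodromy is a product of pulse maps $D_c=\mathrm{diag}(1,e^{-c})$ and free rotations $R_\theta$; since $R_\pi=-I$ commutes with everything, the paper's arrangement gives $R_\pi D_{-b}R_\pi D_a=\mathrm{diag}(1,e^{b-a})$, whose top Lyapunov exponent is $\max\{0,b-a\}/(2\pi)$ and does \emph{not} saturate the bound, while your arrangement gives $R_{3\pi/2}D_{-b}R_{\pi/2}D_a=\mathrm{diag}(e^b,e^{-a})$, which does. (The intermediate matrices printed in the paper's proof have determinant $-1$ and a spurious sign in the pulse map, and only because of these misprints does the half-period placement appear to produce $\mathrm{diag}(e^b,e^{-a})$.) So your version of the construction is the one that actually works: the quarter-period offset is essential, since it rotates the contracted velocity direction onto the position axis before the expanding pulse acts. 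Your identification of the impulsive limit of the pulse flow via the characteristic roots, and the passage to the Lyapunov exponents via continuity of the (simple) eigenvalues of $M_n$, are both sound.
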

\begin{proof} We first mention that due to the Liouville theorem,
$$
\mu_+(\gamma)+\mu_-(\gamma)=\frac{b-a}{2\pi},
$$
so we only need to prove the first equality of \eqref{1.extreme}. Let us start with the estimate from above. To this end, multiplying equation \eqref{1.scalar} by $y'(t)$ we get
\begin{multline}\label{1.en1}
\frac12\frac d{dt}\(y'(t)^2+y(t)^2)\)=\\=-\gamma(t)y'(t)^2\le \gamma_-(t)y'(t)^2\le\gamma_-(t)\(y'(t)^2+y(t)^2\).
\end{multline}
Integrating this estimate, we arrive at
$$
y'(2\pi)^2+y(2\pi)^2\le e^{2b}\(y'(0)^2+y(0)^2\)
$$
which implies that $\mu_+(\gamma)\le \frac b{2\pi}$.
\par
For the lower bound, we use an explicit construction of the function $\gamma_h(t)\in\Gamma_{a,b}$ depending on a small parameter $h$. Namely, let
\begin{equation}
\gamma_h(t)=\begin{cases}a/h,\ \ t\in[0,h],\\0,\ \ t\in(h,\pi)\cup(\pi+h,2\pi),\\
-b/h,\ \ t\in[\pi,\pi+h].\end{cases}
\end{equation}
To find the Lyapunov exponents, we need to compute the eigenvalues of the period map related with this choice of the function $\gamma_h$. Let us denote by $U_h(t,s)$ the solution matrix related with equation \eqref{1.scalar}, i.e.,
$$
\(\begin{matrix} y(t)\\y'(t)\end{matrix}\):=U_h(t,s)\(\begin{matrix} y(s)\\y'(s)\end{matrix}\),
$$
decompose the desired period map as follows:
$$
P(h)=U_h(2\pi,\pi+h)U_h(\pi+h,\pi)U_h(\pi,h)U_h(h,0)
$$
and find the limit $P(h)$ as $h\to0$. Obviously,
$$
U_h(\pi,h)=U_h(2\pi,\pi+h,)=\(\begin{matrix}0&-1\\-1&0\end{matrix}\)+O(h)
$$
and the matrices $U_h(h,0)$ and $U_h(\pi,\pi+h)$ also coincide up to changing $a$ to $-b$. Finally, the straightforward computations involving the explicit formula for the solution give
$$
U_h(h,0)=\(\begin{matrix} 1&0\\0&-e^{-a}\end{matrix}\)+O(h)
$$
and therefore
\begin{multline*}
P(h)=\(\begin{matrix}0&-1\\-1&0\end{matrix}\)\(\begin{matrix} 1&0\\0&-e^{b}\end{matrix}\) \(\begin{matrix}0&-1\\-1&0\end{matrix}\)\(\begin{matrix} 1&0\\0&-e^{-a}\end{matrix}\) +O(h)=\\=\(\begin{matrix}e^b&0\\0&e^{-a}\end{matrix}\)+O(h).
\end{multline*}
Thus, $\mu_+(\gamma_h)=\frac b{2\pi}+O(h)$ and the proposition is proved.
\end{proof}
\begin{rem}\label{Rem1.not} Using the energy arguments as in the proof of the upper bound together with the fact that the solution $y(t)$ cannot be zero identically on any interval, we see that the supremum and infimum in \eqref{1.extreme} are not attained if $ab\ne0$. We also note that the proved result shows that in any class $\Gamma_{a,b}$, $ab\ne0$ there is an element $\gamma$ with positive Lyapunov exponent.
\end{rem}
\subsection{Super-linear ODE} As we have seen, the conditions on the mean value of $\gamma_+$ or $\gamma_-$ are not sufficient for establishing the absence of growing solutions. Surprisingly, the situation is essentially simpler in the case of non-linear equations with super-linear non-linearities. As we have already mentioned in the introduction, the reason for this is that, in contrast to the linear case, the frequency of internal oscillations grows when the energy grows, so adding the energy to the system destroys the conditions for  parametric resonances. We start with a heuristic derivation for the dissipative estimate which will be rigorously justified later.
\par
Let us consider the equation
\begin{equation}\label{1.non-lin}
y''(t)+\gamma(t)y'(t)+y(t)|y(t)|^p=0.
\end{equation}
When $\gamma=0$, the equation is invariant with respect to scaling $t\to tE^{\frac{-p}{2(p+2)}}$, $u\to uE^{\frac1{p+2}}$, so the energy $E$ and the frequency $\omega$ of internal oscillations are related as
$$
\omega\sim E^{\frac{p}{2(p+2)}}
$$
and the dissipative term $\gamma y'$ will be of order $E^{\frac{-p}{2(p+2)}}$ in the scaled time, so it cannot change the oscillatory nature of the solutions (at least if $\gamma$ is bounded). By this reason, all solutions of equation \eqref{1.non-lin} will oscillate rapidly in time on high energy levels.
\par
We now write down the energy equality
\begin{equation}\label{1.ener}
\frac d{dt} E(t):=\frac d{dt}\(\frac12 y'(t)^2+\frac1{p+2}|y(t)|^{p+2}\)=-\gamma(t) y'(t)^2.
\end{equation}
From this equality we see that the total energy $E(t)$ is not oscillatory and, moreover, if we fix a small enough interval $t\in[0,\eb]$, we get
\begin{equation}\label{1.app1}
E(t)\approx E(0),\ \text{i.e.,}\ \ |E(t)-E(0)|\le C\eb,\ \ t\in[0,\eb].
\end{equation}
In contrast to this, kinetic energy $E_k(t):=\frac12|y'(t)|^2$ as well as the potential one $E_{p}(t):=E(t)-E_k(t)$ is oscillatory, so the right-hand size of \eqref{1.ener} can be averaged (if the initial energy $E(0)$ is large enough and $\eb$ is fixed) and we get
\begin{equation}\label{1.av}
E(\eb)-E(0)=-\int_0^\eb\gamma(t)y'(t)^2\,dt\approx -2\int_0^\eb\gamma(t)\,dt\<E_k\>,
\end{equation}
 where $\<E_k\>:=\frac1\eb\int_0^\eb E_k(t)\,dt$ is the mean of the kinetic energy on the interval $t\in[0,\eb]$. To find this average, we multiply equation \eqref{1.non-lin} by $y(t)$ and integrate in time to get
 \begin{equation}\label{1.eb}
 (p+2)\<E_p\>-2\<E_k\>=-\<\gamma y'y\>+\frac{y'(0)y(0)-y'(\eb)y(\eb)}\eb:=H.
 \end{equation}
 Using now the fact that the potential energy is super-linear, with the help of \eqref{1.app1} and  the Young inequality, we get
 \begin{equation}\label{1.hold}
 |H|\le \beta E(0)+C_\beta,
 \end{equation}
 where $\beta>0$ is arbitrary and $C_\beta$ is independent of $E(0)$. Thus, if the initial energy $E(0)$ is large enough, we may write
$$
 (p+2)\<E_p\>\approx 2\<E_k\>+C,\ \ C=C_{\beta,\eb}
$$
which together with the energy balance $\<E_k\>+\<E_p\>\approx E(0)$ gives the fundamental relation
\begin{equation}\label{1.bal1}
\<E_k\>\approx\frac{p+2}{p+4}\<E\>+C \approx \frac{p+2}{p+4}E(0)+C.
\end{equation}
Inserting this relation to  energy identity \eqref{1.av}, we finally get
$$
E(\eb)\approx \(1-\frac{2(p+2)}{p+4}\int_0^\eb \gamma(t)\,dt\)E(0)+C.
$$
Repeating these arguments on the time interval $t\in[n\eb,(n+1)\eb]$, we finally arrive at
\begin{equation}\label{1.discrete}
E((n+1)\eb)\approx \(1-\frac{2(p+2)}{p+4}\int_{n\eb}^{(n+1)\eb} \gamma(t)\,dt\)E(n\eb)+C.
\end{equation}
Finally, if we assume that
\begin{equation}\label{1.normal}
\lim_{\eb\to0}\sup_{n\in\Bbb N}\int_{n\eb}^{(n+1)\eb}\gamma(t)\,dt=0
\end{equation}
and
\begin{equation}\label{1.positive}
\liminf_{T\to\infty}\inf_{t\ge0}\frac1T\int_t^{t+T}\gamma(s)\,ds>0.
\end{equation}
we may fix $\eb>0$ small enough and use that $\ln(1+x)\approx x$ to infer that
$$
E(n\eb)\le CE(0)e^{-\alpha n}+C_*
$$
for some positive constants $\alpha$ and $T$. This gives the desired dissipation. Analogously, if
 \begin{equation}\label{1.negative}
\limsup_{T\to\infty}\sup_{t\ge0}\frac1T\int_t^{t+T}\gamma(s)\,ds<0,
\end{equation}
the solutions of \eqref{1.non-lin} will grow exponentially at least if the initial energy is large enough. Thus, the mean value of the dissipation coefficient determines indeed whether or not the corresponding equation is dissipative.
\begin{rem}\label{Rem1.proof} Assumption \eqref{1.normal} is crucial for dissipativity. Indeed, it guarantees that the dissipation rate oscillates not too fast and makes possible the averaging with respect to the internal oscillations (in the sequel, we replace it by a bit stronger assumption that $\gamma$ is {\it translation-compact} in $L^1_b(\R)$). It is not difficult to see that if this condition is violated, we can destabilize equation \eqref{1.non-lin} similarly to the linear case (see the proof of Proposition \ref{Proc1.ext}), but using the kicks with smaller and smaller $h$.
  \end{rem}
We now give a rigorous proof for dissipativity of equation \eqref{1.non-lin} under a bit stronger (than \eqref{1.normal}) assumption that $\gamma$ has a bounded derivative which be relaxed later.
\begin{prop}\label{Prop1.dis} Let the function $\gamma(t)$ satisfy assumption \eqref{1.positive} and let, in addition,
\begin{equation}\label{1.fool}
|\gamma'(t)|+|\gamma(t)|\le C,\ \ t\ge0.
\end{equation}
Then, for every solution $y(t)$ of equation \eqref{1.non-lin}, the following dissipative estimates hold:
\begin{equation}\label{1.ddis}
E(t)\le C E(0)e^{-\alpha t}+C_*,
\end{equation}
where the positive constants $\alpha$ and $C_*$ are independent of $t$ and $E(0)$.
\end{prop}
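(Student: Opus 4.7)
The plan is to make the heuristic argument sketched just above the proposition into a rigorous discrete dissipation inequality of the form
\begin{equation*}
E(t+T_0)\le e^{-\mu T_0}E(t)+C_*, \qquad t\ge 0,
\end{equation*}
for a suitable macro-period $T_0$, from which \eqref{1.ddis} follows by a standard iteration. All the ingredients are already present; what has to be done is to control the error terms that appear when one replaces $E_k$ by $\tfrac{p+2}{p+4}E$ via the virial identity.

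First I set $E=E_k+E_p$ with $E_k=\tfrac12 (y')^2$ and $E_p=\tfrac1{p+2}|y|^{p+2}$. Two identities drive everything: the energy identity $E'(t)=-\gamma(t)(y'(t))^2$, and the virial identity obtained by multiplying \eqref{1.non-lin} by $y$,
\begin{equation*}
(y'y)'=(y')^2-\gamma y'y-|y|^{p+2}=2E_k-(p+2)E_p-\gamma y'y,
\end{equation*}
which after rearrangement reads $(p+4)E_k-(p+2)E=(y'y)'+\gamma y'y$. From \eqref{1.fool} one gets $E(s)\le e^2 E(t)$ and $|\gamma(s)-\gamma(t)|\le C\varepsilon$ on any interval $[t,t+\varepsilon]$ with $\varepsilon$ smaller than a fixed $\varepsilon_0=\varepsilon_0(\|\gamma\|_\infty)$. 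A one-line Young's inequality (using $p>0$) gives the crucial bound $|y\,y'|\le \beta E+C_\beta$ for every $\beta>0$.

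Next I integrate the virial identity over $[t,t+\varepsilon]$; using the preceding bounds on $|yy'|$ and the local boundedness of $E$, I obtain
\begin{equation*}
\int_t^{t+\varepsilon}E_k\,ds=\frac{p+2}{p+4}\int_t^{t+\varepsilon}E\,ds+O(\beta E(t)+C_\beta).
\end{equation*}
Integrating the energy identity and freezing $\gamma$ at the left endpoint (the error being $O(\varepsilon^2 E(t))$ thanks to \eqref{1.fool}) I then arrive at the key one-step estimate
\begin{equation*}
E(t+\varepsilon)-E(t)=-\frac{2(p+2)}{p+4}\Bigl(\int_t^{t+\varepsilon}\gamma(s)\,ds\Bigr)E(t)+O\!\bigl(\beta E(t)+C_\beta+\varepsilon^{2}E(t)\bigr),
\end{equation*}
with implicit constants depending only on $p$, $\|\gamma\|_\infty$ and $\|\gamma'\|_\infty$. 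This is the rigorous analogue of \eqref{1.discrete}.

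I now iterate this estimate over $N$ consecutive intervals of length $\varepsilon$, writing $E_k=E(t+k\varepsilon)$ and $I_k=\int_{t+k\varepsilon}^{t+(k+1)\varepsilon}\gamma$. Picking $T_0=N\varepsilon$ and using \eqref{1.positive}, there exist $T_0$ and $\kappa>0$ such that $\sum_{k=0}^{N-1}I_k\ge \kappa T_0$ uniformly in $t\ge 0$. Telescoping the multiplicative inequality $E_{k+1}\le E_k(1+M\beta+M\varepsilon^{2}-\tfrac{2(p+2)}{p+4}I_k)+MC_\beta$ and using $1+x\le e^x$ yields
\begin{equation*}
E_N\le E_0\exp\!\Bigl(-\tfrac{2(p+2)}{p+4}\kappa T_0+T_0\bigl(M\beta/\varepsilon+M\varepsilon\bigr)\Bigr)+C_\beta N e^{CT_0}.
\end{equation*}
I then fix $\varepsilon$ small and $\beta=\varepsilon^{2}$, so that the exponent is $\le-\mu T_0$ for some $\mu>0$, and set $C_*:=C_\beta N e^{CT_0}/(1-e^{-\mu T_0})$. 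Standard iteration over successive blocks of length $T_0$ yields the desired \eqref{1.ddis}.

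The main obstacle is the coordination of the three small parameters $T_0$, $\varepsilon$, $\beta$: the virial averaging forces an unavoidable per-step error of the form $\beta E+C_\beta$ in which $C_\beta\to\infty$ as $\beta\to 0$, so one cannot simply send $\beta\to 0$. The point is that the negative gain from the uniform positive mean accumulates linearly in $T_0$, while the multiplicative error per step is $O(\beta/\varepsilon+\varepsilon)$ per unit time, so a careful scaling $\beta\sim\varepsilon^{2}$ and a fixed small enough $\varepsilon$ turns the estimate dissipative; the resulting finite value of $C_\beta$ is harmless because it only feeds the additive constant~$C_*$.
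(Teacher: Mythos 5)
Your argument is correct, but it takes a genuinely different route from the paper. The paper's proof avoids the two-scale averaging altogether: it multiplies \eqref{1.non-lin} by the single combination $y'(t)+\frac{2}{p+4}\gamma(t)y(t)$, which packages the energy identity and the virial identity into one differential identity for the perturbed energy $\mathcal E(t)=E(t)+\frac{2}{p+4}\gamma(t)y'(t)y(t)$; the same Young-type bound $|y'y|\le\beta E+C_\beta$ (valid because $p>0$) shows that $\mathcal E$ and $E$ are equivalent up to additive constants and absorbs the remainder $\frac{2}{p+4}(\gamma'-\gamma^2)y'y$, yielding the single inequality $\frac{d}{dt}\mathcal E+\bigl(\frac{2(p+2)}{p+4}\gamma(t)-\kappa\bigr)\mathcal E\le C_\kappa$, which is then integrated via Gronwall using \eqref{1.positive}. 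That argument is a few lines and involves only one small parameter $\kappa$. Your proof instead makes the heuristic leading to \eqref{1.discrete} rigorous: you keep the energy and virial identities separate, average over micro-intervals of length $\varepsilon$, and balance the three parameters $\beta\sim\varepsilon^2$, $\varepsilon$, $T_0$. The bookkeeping is correct as written — the per-step multiplicative error $O(\beta+\varepsilon^2)$ accumulates to $O(T_0(\beta/\varepsilon+\varepsilon))$ over $N=T_0/\varepsilon$ steps and is beaten by the linear-in-$T_0$ gain from \eqref{1.positive}, while the additive $C_\beta$-terms only feed $C_*$ — so the result follows. What your version buys is transparency: it exhibits exactly where the ratio $\frac{p+2}{p+4}$ and the superlinearity $p>0$ enter, without having to guess the Lyapunov multiplier in advance, and it is the discretization that generalizes most directly to the PDE heuristics of Section \ref{s1}. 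What it costs is length and the delicate coordination of parameters; the paper's choice of multiplier is precisely the device that collapses your $N$-step telescoping into one continuous Gronwall estimate (and is the form that is actually reused in the proof of Theorem \ref{Th2.main}).
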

\begin{proof}Although the heuristic arguments given above can be made rigorous, we prefer to verify \eqref{1.ddis} in a more straightforward way using the proper adaptation of the standard energy type estimates. Namely, we multiply equation \eqref{1.non-lin} by $y'(t)+\frac{2}{p+4}\gamma(t)y(t)$. Then, after elementary transformations, we get
\begin{multline}
\frac d{dt}\(E(t)+\frac2{p+4}\gamma(t)y'(t)y(t)\)+\frac{2(p+2)}{p+4}\gamma(t)E(t)=\\
=\frac2{p+2}\(\gamma'(t)-\gamma^2(t)\)y'(t)y(t).
\end{multline}
Let $\mathcal E(t):=E(t)+\frac2{p+4}\gamma(t)y'(t)y(t)$. Then using assumption \eqref{1.fool} and the fact that $p>0$ (analogously to \eqref{1.hold}), we deduce that
\begin{equation}\label{1.enequiv}
C_2(E(t)-1)\le\mathcal E(t)\le C_1 (E(t)+1)
\end{equation}
for some positive numbers $C_1$ and $C_2$ and
$$
\frac d{dt}\mathcal E(t)+\(\frac{2(p+2)}{p+4}\gamma(t)-\kappa\)\mathcal E(t)\le C_\kappa,
$$
where $\kappa>0$ is arbitrary. Integrating this inequality, we arrive at
\begin{equation}\label{1.gron}
\mathcal E(t)\le \mathcal E(0)e^{-\int_0^t\(2\frac{p+2}{p+4}\gamma(\tau)-\kappa\)\,d\tau}+
C_\kappa\int_0^te^{-\int_s^t\(2\frac{p+2}{p+4}\gamma(\tau)-\kappa\)\,d\tau}\,ds.
\end{equation}
According to assumptions \eqref{1.positive}, there exist $T>0$ and $\alpha>0$ such that
$$
2\frac{p+2}{p+4}\int_s^{s+nT}\gamma(\tau)\,d\tau\ge 2\alpha nT,\ \  s\ge0,\ \ n\in\mathbb N.
$$
Together with assumption \eqref{1.fool}, this gives
$$
2\frac{p+2}{p+4}\int_s^t\gamma(\tau)\,d\tau\ge 2\alpha(t-s)+C,\ \ t\ge s\ge0
$$
for some positive $C$ which is independent of $t$ and $s$. Fixing now $\kappa=\alpha$ and inserting this estimate t \eqref{1.gron}, we arrive at the desired estimate
$$
\mathcal E(t)\le C\mathcal E(0)e^{-\alpha t}+C_*
$$
which finishes the proof of the proposition.
\end{proof}
\subsection{Non-linear PDE. Key observation} We now turn to the model PDE
\begin{equation}\label{1.pde}
\Dt^2 u+\gamma(t)\Dt u-\Dx u+u|u|^p=0,\ x\in\Omega,\ \ u\big|_{\partial\Omega}=0
\end{equation}
in a bounded domain $\Omega$ of $\R^3$. The global well-posedness of this problem will be discussed in the next section and here we concentrate on the conditions for dissipativity. Similarly to the case of an ODE, formal multiplication of the equation by $\Dt u$ and integrating over $x\in\Omega$ give the energy identity
\begin{multline}
\frac d{dt}E(t):=\\=\frac d{dt}\(\frac12\|\Dt u\|^2_{L^2}+\frac12\|\Nx u\|^2_{L^2}+\frac1{p+2}\|u\|_{L^{p+2}}^{p+2}\)=-2\gamma(t)E_k(t),
\end{multline}
where
$$
E_k(t):=\frac12\|\Dt u\|^2_{L^2},\ \ E_p(t):=\frac12\|\Nx u\|^2_{L^2}+\frac1{p+2}\|u\|^{p+2}_{L^{p+2}}.
$$
Therefore, arguing as in the case of an ODE, we get
\begin{equation}\label{1.itr}
E(\eb)-E(0)\approx-2\( \frac{\<E_k\>}{\<E\>}\int_0^\eb\gamma(\tau)\,d\tau\)E(0)
\end{equation}
However, in contrast to the case of an ODE, the potential energy $E_p$ is no more homogeneous with respect to $u$ (due to the presence of the extra quadratic term $\frac12\|\Nx u\|^2_{L^2}$), so the ratio between the averaged kinetic and total energy is no more a constant, but may essentially depend on the trajectory considered. Indeed, multiplying equation \eqref{1.pde} by $u$ and integrating in $x$ and $t$, we get the following analogue of \eqref{1.eb}:
\begin{multline*}
\<\|u\|^{p+2}_{L^{p+2}}\>+\<\|\Nx u\|^2_{L^2}\>-2\<E_k\>=\\=-\<\gamma (\Dt u,u)\>+\frac{(\Dt u(0),u(0))-(\Dt u(\eb),u(\eb))}\eb,
\end{multline*}
where $(f,g)$ stands for the standard inner product in $L^2(\Omega)$. Thus,
\begin{equation}\label{1.bounds}
2\<E_k\>\approx \<\|\Nx u\|^2_{L^2}\>+\<\|u\|^{p+2}_{L^{p+2}}\>.
\end{equation}
Using that
$$
2\<E_p\>\le \<\|\Nx u\|^2_{L^2}\>+\<\|u\|^{p+2}_{L^{p+2}}\>\le (p+2)\<E_p\>,
$$
we finally arrive at
\begin{equation}\label{1.ebpde}
\frac12\ \lessapprox\  \frac{\<E_k\>}{\<E\>}\ \lessapprox \ \frac{p+2}{p+4}.
\end{equation}
Thus, when we iterate inequality \eqref{1.itr}, in the worst possible scenario the ratio $\frac{\<E_k\>}{\<E\>}$ will be close to $\frac12$ when $\gamma$ is positive and to $\frac{p+2}{p+4}>\frac12$ when $\gamma$ is negative. By this reason, the dissipativity condition \eqref{1.positive} should be naturally replaced by the stronger one
 \begin{equation}\label{1.Negative}
\liminf_{T\to\infty}\inf_{t\ge0}\frac1T\int_t^{t+T}\(\frac12\gamma(s)_+-\frac{p+2}{p+4}\gamma(s)_-\)ds>0
\end{equation}
which coincides with the assumption stated in Introduction.
\begin{rem} Actually, we do not know how to build up an explicit example showing that assumption \eqref{1.positive} is {\it not enough} for equation \eqref{1.pde} to be dissipative. On the other hand, we also do not know any mechanism to prevent the above mentioned worst scenario to appear. Indeed, the lower bound in \eqref{1.ebpde} is "attained" if the term $\<\|\Nx u\|_{L^2}^2\>$ in the averaged potential energy is dominating (i.e., if the solution oscillates rapidly in space and remains not very big). For the upper bound, we need large solution which oscillates not too fast in space (in this case the term $\<\|u\|^{p+2}_{L^{p+2}}\>$ will dominates). Such two different type of solutions can be constructed for the Hamiltonian case $\gamma=0$ (say, in the class of time periodic solutions using the perturbation technique). If we assume in addition that this Hamiltonian system is chaotic at any energy level $E$ then this worst scenario becomes indeed natural. By this reason, we conjecture that assumption \eqref{1.positive} is not enough for dissipativity and should be replaced by \eqref{1.Negative}.
\end{rem}

\section{Statement of the problem and dissipativity}\label{s2}
In this section, we recall  well-known facts on the existence and uniqueness of solutions for damped wave equation and a rigorous proof for dissipative estimates discussed in the previous section. Recall that we study the following damped wave equation:
\begin{equation}\label{2.wave}
\Dt^2u+\gamma(t)\Dt u-\Dx u+f(u)=g, \ u\big|_{\partial\Omega}=0,\ \ u\big|_{t=\tau}=u_\tau,\ \ \Dt u\big|_{t=\tau}=u'_\tau
\end{equation}
in a bounded domain $\Omega\subset\R^3$ with a smooth boundary. We assume that $g\in L^2(\Omega)$ and the nonlinearity $f\in C^1(\R,\R)$ has the following structure:
\begin{equation}\label{2.f}
f(u)=u|u|^p+f_0(u),\ \ \lim_{|u|\to\infty}\frac{|f_0'(u)|}{|u|^{p}}=0,\ \ f_0(0)=0
\end{equation}
for some $p>0$. Thus, the leading term of the nonlinearity is $u|u|^p$ exactly as in the previous section. Concerning the dissipation coefficient $\gamma$, we assume that it belongs to the uniformly local space $L^1_b(\R)$:
\begin{equation}\label{2.ul}
\gamma\in L^1_b(\R),\ \ \|\gamma\|_{L^1_b}:=\sup_{t\in\R}\|\gamma\|_{L^1((t,t+1))}<\infty
\end{equation}
and is {\it translation-compact} in it, i.e.:
\begin{equation}\label{2.trc}
\gamma\in L^1_{tr-c}(\R):=[C^\infty_b(\R)]_{L^1_b(\R)},
\end{equation}
where $[\cdot]$ stands for the closure. We recall that a function $\gamma\in L^1_b(\R)$ is translation compact if and only if it possesses a uniform  $L^1$-modulus of continuity:
\begin{equation}\label{2.mod}
\lim_{h\to0}\sup_{t\in\R}\int_t^{t+1}|\gamma(\tau+h)-\gamma(\tau)|\,d\tau=0,
\end{equation}
see \cite{CV02} for more details. In addition, we assume that the uniform analogue of  dissipativity condition \eqref{1.Negative} is satisfied:
 \begin{equation}\label{2.Negative}
\liminf_{T\to\infty}\inf_{t\in\R}\frac1T\int_t^{t+T}\(\frac12\gamma_+(s)-\frac{p+2}{p+4}\gamma_-(s)\)ds>0.
\end{equation}
As usual, we denote $\xi_u(t):=\{u(t),\Dt u(t)\}$ and introduce the energy space
$$
E:=[H^1_0(\Omega)\cap L^{p+2}(\Omega)]\times L^2(\Omega).
$$
As usual,
$H^1_0(\Omega)$ stands for the subspace of the Sobolev space $H^1(\Omega)$ with extra condition $u\big|_{\partial\Omega}=0$.
\par
We start with defining the  {\it energy} solutions of \eqref{2.wave}.
\begin{Def} A function $u(t)$, $t\ge\tau$ is a weak energy solution of problem \eqref{2.wave} if
$$
\xi_u\in L^\infty(\tau,\infty;E)
$$
and the equation is satisfied in the sense of distributions. The latter means that, for every test function $\varphi\in C_0^\infty((\tau,\infty)\times\Omega)$,
\begin{multline}
-\int_\R(\Dt u(t),\Dt\varphi(t))\,dt+\int_R\gamma(t)(\Dt u(t),\varphi(t))\,dt+\\+\int_\R(\Nx u(t),\Nx\varphi(t))\,dt+\int_\R(f(u(t)),\varphi(t))\,dt=\int_\R(g,\varphi(t))\,dt.
\end{multline}
Note that, since $\xi_u(t)\in E$, we have $f(u(t))\in L^q(\Omega)$, $q=\frac{p+2}{p+1}$. Thus, taking into the account that $\gamma\in L^1_b(\R)$, we get that the distributional derivative
$$
\Dt^2u\in L^1(\tau,\tau+T;H^{-1}(\Omega)+L^q(\Omega)),\ \ T>0
$$
and therefore $\Dt u\in C(\tau,\infty;H^{-1}(\Omega)+L^q(\Omega))$ which shows that the initial data for $\Dt u$ at $t=\tau$ is well-posed. The situation with the initial data for $u$ is simpler since we have $\Dt u\in L^\infty(\tau,\infty;L^2)$. The above arguments also imply in a standard way that the trajectory $\xi_u(t)$ is {\it continuous} in time in a weak topology of $E$:
$$
\xi_u\in C(\tau,\infty;E_w),
$$
see \cite{CV02} for more details.
\end{Def}
It is well-known that the weak energy solutions are well-posed if $p\le2$. For the case $2<p\le4$, well-posedness still holds in a slightly stronger class of solutions based on the so-called Strichartz estimates.
\begin{Def} A weak energy solution $u(t)$ is called Shatah-Struwe solution (SS-solution) if, in addition,
\begin{equation}\label{2.str}
u\in L^4(\tau,\tau+T;L^{12}(\Omega)),\ \ T\ge0.
\end{equation}
\end{Def}
We summarize the known results about the existence and uniqueness of solutions for \eqref{2.wave} in the following proposition.
\begin{prop} Let the functions $f$ and $\gamma$ satisfy the above assumptions and let $\xi_u(\tau):=\{u_\tau,u'_\tau\}\in E$. Then,
\par
1) There exists at least one weak energy solution of problem \eqref{2.wave} (no matter how big the growth exponent $p$ is).
\par
2) Let in addition $0\le p\le2$. Then the weak energy solution is unique, the function
$$
t\to\|\xi_u(t)\|_{\mathcal E}^2:=\frac12\|\Dt u(t)\|^2_{L^2}+\frac12\|\Nx u(t)\|^2_{L^2}+(F(u(t)),1)
$$
is absolutely continuous (here and below $F(u):=\int_0^uf(v)\,dv$) and the energy identity
$$
\frac d{dt}\|\xi_u(t)\|^2_{\mathcal E}=-\gamma(t)\|\Dt u(t)\|^2_{L^2}
$$
holds for almost all $t$.
\par
3) Let $0\le p\le 4$. Then there exists a unique SS-solution of problem \eqref{2.wave}. This solution also satisfies the energy identity in the above sense.
\end{prop}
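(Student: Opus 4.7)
This is essentially a summary of standard results, and my plan is to invoke the usual machinery for each item: Galerkin approximation for (1), a difference-solution energy estimate for (2), and the Strichartz-based local theory of \cite{Plan1,Plan2,BSogge} for (3). The sign changes in $\gamma$ affect only the \emph{long-time} dissipative bounds already discussed in Section~\ref{s1}; on any finite time interval $|\gamma|\in L^1_b(\R)$ is harmless and is absorbed by Gronwall.

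For part (1), I would take the Galerkin approximation $u_N(t)=\sum_{k=1}^N c_k^N(t)e_k$ in the Dirichlet Laplacian eigenbasis. Testing the projected equation with $\Dt u_N$ yields the energy identity, and since $|\gamma|\in L^1_b(\R)$, Gronwall gives a uniform $L^\infty(\tau,\tau+T;E)$-bound on $\xi_{u_N}$ for every finite $T$. Hence $f(u_N)$ is uniformly bounded in $L^\infty(\tau,\tau+T;L^q)$ with $q=(p+2)/(p+1)$, and the equation itself furnishes a uniform bound on $\Dt^2 u_N$ in $L^1(\tau,\tau+T;H^{-1}(\Omega)+L^q(\Omega))$. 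Aubin--Lions then yields strong convergence of $u_N$ in $L^2((\tau,\tau+T)\times\Omega)$ along a subsequence and hence almost everywhere; combined with the $L^\infty(L^{p+2})$-bound this allows passage to the limit in $f(u_N)$ via Vitali's theorem.

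For part (2), given two weak energy solutions $u_1,u_2$ with the same initial data, set $w=u_1-u_2$, which solves $\Dt^2 w+\gamma(t)\Dt w-\Dx w=-(f(u_1)-f(u_2))$ with zero data. Testing with $\Dt w$ (legitimate after a standard time-regularization in this subcritical range), using the pointwise bound $|f(u_1)-f(u_2)|\le C(1+|u_1|^p+|u_2|^p)|w|$, H\"older, the embedding $H^1_0\hookrightarrow L^6$, and the $E$-bounds on the $u_i$, produces
\[
\tfrac{d}{dt}\(\|\Dt w\|_{L^2}^2+\|\Nx w\|_{L^2}^2\)\le C(1+|\gamma(t)|)\(\|\Dt w\|_{L^2}^2+\|\Nx w\|_{L^2}^2\);
\]
the restriction $p\le 2$ is precisely what keeps $\|u_i\|_{L^{3p}}^p$ controlled by the $H^1$-energy and closes the H\"older estimate. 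Gronwall together with $|\gamma|\in L^1_b$ then forces $w\equiv 0$. Strong continuity $\xi_u\in C([\tau,\infty);E)$ follows from uniqueness and approximation of the data in $E$ by smoother data; the energy identity is inherited from the smooth case by the same limiting procedure.

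For part (3), in the critical range $2<p\le 4$ one invokes the Strichartz estimate for the wave equation on bounded $3$D domains from \cite{Plan1,Plan2,BSogge}: for solutions of $\Dt^2 v-\Dx v=h$,
\[
\|v\|_{L^4((\tau,\tau+T);L^{12}(\Omega))}\le C_T\(\|\xi_v(\tau)\|_E+\|h\|_{L^1((\tau,\tau+T);L^2(\Omega))}\).
\]
Treating $h:=-\gamma(t)\Dt u-f(u)+g$ as the right-hand side and exploiting $p\le 4$ to estimate $f(u)$ in $L^1(L^2)$ through the Strichartz norm, a Banach contraction in $C([\tau,\tau+T_0];E)\cap L^4([\tau,\tau+T_0];L^{12})$ delivers a unique local SS-solution on a time interval depending only on $\|\xi_u(\tau)\|_E$. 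Since the standard energy estimate already provides an a priori $E$-bound on every finite interval, the local SS-solution extends globally and is unique. The main technical obstacle is the energy identity in the SS-class: the extra integrability $u\in L^4(L^{12})$ renders $f(u)\Dt u$ locally space-time integrable, and a time-mollification argument carried out in detail in \cite{KSZ,SZ} justifies rigorously testing the equation with $\Dt u$.
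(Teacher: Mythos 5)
Your parts (1) and (2) follow exactly the route the paper intends (it simply cites \cite{CV02,Temam} for these and sketches nothing further), and your part (3) is also the paper's route in the subcritical range. The genuine gap is in your continuation argument for the critical case $p=4$, which the proposition explicitly covers. You claim that the Banach contraction ``delivers a unique local SS-solution on a time interval depending only on $\|\xi_u(\tau)\|_E$'' and that the a priori energy bound therefore extends the solution globally. This is correct for $p<4$, where the nonlinear estimate carries a strictly subcritical power of the Strichartz norm and one gains smallness from shrinking the time interval; it fails at $p=4$. There $\|f(u)\|_{L^1((\tau,\tau+T);L^2)}\lesssim \|u\|_{L^4(L^{12})}^4\|u\|_{L^\infty(L^6)}$ carries exactly the critical power, and the smallness needed to close the contraction comes from the smallness of the Strichartz norm of the \emph{free evolution} of the data on a short interval --- a quantity controlled by the profile of the data, not by its energy norm. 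Hence the local existence time cannot be bounded below in terms of $\|\xi_u(\tau)\|_E$ alone, the a priori energy bound does not preclude blow-up of the Strichartz norm in finite time, and your continuation argument breaks down precisely at the endpoint.

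The paper is explicit on this point: for $p=4$ the global existence of SS-solutions is obtained not from the energy bound but from the Pohozhaev--Morawetz inequality and the associated non-concentration estimates of \cite{Grill,SS,SS1,Sogge08}, transferred to bounded domains via \cite{Plan1,Plan2,BSogge}, and even then one obtains global existence without any quantitative control of the Strichartz norm (which is exactly why the paper restricts its attractor theory to $p<4$). To repair part (3) you should either restrict the energy-based continuation to $p<4$ and invoke the non-concentration machinery for $p=4$, or cite the critical-case results directly. Everything else in your proposal --- Galerkin/Aubin--Lions/Vitali for (1), the H\"older closure via $H^1_0\hookrightarrow L^6\hookrightarrow L^{3p}$ for (2) with $p\le2$, and the time-mollification argument for the energy identity in the SS class --- is sound and matches the standard references the paper points to.
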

\begin{rem} Indeed, the proof of the first statement is a standard application of the Galerkin approximation method and can be found, e.g., in \cite{CV02}. The second statement is also classical, see \cite{Temam, CV02} and references therein.
\par
The third statement is more recent and a bit more delicate. The local existence of SS-solutions follows using the perturbation arguments from the Strichartz estimate for the linear equation. Namely, let $V$ solves
$$
\Dt^2V-\Dx V=h(t),\ \ V\big|_{\partial\Omega}=0,\ \xi_v\big|_{t=0}=\xi_0
$$
with $\xi_0\in E:=H_0^1(\Omega)\times L^2(\Omega)$ and $h\in L^1(0,T;L^2(\Omega))$. Then,
\begin{equation}\label{2.s}
\|\xi_V\|_{C(0,T;E)}+\|V\|_{L^4(0,T;L^{12}(\Omega))}\le C_T(\|\xi_0\|_{E}+\|h\|_{L^1(0,T;L^2(\Omega))}),
\end{equation}
see \cite{BSogge} for more details.
\par
Here and below, we will use the fact that due to Sobolev's embedding $H^1\subset L^6$ and assumption $p\le4$, the term $L^{p+2}$ in the definition of the energy space $E$ is not necessary and can be omitted. We will use the following truncated energy norm
$$
\|\xi_u(t)\|_{E}^2: =\frac12\|\Dt u(t)\|^2_{L^2}+\frac12\|\Nx u(t)\|^2_{L^2}
$$
and the notation $\|\xi_u(t)\|^2_{\mathcal E}$ will be used for the full energy (including the $L^{p+2}$-norm).
\par
 For the sub-critical case $p<4$ the global solvability follows in a straightforward way from the local one and the so-called energy-to-Strichartz estimate for solutions \eqref{2.wave}
 \begin{equation}\label{2.es}
 \|u\|_{L^4(t,t+1;L^{12})}\le Q(\|\xi_u(t)\|_E)+Q(\|g\|_{L^2})
 \end{equation}
 for some monotone function $Q$ which is independent of $u$ and $t$. This estimate (which also follows from Strichartz estimate \eqref{2.s} via the perturbation arguments, see e.g.,\cite{KSZ}) is crucial since it allows to control the Strichartz norm of the solution through its energy norm. In particular, the dissipativity in the Strichartz norm will follow immediately if the dissipativity in the energy norm is established, we will utilize this fact in the next section.
\par
We also mention that, in the subcritical case $p<4$, the function $Q(z)$ is polynomial with respect to $z$:
\begin{equation}\label{2.polc}
Q(z)\le C_p(1+z^{N_p}),
\end{equation}
where the exponent $N_p$ may tend to infinity when $p\to4$.
\par
However, the energy-to-Strichartz estimate \eqref{2.es} is problematic in the critical case $p=4$. To the best of our knowledge it is proved for the periodic boundary conditions only and its validity for other boundary conditions is an open problem, see \cite{SZ}. In the critical case, the global existence of SS-solutions is verified using the so-called Pohozhaev-Morawetz inequality and related non-concentration estimates, see \cite{Grill,SS,Sogge08}. Note that, in contrast to the subcritical case, these arguments give only the global existence of an SS-solution without any quantitative bounds on its Strichartz norm. In particular, this norm may a priori grow uncontrollably as $t\to\infty$ and this prevents the applications to the attractor theory. Up to the moment, this problem is overcome in the autonomous case only, see \cite{KSZ}. By this reason, we will consider the attractor theory for equation \eqref{2.wave} for the subcritical case $p<4$ only (see \cite{Gin} for the uniqueness of energy solutions for $p<4$ in the whole space).
\par
Finally, the validity of energy identity for SS-solutions (and $p\le4$) is straightforward, see e.g., \cite{KSZ}. Note that the uniqueness of energy solutions for $p>2$ and SS-solutions for $p>4$ are not known yet.
\end{rem}
\begin{cor}\label{Cor2.pro} Let $0<p\le 4$ and the functions $\gamma$ and $f$ satisfy the above stated assumptions. Then equation \eqref{2.wave} generates a dynamical process $U_\gamma(t,\tau)$, $t\ge\tau$, in the energy space $E$ via
\begin{equation}\label{2.pro}
U_\gamma(t,\tau)\xi_\tau:=\xi_u(t),\ \ t\ge\tau,\  \tau\in\R,\ \xi_\tau\in E,
\end{equation}
where $\xi_u(t)=\{u(t),\Dt u(t)\}$ and $u(t)$ is a SS-solution of \eqref{2.wave} which corresponds to the initial data $\xi_u\big|_{t=\tau}=\xi_\tau$.
\end{cor}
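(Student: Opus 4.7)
The statement is a packaging corollary: it collects the existence, uniqueness, and time-translation properties of Shatah--Struwe solutions into the statement that \eqref{2.wave} generates a two-parameter family of solution operators. Accordingly, my plan has three short steps.

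First, I would quote part (3) of the preceding proposition: for $0<p\le 4$, $g\in L^2(\Omega)$, and every initial datum $\xi_\tau=\{u_\tau,u'_\tau\}\in E$, there exists a \emph{unique} SS--solution $u$ of \eqref{2.wave} on $[\tau,\infty)$ with $\xi_u(\tau)=\xi_\tau$. This makes the formula $U_\gamma(t,\tau)\xi_\tau:=\xi_u(t)$ an unambiguously defined map from $E$ into $E$ for every pair $t\ge\tau$ in $\R$, since the energy-space inclusion $\xi_u(t)\in E$ is part of the very definition of an energy/SS--solution. (Recall that for $p\le 4$, by Sobolev embedding the $L^{p+2}$-component of $E$ is controlled by the $H^1_0$-component, so the truncated energy norm $\|\xi_u(t)\|_E^2$ and the full $\mathcal E$-energy are equivalent along trajectories.)

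Second, I would verify the algebraic process identities. The equality $U_\gamma(\tau,\tau)=\operatorname{Id}_E$ is immediate from $\xi_u(\tau)=\xi_\tau$. For the cocycle identity
\begin{equation*}
U_\gamma(t,s)\circ U_\gamma(s,\tau)=U_\gamma(t,\tau),\quad t\ge s\ge\tau,
\end{equation*}
I take the SS--solution $u$ on $[\tau,\infty)$ starting from $\xi_\tau$, observe that its restriction $u|_{[s,\infty)}$ is an SS--solution of \eqref{2.wave} on $[s,\infty)$ with initial datum $\xi_u(s)$, and invoke uniqueness of SS--solutions (again part (3) of the proposition) to identify this restriction with the SS--solution issuing from $\xi_u(s)$ at time $s$. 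Evaluating at $t$ gives the identity.

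Third, I would record the continuity of the map $t\mapsto U_\gamma(t,\tau)\xi_\tau$ in $E$: the weak continuity $\xi_u\in C([\tau,\infty);E_w)$ built into the definition, together with the energy identity
\begin{equation*}
\tfrac{d}{dt}\|\xi_u(t)\|_{\mathcal E}^2=-\gamma(t)\|\Dt u(t)\|_{L^2}^2
\end{equation*}
from part (3) and $\gamma\in L^1_b(\R)$, yields absolute continuity of $t\mapsto\|\xi_u(t)\|_{\mathcal E}^2$ and hence strong continuity of $\xi_u$ in $E$. The main (and only real) obstacle in such a corollary is global well-posedness together with uniqueness of SS--solutions, which is exactly what the preceding proposition supplies; once those are available, the process structure is an essentially formal consequence of uniqueness plus concatenation, so no further nontrivial estimate is needed here.
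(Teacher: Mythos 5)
Your proposal is correct and follows exactly the route the paper intends: the corollary is stated without an explicit proof precisely because it is an immediate consequence of part (3) of the preceding proposition (global existence and uniqueness of SS-solutions for $0<p\le 4$), with the cocycle identity following from uniqueness plus restriction of solutions, just as you argue. Your third step on strong time-continuity is a harmless extra that the paper does not require for the definition of a dynamical process.
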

We are now ready to state and prove the main result of this section.
\begin{theorem}\label{Th2.main} Let the functions $f$, $\gamma$ and $g$ satisfy the assumptions stated in the beginning of this section and let $p\le4$. Then, the following estimate holds for the SS-solutions of problem \eqref{2.wave}:
\begin{equation}\label{2.dis}
\|\xi_u(t)\|_{\mathcal E}\le C(1+\|\xi_u(\tau)\|_{\mathcal E})e^{-\alpha(t-\tau)}+C(1+\|g\|_{L^2}),\ \ t\ge\tau
\end{equation}
for some positive constants $C$ and $\alpha$ which are independent of $\tau\in\R$, $t\ge \tau$ and $\xi_u(\tau)$.
\end{theorem}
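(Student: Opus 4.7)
The plan is to mimic the ODE argument of Proposition \ref{Prop1.dis}, that is, to test equation \eqref{2.wave} against the multiplier $\Dt u+\kappa(t) u$ for a suitably chosen weight $\kappa(t)\approx\tfrac{2}{p+4}\gamma(t)$, but with two modifications: (i) one has to extract a coefficient of the form $\tfrac{2(p+2)}{p+4}\gamma(t)$ in front of the full energy $\mathcal E(t):=\tfrac12\|\Dt u\|_{L^2}^2+\tfrac12\|\Nx u\|_{L^2}^2+(F(u),1)-(g,u)$ so that assumption \eqref{2.Negative} kicks in; (ii) since $\gamma$ is only translation-compact in $L^1_b(\R)$, one cannot differentiate it, so the $\kappa(t)$ used in the multiplier must be a smooth approximation of $\tfrac{2}{p+4}\gamma$.

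First I would fix a small $\delta>0$ and choose $\kappa_\delta\in C^\infty_b(\R)$ with $\|\kappa_\delta-\tfrac{2}{p+4}\gamma\|_{L^1_b(\R)}\le\delta$ (possible by \eqref{2.trc}, and one can also assume $\kappa_\delta$ and $\kappa_\delta'$ are uniformly bounded, the bound depending on $\delta$). Multiplying \eqref{2.wave} by $\Dt u+\kappa_\delta(t)u$ in $L^2(\Omega)$ and integrating by parts in $t$ yields, after using $f(u)u=(p+2)F(u)+h(u)$ with $|h(u)|\le \beta F(u)+C_\beta$ (this follows from \eqref{2.f} since the leading part $u|u|^p$ satisfies the identity exactly and $f_0$ is subordinate), an inequality of the schematic form
\begin{equation}\label{2.scheme}
\frac{d}{dt}\mathcal F_\delta(t)+\Bigl(\tfrac{2(p+2)}{p+4}\gamma(t)-\beta-r_\delta(t)\Bigr)\mathcal F_\delta(t)\le C_{\beta,\delta}(1+\|g\|_{L^2}^2),
\end{equation}
where $\mathcal F_\delta(t):=\mathcal E(t)+\kappa_\delta(t)(\Dt u(t),u(t))$ is equivalent to $\mathcal E(t)+1$ (provided $\delta$ and the $L^\infty$ bound of $\kappa_\delta$ are controlled via Young and Poincar\'e inequalities), and the remainder satisfies $\|r_\delta\|_{L^1_b}\le C\delta$.

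Next I would integrate \eqref{2.scheme} by the Gronwall-type formula analogous to \eqref{1.gron}. The dissipativity hypothesis \eqref{2.Negative}, written in the form
\begin{equation*}
\frac{2(p+2)}{p+4}\int_s^t\gamma(\sigma)\,d\sigma\ge 2\alpha_0(t-s)-C_0,\qquad t\ge s,
\end{equation*}
combined with the bound $\int_s^t r_\delta(\sigma)\,d\sigma\le C\delta(t-s)+C$ following from $r_\delta\in L^1_b$, gives
\begin{equation*}
\int_s^t\Bigl(\tfrac{2(p+2)}{p+4}\gamma(\sigma)-\beta-r_\delta(\sigma)\Bigr)d\sigma\ge \alpha(t-s)-C_*,
\end{equation*}
once $\beta$ and $\delta$ are fixed small enough depending on $\alpha_0$. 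Inserting this into the integrated version of \eqref{2.scheme} produces $\mathcal F_\delta(t)\le C\mathcal F_\delta(\tau)e^{-\alpha(t-\tau)}+C_*(1+\|g\|_{L^2}^2)$, and returning to $\mathcal E$ using the equivalence $\mathcal F_\delta\sim\mathcal E+1$ yields \eqref{2.dis}.

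The main technical obstacle is the first step: justifying the multiplier $\Dt u+\kappa_\delta u$ for SS-solutions and showing that all error terms produced by the lower-order piece $f_0$ of the nonlinearity and by the gap between $\kappa_\delta$ and $\tfrac{2}{p+4}\gamma$ can be absorbed into $\beta\mathcal F_\delta+C_\beta$. The cancellation $f(u)u-(p+2)F(u)=0$ that makes the ODE argument work is only exact for the pure power, so one must use the superlinearity $p>0$ together with Young's inequality to subsume the $f_0$-contributions; and the $L^1_b$ (rather than $L^\infty$) control of $\gamma$ forces one to work with the smoothed $\kappa_\delta$ and to carry the error $r_\delta$ as an $L^1_b$-small perturbation inside the Gronwall integral, which is the reason translation-compactness \eqref{2.trc} is really used.
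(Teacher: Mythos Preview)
Your scheme contains a genuine gap: the single multiplier $\kappa_\delta\approx\tfrac{2}{p+4}\gamma$ does \emph{not} produce the coefficient $\tfrac{2(p+2)}{p+4}\gamma(t)$ in front of the full energy in the PDE setting. That coefficient is what the ODE multiplier in Proposition~\ref{Prop1.dis} yields, but there the potential energy is the single homogeneous term $\tfrac{1}{p+2}|y|^{p+2}$. For the PDE the potential energy is $E_p=\tfrac12\|\Nx u\|^2_{L^2}+\tfrac{1}{p+2}\|u\|^{p+2}_{L^{p+2}}$, and testing against $\kappa u$ produces $\kappa\bigl(\|\Nx u\|^2_{L^2}+\|u\|^{p+2}_{L^{p+2}}\bigr)$; the two pieces scale differently and one only has the two-sided bound $2E_p\le\|\Nx u\|^2_{L^2}+\|u\|^{p+2}_{L^{p+2}}\le(p+2)E_p$ (this is exactly the point of the heuristic discussion around \eqref{1.bounds}--\eqref{1.ebpde}). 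With $\kappa=\tfrac{2}{p+4}\gamma$ the kinetic coefficient is indeed $\tfrac{2(p+2)}{p+4}\gamma$, but the gradient term only gets $\tfrac{4}{p+4}\gamma$, so the best lower bound you can extract for the full dissipation term is
\[
\Bigl(\tfrac{4}{p+4}\gamma_+(t)-\tfrac{2(p+2)}{p+4}\gamma_-(t)\Bigr)E,
\]
and the positivity of its mean is a \emph{strictly stronger} requirement than \eqref{2.Negative} (since $\tfrac{2}{p+4}<\tfrac12$ for $p>0$). Your rewriting of \eqref{2.Negative} as $\tfrac{2(p+2)}{p+4}\!\int\!\gamma\ge 2\alpha_0(t-s)-C_0$ is a consequence of \eqref{2.Negative}, but it is useless here because you never actually obtain that coefficient from the equation.

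The fix, which is the key new idea in the paper's proof, is to use an \emph{asymmetric} multiplier that treats the positive and negative parts of $\gamma$ differently: one tests against $\bigl(\tfrac12\bar\gamma_+(t)-\tfrac{2}{p+4}\bar\gamma_-(t)\bigr)u$ (with $\bar\gamma$ a $C^1_b$ approximant of $\gamma$). The coefficient $\tfrac12$ on the positive part is matched to the lower bound $\|\Nx u\|^2+\|u\|^{p+2}\ge 2E_p$, and the coefficient $\tfrac{2}{p+4}$ on the negative part is matched to the upper bound $\|\Nx u\|^2+\|u\|^{p+2}\le(p+2)E_p$; together with $\bar\gamma_+\bar\gamma_-\equiv0$ this yields precisely
\[
\Bigl(\tfrac12\bar\gamma_+(t)-\tfrac{2}{p+4}\bar\gamma_-(t)\Bigr)\bigl(\|\Nx u\|^2_{L^2}+\|u\|^{p+2}_{L^{p+2}}\bigr)\ge 2\Bigl(\tfrac12\bar\gamma_+(t)-\tfrac{p+2}{p+4}\bar\gamma_-(t)\Bigr)E_p,
\]
and the same coefficient appears in front of $E_k$, so one recovers exactly the quantity in \eqref{2.Negative}. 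Apart from this choice of multiplier, the remaining steps (smoothing $\gamma$ via translation-compactness, absorbing the $f_0$ and $\tilde\gamma$ errors into a small $L^1_b$ remainder, Gronwall) are as you outline.
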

\begin{proof} We first utilize the fact that $\gamma$ is translation compact in $L^1_b(\R)$, so, for every $\eb>0$, we may find a function $\bar\gamma\in C^1_b(\R)$ such that
\begin{equation}\label{2.geb}
\|\gamma-\bar\gamma\|_{L^1_b}\le\eb.
\end{equation}
This function can be constructed using the standard mollifiers:
\begin{equation}\label{2.mol}
\bar\gamma(t):=\int_0^\infty k_\nu(s)\gamma(t-s)\,ds,\ \ k_\nu(s):=\nu^{-1}k(s/\nu)
\end{equation}
with $k\in C_0^\infty(\R_+)$ such that $\int_0^\infty k(s)\,ds=1$ and $\nu=\nu(\eb)$ is small enough.
\par
Note also that the function $\bar\gamma$ also satisfies \eqref{2.Negative} if $\eb>0$ is small enough.
Moreover, since $p\le4$, the SS-solution satisfies the energy identity
\begin{multline}\label{2.enn}
\frac d{dt}\(\frac12\|\Dt u\|^2_{L^2}+\frac12\|\Nx u\|^2_{L^2}+\frac1{p+2}\|u\|^{p+2}_{L^{p+2}}+(F_0(u(t)),1)\)+\\+\bar\gamma(t)\|\Dt u\|^2_{L^2}=-\tilde\gamma(t)\|\Dt u\|^2_{L^2},
\end{multline}
where $\tilde\gamma(t):=\gamma(t)-\bar\gamma(t)$ and $F_0(u):=\int_0^uf_0(v)\,dv$. At the next step, we multiply equation \eqref{2.wave} by $\frac12\bar\gamma_+(t)u-\frac2{p+4}\bar\gamma_-(t)u$ which gives
\begin{multline}\label{2.au}
\frac d{dt}\((\frac12\bar\gamma_+(t)-\frac2{p+4}\bar\gamma_-(t))(\Dt u,u)\)-\\-\(\frac12\bar\gamma_+(t)-\frac2{p+4}\bar\gamma_-(t)\)\|\Dt u\|^2_{L^2}+\\+\(\frac12\bar\gamma_+(t)-\frac2{p+4}\bar\gamma_-(t)\)\(\|\Nx u\|^2_{L^2}+\|u\|^{p+2}_{L^{p+2}}\)=\\=\(\frac12\bar\gamma_+(t)-\frac2{p+4}\bar\gamma_-(t)\)\((g,u)-(f_0(u),u)-\gamma(t)(\Dt u,u)\)+\\+\(\frac12\bar\gamma_+'(t)-\frac2{p+4}\bar\gamma_-'(t)\)(\Dt u,u):=H_u(t).
\end{multline}
Analogously to \eqref{1.bounds}, we have
\begin{multline*}
2\(\frac12\|\Nx u\|^2_{L^2}+\frac1{p+2}\|u\|^{p+2}_{L^{p+2}}\)\le\\\le\|\Nx u\|^2_{L^2}+\|u\|^{p+2}_{L^{p+2}}\le(p+2)\(\frac12\|\Nx u\|^2_{L^2}+\frac1{p+2}\|u\|^{p+2}_{L^{p+2}}\)
\end{multline*}
and using that $\bar\gamma_\pm(t)\ge0$ together with $\bar\gamma_+(t)\bar\gamma_-(t)\equiv0$, we get
\begin{multline*}
\(\frac12\bar\gamma_+(t)-\frac2{p+4}\bar\gamma_-(t)\)\(\|\Nx u\|^2_{L^2}+\|u\|^{p+2}_{L^{p+2}}\)\ge\\\ge 2\(\frac12\bar\gamma_+(t)-\frac{p+2}{p+4}\bar\gamma_-(t)\)E_p(t)
\end{multline*}
with $E_p(t):=\frac12\|\Nx u\|^2_{L^2}+\frac1{p+2}\|u\|^{p+2}_{L^{p+2}}$. Taking a sum of \eqref{2.au} and \eqref{2.enn} and using that $\bar\gamma(t)=\bar\gamma_+(t)-\gamma_-(t)$, we arrive at
\begin{multline}\label{2.ineq}
\frac d{dt}\mathcal E_u(t)+\\+2\(\frac12\bar\gamma_+(t)-\frac{p+2}{p+4}\bar\gamma_-(t)\)\(\frac12\|\Dt u\|^2_{L^2}+\frac12\|\Nx u\|^2_{L^2}+\frac1{p+2}\|u\|^{p+2}_{L^{p+2}}\)\le\\\le H_u(t)-\tilde\gamma(t)\|\Dt u\|^2_{L^2},
\end{multline}
where
\begin{multline*}
\mathcal E_u(t):=\frac12\|\Dt u\|^2_{L^2}+\frac12\|\Nx u\|^2_{L^2}+\frac1{p+2}\|u\|^{p+2}_{L^{p+2}}+\\+(F_0(u(t)),1)+
(\frac12\bar\gamma_+(t)-\frac2{p+4}\bar\gamma_-(t))(\Dt u,u).
\end{multline*}
Since $\bar\gamma\in C^1_b(\R)$, $p>0$ and $F_0(u)$ is subordinated by $u|u|^p$, we have
\begin{multline}
\frac12\(\frac12\|\Dt u\|^2_{L^2}+\frac12\|\Nx u\|^2_{L^2}+\frac1{p+2}\|u\|^{p+2}_{L^{p+2}}\)-C_\eb\le\mathcal E_u\le\\\le 2\(\frac12\|\Dt u\|^2_{L^2}+\frac12\|\Nx u\|^2_{L^2}+\frac1{p+2}\|u\|^{p+2}_{L^{p+2}}\)+C_\eb.
\end{multline}
Analogously, using again that $p>0$ and that $f_0$ is subordinated to $u|u|^p$, we may estimate the term $H_u(t)$ as follows:
\begin{multline}
H_u(t)\le C_\eb\(|(|g|,|u|)+|(f_0(u)),u)|+(1+|\gamma(t))|(|\Dt u|,|u|)\)\le\\\le \kappa\(|\gamma(t)|+1\)\mathcal E_u(t)+C_{\kappa,\eb}\(1+|\gamma(t)|+\|g\|^2_{L^2}\),
\end{multline}
where $\eb>0$ and $\kappa>0$ can be taken arbitrarily small. Inserting the obtained estimates to \eqref{2.ineq}, we finally arrive at
\begin{equation}\label{2.good}
\frac d{dt}\mathcal E(t)+\beta_\eb(t)\mathcal E(t)\le C_{\kappa,\eb}\(1+\|g\|^2_{L^2}+|\gamma(t)|\),
\end{equation}
where
\begin{equation}\label{2.beta}
\beta_\eb(t):=2\(\frac12\bar\gamma_+(t)-\frac{p+2}{p+4}\bar\gamma_-(t)-|\tilde\gamma(t)|-
\kappa|\gamma(t)|-\kappa\).
\end{equation}
Due to the fact that $\gamma\in L^1_b(\R)$ satisfies \eqref{2.Negative} and $\tilde \gamma$ is of order $\eb$ (see \eqref{2.geb}), we may fix positive constants $\eb$ and $\kappa$ in such a way that assumption \eqref{2.Negative} will be satisfied for the for $\beta_\eb(t)$ as well. Namely,
$$
\liminf_{T\to\infty}\frac1T\inf_{t\in\R}\int_{t}^{t+T}\beta_\eb(s)\,ds>0.
$$
Analogously to the proof of Proposition \ref{Prop1.dis}, this gives
\begin{equation}\label{2.betag}
\int_s^t\beta_\eb(\tau)\,d\tau\ge 2\alpha(t-\tau)+C,\ \ t\ge s,
\end{equation}
where positive constants $\alpha$ and $C$ are independent of $t$ and $\tau$. Integrating inequality \eqref{2.good} in time, we get
\begin{equation}\label{2.Gron}
\mathcal E(t)\le \mathcal E(\tau)e^{-\int_\tau^t\beta_\eb(s)\,ds}+C\int_\tau^t(1+\|g\|^2+|\gamma(s)|)e^{-\int_s^t\beta_\eb(l)\,dl}\,ds
\end{equation}
which together with \eqref{2.betag} gives the desired estimate \eqref{2.dis} and finishes the proof of the theorem.
\end{proof}
\begin{rem} As we can see from the proof, neither the fact that $\Omega$ is three-dimensional, nor the conditions on the growth rate $p$ are essentially used in the derivation of the dissipative estimate. Actually, these assumptions are posed in order to have global well-posedness and the validity of the energy identity only. Thus, both of them can be removed, but in this case, we will be unable to verify the dissipative estimate for {\it all} weak energy (or Strichartz) solutions and may only claim that for every initial data from the energy space, there exists a solution which satisfies the above dissipative estimate.
\end{rem}
\begin{cor}\label{Cor2.abs} Let the assumptions of Theorem \ref{Th2.main} hold and let, in addition, $p<4$. Then, in addition to \eqref{2.dis}, we also have the dissipativity of the Strichartz norm. Namely, for any SS-solution $u(t)$ of equation \eqref{2.wave}, the following estimate holds:
\begin{equation}\label{2.stdis}
\int_{t-1}^t\|u(s)\|^4_{L^{12}}\,ds\le Q(\|\xi_u(\tau)\|_E)e^{-\alpha(t-\tau)}+Q(\|g\|_{L^2}),\ \ t\ge\tau+1
\end{equation}
for some positive $\alpha$ and monotone increasing function $Q$.
\end{cor}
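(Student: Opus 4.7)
The estimate follows by composing the energy-to-Strichartz estimate \eqref{2.es} with the dissipative bound \eqref{2.dis} of Theorem \ref{Th2.main}, using crucially that in the subcritical regime $p<4$ the monotone function $Q$ in \eqref{2.es} can be taken to be a polynomial by \eqref{2.polc}. First, I would apply \eqref{2.es} on the window $[t-1,t]$, which yields
\[
\int_{t-1}^t\|u(s)\|^4_{L^{12}}\,ds = \|u\|^4_{L^4(t-1,t;L^{12})} \le \bigl(Q(\|\xi_u(t-1)\|_E)+Q(\|g\|_{L^2})\bigr)^4.
\]

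Next I would feed the dissipative estimate into the right-hand side. Since $p<4$, the Sobolev embedding $H^1\subset L^6\subset L^{p+2}$ (together with the subordination of $F_0$ to $u|u|^p$) shows that the truncated and full energy norms are equivalent up to polynomial corrections, that is, $\|\xi_u(\tau)\|_{\mathcal E}\le R(\|\xi_u(\tau)\|_E)$ for some monotone polynomial $R$. Applying \eqref{2.dis} at time $t-1$ and absorbing the harmless factor $e^\alpha$ then gives a bound of the form
\[
\|\xi_u(t-1)\|_E \le A(\|\xi_u(\tau)\|_E)\,e^{-\alpha(t-\tau)} + B(\|g\|_{L^2}),
\]
where $A$ and $B$ are monotone polynomials of a single real argument.

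Finally, I would plug this estimate into the polynomial control on $Q$ from \eqref{2.polc}. Two applications of the elementary inequality $(x+y)^N\le 2^{N-1}(x^N+y^N)$ — first to split $Q$ of the sum above and then to raise the resulting expression to the fourth power — and absorption of constants into fresh monotone polynomials produce
\[
\int_{t-1}^t\|u(s)\|^4_{L^{12}}\,ds \le \tilde Q(\|\xi_u(\tau)\|_E)\,e^{-\alpha'(t-\tau)} + \tilde Q(\|g\|_{L^2})
\]
with $\alpha':=4N_p\alpha>0$. Renaming $\alpha'\leadsto\alpha$ and $\tilde Q\leadsto Q$ gives the desired \eqref{2.stdis}.

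I do not foresee any genuine obstacle, since the analytic work has been done in Theorem \ref{Th2.main} and in the recalled estimate \eqref{2.es}. The only point one must be attentive to is that the argument genuinely relies on the polynomial growth rate $N_p$ of $Q$ in the subcritical case: a merely monotone $Q$, as expected in the critical case $p=4$, would not combine cleanly with the exponentially decaying energy bound, which matches the fact that \eqref{2.stdis} is stated only for $p<4$.
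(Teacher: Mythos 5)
Your proof is correct and follows exactly the paper's own (one-line) argument: compose the energy-to-Strichartz estimate \eqref{2.es} on the window $[t-1,t]$ with the dissipative bound \eqref{2.dis}, using the equivalence of $\|\cdot\|_E$ and $\|\cdot\|_{\mathcal E}$ up to monotone functions. One minor caveat on your closing remark: the restriction $p<4$ is needed primarily because \eqref{2.es} itself is only known in the subcritical case, not because $Q$ must be polynomial --- a merely monotone $Q$ would still combine with the exponentially decaying energy bound after a short case analysis in $t-\tau$ (splitting according to whether $e^{-\alpha(t-\tau)}$ has already beaten the initial energy).
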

Indeed, estimate \eqref{2.stdis} follows from  \eqref{2.dis} and the energy-to-Strichartz estimate \eqref{2.es}.
\begin{cor}\label{Cor2.lip} Let the assumptions of Corollary \ref{Cor2.abs} hold. Then the dynamical process $U(t,\tau)$ is Lipschitz continuous uniformly on bounded sets of the energy space $E$. Namely, for any two SS-solutions $u(t)$ and $v(t)$ of equation \eqref{2.wave}, the following estimate holds:
\begin{multline}\label{2.lip}
\|\Dt u(t)-\Dt v(t)\|^2_{L^2}+\|\Nx u(t)-\Nx v(t)\|^2_{L^2}\le\\\le Ce^{L(t-\tau)}\(\|\Dt u(\tau)-\Dt v(\tau)\|^2_{L^2}+\|\Nx u(\tau)-\Nx v(\tau)\|^2_{L^2}\),\ \ t\ge\tau,
\end{multline}
where the constant $L$ depends only on $\gamma$, $f$ and $g$ and the constant $C$ depends also on the energy norms of $\xi_u(\tau)$ and $\xi_v(\tau)$.
\end{cor}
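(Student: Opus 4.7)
The plan is to introduce the difference $w:=u-v$, which satisfies the inhomogeneous linear wave equation
\begin{equation*}
\Dt^2 w+\gamma(t)\Dt w-\Dx w=f(v)-f(u),\qquad w\big|_{\partial\Omega}=0,
\end{equation*}
and to derive a Gronwall-type differential inequality for the truncated energy
$\Phi(t):=\tfrac12\|\Dt w(t)\|^2_{L^2}+\tfrac12\|\Nx w(t)\|^2_{L^2}$.
Formally multiplying the equation above by $\Dt w$ and integrating over $\Omega$ I would get
\begin{equation*}
\frac{d}{dt}\Phi(t)=-\gamma(t)\|\Dt w(t)\|^2_{L^2}+\bigl(f(v)-f(u),\Dt w(t)\bigr).
\end{equation*}
The step deserving care is the control of the nonlinear term. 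From \eqref{2.f} one has $|f'(u)|\le C(1+|u|^p)$, hence by the mean value theorem $|f(u)-f(v)|\le C(1+|u|^p+|v|^p)|w|$. Applying H\"older with exponents $(3,6,2)$, using the three-dimensional Sobolev embedding $H^1_0(\Omega)\hookrightarrow L^6(\Omega)$, and noting that $p\le 4$ together with the boundedness of $\Omega$ gives $\|\cdot\|_{L^{3p}}\le C\|\cdot\|_{L^{12}}$, I obtain
\begin{equation*}
|(f(v)-f(u),\Dt w)|\le C\bigl(1+\|u\|^p_{L^{12}}+\|v\|^p_{L^{12}}\bigr)\|\Nx w\|_{L^2}\|\Dt w\|_{L^2}\le \psi(t)\Phi(t),
\end{equation*}
with $\psi(t):=C\bigl(1+|\gamma(t)|+\|u(t)\|^p_{L^{12}}+\|v(t)\|^p_{L^{12}}\bigr)$. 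Combined with the energy identity above this yields $\Phi'(t)\le\psi(t)\Phi(t)$, so the standard Gronwall inequality gives $\Phi(t)\le\Phi(\tau)\exp\bigl(\int_\tau^t\psi(s)\,ds\bigr)$.

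The remaining task is to bound $\int_\tau^t\psi(s)\,ds$ by $L(t-\tau)+B$, where $L$ depends only on $\gamma$, $f$, $g$ and $B$ depends on $\|\xi_u(\tau)\|_E$ and $\|\xi_v(\tau)\|_E$. The $|\gamma|$ contribution is at most $\|\gamma\|_{L^1_b}(t-\tau+1)$ by \eqref{2.ul}. Since $p\le 4$, the elementary bound $\|u\|^p_{L^{12}}\le 1+\|u\|^4_{L^{12}}$ reduces the Strichartz contribution to estimating $\int_\tau^t\|u\|^4_{L^{12}}\,ds$. Splitting $[\tau,t]$ into unit subintervals and applying Corollary \ref{Cor2.abs} on each yields, after summing the resulting geometric series in the $e^{-\alpha k}$ factors,
\begin{equation*}
\int_\tau^t\|u(s)\|^4_{L^{12}}\,ds\le \frac{Q(\|\xi_u(\tau)\|_E)}{1-e^{-\alpha}}+Q(\|g\|_{L^2})(t-\tau+1),
\end{equation*}
and analogously for $v$. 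Putting everything together produces $\int_\tau^t\psi\,ds\le L(t-\tau)+B$ with $L=L(\gamma,f,g)$ and $B=B(\|\xi_u(\tau)\|_E,\|\xi_v(\tau)\|_E)$; setting $C:=e^B$ gives the claimed estimate \eqref{2.lip}.

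The one point requiring attention, rather than any deep obstacle, is the clean separation between the exponential rate $L$ (independent of initial data) and the data-dependent multiplicative constant $C$. This separation is achieved exactly by the dissipativity of the Strichartz norm established in Corollary \ref{Cor2.abs}: without the exponential decay in \eqref{2.stdis}, the integral $\int_\tau^t\|u\|^4_{L^{12}}\,ds$ would contribute a slope dependent on $\|\xi_u(\tau)\|_E$ and the resulting exponent $L$ would no longer be determined solely by $\gamma$, $f$, $g$.
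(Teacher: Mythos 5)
Your proposal is correct and follows essentially the same route as the paper: write the equation for $w=u-v$, multiply by $\Dt w$, bound the nonlinear term via H\"older with exponents $(3,6,2)$ and the embeddings $H^1_0\subset L^6$, $L^{12}\subset L^{3p}$, and then close the Gronwall argument using $\gamma\in L^1_b$ and the Strichartz dissipativity \eqref{2.stdis}. The only cosmetic difference is that the paper phrases the nonlinear difference as $l(t)w$ with $l(t)=\int_0^1 f'(su+(1-s)v)\,ds$ rather than invoking the mean value theorem, and your final paragraph usefully makes explicit the splitting into unit intervals that the paper leaves implicit.
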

\begin{proof} Let $w(t):=u(t)-v(t)$. Then this function solves
\begin{equation}\label{2.minus}
\Dt^2 w+\gamma(t)\Dt w-\Dx w+l(t)w=0, \ \ \xi_w\big|_{t=\tau}=\xi_u(\tau)-\xi_v(\tau),
\end{equation}
where $l(t):=\int_0^1f'(su(t)+(1-s)v(t))\,ds$. Since both $u$ and $v$ satisfy energy identity, we may multiply the above equation by $\Dt w$ and integrate over $x\in\Omega$ to get
$$
\frac12\frac d{dt}\(\|\Dt w\|^2_{L^2}+\|\Nx w\|^2_{L^2}\)=-\gamma(t)\|\Dt w\|^2_{L^2}-(l(t)w,\Dt w)
$$
and we only need to estimate the last term in the right-hand side of this equality. Since $f'(u)$ grows as $|u|^p$ with $p<4$, we have
\begin{multline}
(l(t)w,\Dt w)\le\|l(t)\|_{L^3}\|w\|_{L^6}\|\Dt w\|_{L^2}\le\\\le C(1+\|u(t)\|^4_{L^{12}}+\|v\|^4_{L^{12}})\(\|\Dt w\|^2+\|\Nx w\|^2\)
\end{multline}
and, therefore,
\begin{multline}
\frac12\frac d{dt}\(\|\Dt w\|^2_{L^2}+\|\Nx w\|^2_{L^2}\)\le \\\le \(|\gamma(t)|+C(1+\|u(t)\|^4_{L^{12}}+\|v\|^4_{L^{12}})\)\(\|\Dt w\|^2_{L^2}+\|\Nx w\|^2_{L^2}\).
\end{multline}
Integrating this inequality and using estimate \eqref{2.stdis} together with the fact that $\gamma\in L^1_b(\R)$, we arrive at \eqref{2.lip} and finish the proof of the corollary.
\end{proof}

\section{Asymptotic regularity}\label{s3}
In this section we will prove that, similarly to the case of constant dissipation rate, there is an asymptotic smoothing property for Strichartz solutions of \eqref{2.wave}. Namely, we will split a solution $u(t)$ of this equation in two parts
$$
u(t)=v(t)+w(t),
$$
where $w(t)$ is more regular and $v(t)$ is exponentially decaying. However, in contrast to the standard situation, we cannot take $v$ as a solution of the linear equation with $f=g=0$ (as explained in Section \ref{s1}, this solution may be unstable) and should proceed in a more delicate way. The key technical tool for this is the following proposition.
\begin{prop}\label{Prop3.lin} Let the function $\gamma(t)$ satisfy \eqref{2.ul}, \eqref{2.trc} and \eqref{2.Negative} with $p=0$. Assume also that $h\in L^1_{loc}(\R,L^2(\Omega))$. Then, there exists a positive constant $L=L(\gamma)$ such that the energy solution $v(t)$ of equation
\begin{equation}\label{3.Llin}
\Dt^2 v+\gamma(t)\Dt v-\Dx v+Lv=h,\ \ \xi_v\big|_{t=\tau}=\xi_\tau,\ \ v\big|_{\partial\Omega}=0
\end{equation}
satisfies the dissipative estimate
\begin{multline}\label{4.lindis}
\|v(t)\|^2_{H^1}+\|\Dt v(t)\|^2_{L^2}+\(\int_{\max\{\tau,t-1\}}^t\|u(s)\|^4_{L^{12}}\,ds\)^{1/2}\le\\\le C\(\|v(t)\|^2_{H^1}+\|\Dt v(t)\|^2_{L^2}\)e^{-\alpha(t-\tau)}+
C\(\int_\tau^te^{-\alpha(t-s)}\|h(s)\|_{L^2}\,ds\)^2,
\end{multline}
where the positive constants $C$ and $\alpha$ are independent of $t$, $\tau$ and $\xi_\tau$.
\end{prop}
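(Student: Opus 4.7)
The plan is to mimic the proof of Theorem \ref{Th2.main} specialised to the linear equation, regarding the mass term $Lv$ as the $p=0$ substitute for the $u|u|^p$ nonlinearity. Indeed, for $p=0$ the virial ratio $\frac{p+2}{p+4}$ collapses to $\frac12$, so condition \eqref{2.Negative} with $p=0$ is precisely the uniform averaged positivity of $\gamma$ itself, and the same multiplier scheme should deliver exponential decay of the energy together with Duhamel-type control of the inhomogeneity.

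First I would approximate $\gamma$ in $L^1_b(\R)$ by a smooth $\bar\gamma\in C^1_b(\R)$ via \eqref{2.mol} with $\|\gamma-\bar\gamma\|_{L^1_b}\le\eb$, chosen small enough that $\bar\gamma$ still satisfies \eqref{2.Negative} with $p=0$. Multiplying \eqref{3.Llin} by $\partial_t v$ yields the energy identity
$$
\frac{d}{dt}E(t)+\bar\gamma(t)\|\partial_t v(t)\|^2_{L^2}=(h(t),\partial_t v(t))-\tilde\gamma(t)\|\partial_t v(t)\|^2_{L^2},
$$
where $E(t):=\frac12\|\partial_t v\|^2_{L^2}+\frac12\|\nabla v\|^2_{L^2}+\frac{L}{2}\|v\|^2_{L^2}$ and $\tilde\gamma:=\gamma-\bar\gamma$. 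Multiplying \eqref{3.Llin} additionally by the $p=0$ analogue $\tfrac12\bar\gamma(t)v$ of the Theorem \ref{Th2.main} multiplier and adding the two identities, the key algebraic cancellation
$$
\tfrac12\bar\gamma\bigl(\|\nabla v\|^2_{L^2}+L\|v\|^2_{L^2}\bigr)+\bar\gamma\|\partial_t v\|^2_{L^2}-\tfrac12\bar\gamma\|\partial_t v\|^2_{L^2}=\bar\gamma(t)E(t)
$$
produces on the left-hand side the single dissipative term $\bar\gamma(t)E(t)$.

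Next I would set $\mathcal E_v(t):=E(t)+\tfrac12\bar\gamma(t)(\partial_t v,v)$ and fix $L=L(\gamma)$ large enough (essentially $L\ge\|\bar\gamma\|^2_{L^\infty}$) that Cauchy--Schwarz yields the coercivity equivalence $\tfrac12 E(t)\le\mathcal E_v(t)\le 2E(t)$. Absorbing the error terms $\tilde\gamma\|\partial_t v\|^2$, $\tfrac12\bar\gamma'(\partial_t v,v)$ and $\tfrac12\bar\gamma\gamma(\partial_t v,v)$ into $\mathcal E_v$ with a small prefactor $\kappa$, I expect to arrive at
$$
\frac{d}{dt}\mathcal E_v(t)+\beta_\eb(t)\mathcal E_v(t)\le C\|h(t)\|_{L^2}\sqrt{\mathcal E_v(t)},
$$
where $\beta_\eb(t):=\bar\gamma(t)-2|\tilde\gamma(t)|-2\kappa(1+|\gamma(t)|)$ still satisfies the uniform averaged positivity \eqref{2.Negative} provided $\eb$ and $\kappa$ are taken small. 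Precisely as in \eqref{2.betag}, this forces $\int_s^t\beta_\eb(\tau)\,d\tau\ge 2\alpha(t-s)+C$ uniformly in $s\le t$, and then a Gronwall argument applied to $\sqrt{\mathcal E_v(t)}$ (rather than $\mathcal E_v(t)$ itself, so as to preserve a linear dependence on $\|h\|_{L^1L^2}$) delivers
$$
\sqrt{E(t)}\le C\sqrt{E(\tau)}\,e^{-\alpha(t-\tau)}+C\int_\tau^t e^{-\alpha(t-s)}\|h(s)\|_{L^2}\,ds.
$$

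Finally, for the Strichartz component of \eqref{4.lindis} I would rewrite \eqref{3.Llin} as $\partial_t^2 v-\Delta v=\tilde h$ with $\tilde h:=h-\gamma(t)\partial_t v-Lv$ and apply the linear Strichartz estimate \eqref{2.s} on the interval $[\max\{\tau,t-1\},t]$. Since $\gamma\in L^1_b(\R)$, the energy bound just obtained controls $\|\gamma\partial_t v\|_{L^1L^2}+L\|v\|_{L^1L^2}$ on any unit interval by the right-hand side of the decay estimate, and squaring the resulting bound on $\|v\|_{L^4L^{12}}$ produces the required control of $\bigl(\int_{\max\{\tau,t-1\}}^t\|v(s)\|^4_{L^{12}}\,ds\bigr)^{1/2}$. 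The main obstacle I anticipate is this Gronwall step with $L^2$-forcing: the sign-indefinite cross term $\tfrac12\bar\gamma(\partial_t v,v)$ inside $\mathcal E_v$ must be tamed by choosing $L$ sufficiently large, and only by working with the square root of $\mathcal E_v$ does one avoid spurious factors of $\|h\|^{1/2}$ or $\|h\|^2$ that would destroy the Duhamel structure on the right-hand side of \eqref{4.lindis}.
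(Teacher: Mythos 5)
Your proposal is correct and follows essentially the same route as the paper: the same multiplier $\Dt v+\tfrac12\bar\gamma(t)v$ applied after smoothing $\gamma$ to $\bar\gamma\in C^1_b$, the same perturbed energy $\mathcal E_v(t)=E(t)+\tfrac12\bar\gamma(t)(\Dt v,v)$ made coercive by taking $L$ large, the same absorption of the error terms into $\kappa\mathcal E_v+2|\tilde\gamma|\mathcal E_v$, and the same integrated lower bound on $\beta_\eb$ to close the Gronwall argument, with the Strichartz part obtained afterwards from \eqref{2.s}. The only (equally valid) deviation is that you keep the forcing $h$ throughout and apply Gronwall to $\sqrt{\mathcal E_v}$, whereas the paper first treats $h=0$ and recovers the inhomogeneous case by the variation-of-constants formula.
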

\begin{proof} We first note that it is sufficient to prove \eqref{4.lindis} for the energy norm only. The estimate for the Strichartz norm will then follow from estimate \eqref{2.s}. Moreover, it is sufficient to verify estimate \eqref{4.lindis} for the case $h=0$ only, the general case follows then from the variation of constants formula.
\par
Analogously to the proof of Theorem \ref{Th2.main}, we split the function $\gamma(t):=\bar\gamma(t)+\tilde \gamma(t)$, where $\bar\gamma\in C^1_b(\R)$ and $\tilde\gamma$ satisfies \eqref{2.geb}. Multiplying equation \eqref{3.Llin} by $\Dt u+\frac12\bar\gamma u$, after the elementary transformations we get
\begin{equation}\label{3.grr}
\frac d{dt}\mathcal E_v(t)+\bar\gamma(t)\mathcal E_v(t)=H_v(t)
\end{equation}
where
$$
\mathcal E_v(t):=\frac12\|\Dt v\|^2_{L^2}+\frac12\|\Nx v\|^2_{L^2}+\frac L2\|v\|^2_{L^2}+\frac12\bar\gamma(t)(\Dt v,v)
$$
and
$$
H_v(t):=\frac12\bar\gamma'(t)(\Dt v,v)+\frac12\bar\gamma^2(t)(\Dt v,v)-\tilde\gamma(t)\|\Dt v\|^2_{L^2}.
$$
Since $\bar\gamma$ is globally bounded, we have
\begin{multline}
\frac12\(\frac12\|\Dt v\|^2_{L^2}+\frac12\|\Nx v\|^2_{L^2}+\frac L2\|v\|^2_{L^2}\)\le\\\le \mathcal E_v(t)\le 2\(\frac12\|\Dt v\|^2_{L^2}+\frac12\|\Nx v\|^2_{L^2}+\frac L2\|v\|^2_{L^2}\)
\end{multline}
if $L>L_0:=L_0(\eb)$. Analogously, since $\bar\gamma$ and $\bar\gamma'$ are bounded, for every $\kappa>0$, there exist $L_0=L_0(\kappa,\eb)$ such that
$$
H_v(t)\le \kappa\mathcal E_v(t)+2|\tilde\gamma(t)| \mathcal E_v(t)
$$
and therefore,
\begin{equation}\label{3.grrr}
\frac d{dt}\mathcal E_v(t)+(\bar\gamma(t)-\kappa-2|\tilde\gamma(t)|)\mathcal E_v(t)\le0
\end{equation}

if $L>L_0(\kappa,\eb)$. Fixing now $\kappa>0$ and $\eb>0$ small enough that
$$
\liminf_{T\to\infty}\frac1{T}\inf_{\tau\in\R}\int_\tau^{\tau+T}\(\bar\gamma(t)-\kappa-2|\tilde\gamma(t)|\)\,dt>0
$$
and integrating \eqref{3.grrr}, we get the desired estimate for the energy norm and finish the proof of the proposition.
\end{proof}
We are now ready to state and prove the main result of this section on the existence of a smooth exponentially absorbing set for the dynamical process associated with equation \eqref{2.wave}. Unfortunately, we cannot do this for the critical case of quintic nonlinearity $f$, so we have to impose the sub-criticality assumption:
\begin{equation}\label{3.subcrit}
p<4.
\end{equation}
\begin{theorem}\label{Th3.main} Let the assumptions of Theorem \ref{Th2.main} hold and let the nonlinearity $f$ be subcritical ($p<4$). Then, if $R=R(\gamma,f,g)$ is large enough, the closed ball $\mathcal B_R^1$ of radius $R$ in the space
$$
E^1:=[H^2(\Omega)\cap H^1_0(\Omega)]\times H^1_0(\Omega)
$$
is a uniformly attracting set for the dynamical process $U_\gamma(t,\tau)$ associated with equation \eqref{2.wave}. Namely, there are positive constant $\alpha$ and a monotone increasing function $Q$ such that, for any bounded set $B\subset E$,
\begin{equation}\label{3.expattr}
\operatorname{dist}_E(U_\gamma(t,\tau)B,\mathcal B^1_R)\le Q(\|B\|_E)e^{-\alpha(t-\tau)},\ \ t\ge\tau,
\end{equation}
holds uniformly with respect to $\tau\in\R$. Here and below
$$
\operatorname{dist}_E(U,V):=\sup_{u\in U}\inf_{v\in V}\|u-v\|_E
$$
is a non-symmetric Hausdorff distance between sets $U$ and $V$ in a space $E$.
\end{theorem}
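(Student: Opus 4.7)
The plan is to decompose every Shatah-Struwe solution as $u=v+w$, where $v$ carries the initial data and decays exponentially in the energy norm while $w$ lies in a bounded subset of the more regular space $E^1$, uniformly in $\tau\in\R$. Because the free linear flow may itself be unstable under our sign-changing damping (Proposition~\ref{Prop0.lin}), we cannot take $v$ to solve the equation with $f=g=0$. Instead, fix $L=L(\gamma)$ large enough for Proposition~\ref{Prop3.lin} to apply, let $v$ solve the mass-stabilized linear problem
\begin{equation*}
\Dt^2 v+\gamma(t)\Dt v-\Dx v+Lv=0,\qquad \xi_v\big|_{t=\tau}=\xi_u(\tau),\qquad v\big|_{\partial\Omega}=0,
\end{equation*}
and set $w:=u-v$, so that $w$ solves the same stabilized equation with zero Cauchy data and right-hand side $h(t):=Lu(t)+g-f(u(t))$.

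By Proposition~\ref{Prop3.lin} with $h=0$ applied to $v$, one has $\|\xi_v(t)\|_E\le Ce^{-\alpha(t-\tau)}\|\xi_u(\tau)\|_E$ uniformly in $\tau\in\R$, which produces the $Q(\|B\|_E)e^{-\alpha(t-\tau)}$ contribution to \eqref{3.expattr}. The remaining task is to show $\|\xi_w(t)\|_{E^1}\le R_0=R_0(\gamma,f,g)$ uniformly for $t\ge\tau$ and $\tau\in\R$; taking $R=R_0$ then gives the theorem.

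The uniform $E^1$-bound on $w$ will be obtained by bootstrap in the fractional scale $E^\sigma:=[H^{1+\sigma}\cap H^1_0]\times H^\sigma$, $\sigma\in[0,1]$. Proposition~\ref{Prop3.lin} extends to $E^\sigma$ by applying the Dirichlet spectral power $(-\Dx)^{\sigma/2}$ to the equation and re-running its multiplier argument, yielding
\begin{equation*}
\|\xi_w(t)\|_{E^\sigma}\le C\int_\tau^t e^{-\alpha(t-s)}\|h(s)\|_{H^\sigma}\,ds.
\end{equation*}
Assume inductively that $\xi_u$ is bounded in $L^\infty(\R;E^{\sigma_n})$ uniformly in $\tau$; the base case $\sigma_0=0$ is Theorem~\ref{Th2.main}. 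The linear contributions $Lu$ and $g$ to $h$ are harmless, and the nonlinearity is handled by a fractional chain estimate
\begin{equation*}
\|(-\Dx)^{\sigma_{n+1}/2}f(u)\|_{L^2}\le C\bigl(1+\|u\|_{L^{12}}^{p}\bigr)\|(-\Dx)^{\sigma_{n+1}/2}u\|_{L^r},
\end{equation*}
combined with the Strichartz dissipativity of Corollary~\ref{Cor2.abs} (which makes $\|u\|_{L^{12}}^p$ time-integrable because $p<4$) and the Sobolev embedding $H^{1+\sigma_n-\sigma_{n+1}}\hookrightarrow L^r$ with $r=12/(6-p)$. Balancing exponents shows that a strictly positive gain $\sigma_{n+1}-\sigma_n\le(4-p)/4$ is admissible at every step, so finitely many iterations raise $\sigma_n$ up to $1$ and yield the desired uniform $E^1$-bound on $\xi_w$.

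The main obstacle is the fractional Leibniz/chain estimate on the bounded smooth domain $\Omega$ with Dirichlet boundary conditions, where the Dirichlet spectral fractional Laplacian must be handled with care and the commutator with $f(u)$ controlled without producing uncontrolled boundary terms; one also has to check that Proposition~\ref{Prop3.lin} genuinely extends to $E^\sigma$, which is routine since $(-\Dx)^{\sigma/2}$ commutes with $\Dt$ and with multiplication by $\gamma(t)$. Once these ingredients are in place, the strict subcriticality $p<4$ leaves a positive margin at every bootstrap step and the iteration closes in a bounded number of steps depending only on $p$, which, combined with the exponential decay of $v$, proves \eqref{3.expattr}.
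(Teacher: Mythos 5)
Your overall strategy coincides with the paper's: decompose $u=v+w$ where $v$ solves the mass-stabilized linear equation of Proposition \ref{Prop3.lin} and carries the initial data, while $w$ has zero Cauchy data and is bootstrapped up the fractional scale $E^\sigma$ with gain $1-p/4$ per step. However, there are genuine gaps. The most immediate one is the treatment of $g$: it is only assumed to lie in $L^2(\Omega)$, so the right-hand side $h=Lu+g-f(u)$ of your $w$-equation cannot be estimated in $H^\sigma$ for any $\sigma>0$, and the very first bootstrap step already fails. Declaring $g$ ``harmless'' is not justified; the paper removes it from the right-hand side by subtracting the elliptic solution $G=(-\Dx)^{-1}g\in H^2(\Omega)\cap H^1_0(\Omega)$, writing $u=v+G+w$ with $\xi_v\big|_{t=\tau}=\{u(\tau)-G,\Dt u(\tau)\}$, after which the forcing becomes $-f(u)+L(u-G)$ and is exactly as smooth as the control on $u$ permits.

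Two further steps do not close as written. Your induction hypothesis that $\xi_u$ is bounded in $L^\infty(\R;E^{\sigma_n})$ is not delivered by the previous step: that step bounds only $\xi_w$ in $E^{\sigma_n}$, while $\xi_v$ merely decays in $E$, so $\xi_u=\xi_v+\xi_w$ is only exponentially attracted to a ball of $E^{\sigma_n}$, not contained in one. The paper closes this loop by restarting the argument from initial data already lying in the smoother attracting ball (so that $v$ itself can be measured in $E^{\sigma_n}$ by applying $(-\Dx)^{\sigma_n/2}$ to its equation) and then gluing the stages together via the transitivity of exponential attraction, which in turn needs the uniform Lipschitz continuity of Corollary \ref{Cor2.lip}; neither ingredient appears in your write-up. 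Finally, the fractional chain rule you invoke for $(-\Dx)^{\sigma/2}f(u)$ on a bounded domain with the Dirichlet spectral Laplacian is precisely what you flag as ``the main obstacle'' and leave unproven, so the core estimate of the bootstrap is missing. The paper avoids it altogether: it estimates the integer derivative $\Nx f(u)=f'(u)\Nx u$ in $L^\kappa$ with $\frac1\kappa=\frac12+\frac p{12}$ using only the Strichartz bound on $\|u\|_{L^{12}}$, and then applies the Sobolev embedding $W^{1,\kappa}\subset H^{1-p/4}$; this elementary computation is where the gain $1-p/4$ actually comes from, and adopting it would repair this part of your argument.
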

\begin{proof} We will use the standard bootstrapping arguments together with transitivity of exponential attraction (see e.g.,\cite{MirZel}). Let $G=G(x)$ solve the linear elliptic problem
$$
-\Dx G=g,\ \ G\big|_{\partial\Omega}=0.
$$
Then, due to the elliptic regularity, $G\in H^2(\Omega)\cap H^1_0(\Omega)$ and
$$
\|G\|_{H^2}\le C\|g\|_{L^2}.
$$
Let $\xi_u(t):=U_\gamma(t,\tau)\xi_\tau$ be a solution of equation \eqref{2.wave}. Then, without loss of generality, we may assume that $\xi_\tau$ belongs to the uniformly absorbing ball $\mathcal B^0_R$ in the energy space $E$. Indeed, such a ball exists due to estimate \eqref{2.dis}. Thus, using also \eqref{2.stdis}, we may assume without loss of generality that
\begin{equation}\label{3.abs}
\|\xi_u(t)\|^2_E+\int_t^{t+1}\|u(s)\|^4_{L^{12}}\,ds\le Q(\|g\|_{L^2}).
\end{equation}
Let $v(t)$ solve the linear problem
\begin{equation}\label{3.lind}
\Dt^2 v+\gamma(t)\Dt v-\Dx v+Lv=0,\ \ \xi_v\big|_{t=\tau}=\{u(\tau)-G,\Dt u(\tau)\},
\end{equation}
where $L>0$ is such that conditions of Proposition \ref{Prop3.lin} are satisfied,
and the remainder $w(t)$ satisfies
\begin{multline}\label{3.rem}
\Dt^2 w+\gamma(t)\Dt w-\Dx w+Lw=\\=-f(u(t))+L(u(t)-G):=h_u(t),\ \ \xi_w\big|_{t=\tau}=0.
\end{multline}
Then, obviously, $u(t)=v(t)+G+w(t)$. Moreover, from Proposition \ref{Prop3.lin}, we have
\begin{equation}\label{3.v}
\|\Dt v(t)\|^2_E+\|\Nx v(t)\|^2_E\le Ce^{-\alpha (t-\tau)},
\end{equation}
where the positive constants $C$ and $\alpha$ are independent of $\xi_\tau\in \mathcal B^0_R$ and $t\ge\tau$.  In order to get the estimate for smoother component $w$, we use the fact that $|f'(u)|\le C(1+|u|^p)$ with $p<4$. Indeed, by interpolation,
$$
\|f(u)\|_{L^2}\le C(1+\|u\|_{L^{10}}^5)\le C(1+\|u\|^4_{L^{12}}\|u\|_{L^6})\le C_1(1+\|u\|^4_{L^{12}})
$$
and, therefore,
$$
\|f(u)\|_{L^1(t,t+1;L^2)}\le C,\ \ t\ge\tau.
$$
Analogously, using the H\"older inequality, we get
\begin{multline}
\|\Nx f(u)\|_{L^\kappa}=\|f'(u)\Nx u\|_{L^\kappa}\le\\\le C\|(1+|u|^p)|\Nx u|\|_{L^\kappa}\le C(1+\|u\|_{L^{12}}^p)\|\Nx u\|_{L^2},
\end{multline}
where $\frac1\kappa=\frac12+\frac{p}{12}$. Thus, we have
$$
\|f(u)\|_{L^1(t,t+1;W^{1,\kappa})}\le C,\ \ t\ge\tau.
$$
Using the embedding $W^{1,\kappa}\subset H^\beta$ where $\frac12=\frac1\kappa-\frac{1-\beta}3$, i.e.,
$$
\beta=\beta_1:=1-\frac p4>0,
$$
we arrive at the estimate
\begin{equation}\label{3.s1}
\|f(u)\|_{L^1(t,t+1;H^\beta)}\le C,\ \ t\ge\tau.
\end{equation}
Applying now the operator $(-\Dx)^{\beta/2}$ to both sides of equation \eqref{3.rem} and using Proposition \ref{Prop3.lin}, we get
\begin{equation}\label{3.w}
\|\Dt w(t)\|_{H^\beta}^2+\|\Nx w(t)\|^2_{H^{\beta}}\le C,
\end{equation}
where we have also implicitly used that
$$
\|L(u-G)\|_{L^1(t,t+1;H^1)}\le C,\ \ t\ge\tau.
$$
Estimates \eqref{3.v} and \eqref{3.w} show that the ball $\mathcal B^\beta_R$ in higher energy space
$$
E^\beta:=[H^{\beta+1}\cap H^1_0]\times H^\beta
$$
is a uniformly exponentially attracting set for the dynamical process $U_\gamma(t,\tau)$. Thus, the first (in a sense most difficult) step is completed.
\par
To initiate the next step of bootstrapping, we note that if we take $\xi_u(\tau)\in \mathcal B_R^\beta$ from the very beginning, we may apply the operator $(-\Dx)^{\beta/2}$ for the equation for $v(t)$ as well and get
$$
\|\Dt v(t)\|^2_{H^\beta}+\|\Nx v(t)\|^2_{H^\beta}\le Ce^{-\alpha(t-\tau)},\ \ t\ge\tau
$$
which together with \eqref{3.w} shows that the dynamical process $U_\gamma(t,\tau)$ is well defined and dissipative in the higher energy space $E^\beta$ as well. In particular, we now have the control of $\Nx u(t)$ not only in $L^2$, but in more regular space $H^\beta$. From Sobolev's embedding this gives the control of $\Nx u$ in the space $L^q$ where $\frac1q=\frac12-\frac\beta3$. In turn, arguing as before, but using this better control for the term $\Nx u$, we improve estimate \eqref{3.s1}
\begin{equation}
\|f(u)\|_{L^1(t,t+1;H^{\beta_1})}\le C,
\end{equation}
where
\begin{equation}\label{3.beta}
\beta_1=1-\frac p4+\beta.
\end{equation}
This gives the analogue of estimate \eqref{3.w} with $\beta$ replaced by $\beta_1>\beta$. Thus, the dynamical process $U_\gamma(t,\tau)$ on $\mathcal B_R^\beta$ has an exponentially attracting ball $\mathcal B^{\beta_1}_R$ in the space $E^{\beta_1}$ if $R$ is large enough. Moreover, since $U(t,\tau)$ is globally Lipschitz on $\mathcal B^0_R$ (see Corollary \ref{Cor2.lip}), the transitivity of exponential attraction gives that $\mathcal B^{\beta_1}_R$ is a uniformly exponentially attracting set for $U(t,\tau)$ in the initial energy space as well.
\par
Finally, iterating the above procedure, we get the exponentially attracting ball in the space $E^1$ which corresponds to $\beta=1$. Indeed, from \eqref{3.beta} we conclude that $\beta_n=n(1-\frac p4)$ and this guarantees that we reach the value $\beta=1$ in finitely many steps. Thus, the theorem is proved.
\end{proof}

\section{Attractors}\label{s4}
The aim of this section is to apply the results obtained above for constructing global and exponential attractors for the dynamical process $U_\gamma(t,\tau)$ associated with equation \eqref{2.wave}. The assumptions on the non-autonomous symbol $\gamma(t)$ posed in Section \ref{s2} are well adapted for usage the so-called {\it uniform} attractor. In order to build up such an object, following the general scheme (see \cite{CV02} for the details), we need to consider not only equation \eqref{2.wave}, but also all time shifts of this equation together with their limit in the proper topology. Namely, let us consider a hull $\mathcal H(\gamma)$ of the initial symbol $\gamma$ defined as follows:
\begin{equation}\label{4.hull}
\mathcal H(\gamma):=\big[T_h\gamma,\ h\in\R]_{L^1_{loc}(\R)},\ \ (T_h\gamma)(t):=\gamma(t+h).
\end{equation}
Then, obviously,
$$
T_h\mathcal H(\gamma)=\mathcal H(\gamma).
$$
Moreover, since $\gamma$ is translation-compact, the hull $\mathcal H(\gamma)$ is a compact set in $L^1_{loc}(\R)$, see \cite{CV02} for details.

From now on, we endow the hull $\mathcal H(\gamma)$ by the $L^1_{loc}(\R)$-topology and will not consider other topologies on this hull. For every $\eta\in\mathcal H(\gamma)$ let us consider the associated wave equation
\begin{equation}\label{4.wave}
\Dt^2u+\eta(t)\Dt u-\Dx u+f(u)=g,\  u\big|_{\partial\Omega}=0,\ \ \xi_u\big|_{t=0}=\xi_\tau.
\end{equation}
We denote by $U_\eta(t,\tau):E\to E$, $t\ge\tau$, the solution operator of this equation.
Then, as not difficult to see that \eqref{2.Negative}, \eqref{2.ul} and \eqref{2.geb} hold uniformly with
respect to $\eta \in\mathcal H(\gamma)$ and, therefore, all of the estimates obtained in Sections
\ref{s2} and \ref{s3} also hold uniformly with respect to $\eta\in\mathcal H(\gamma)$. We also note that the
family of processes $\{U_\eta(t,\tau),\, \eta\in\mathcal H(\gamma)\}$ possesses the so-called translation identity
$$
U_\eta(t+h,\tau+h)= U_{T_h\eta}(t,\tau)
$$
which in turn allows us to reduce the non-autonomous dynamical system considered to the autonomous semigroup $\mathbb S(t)$ acting on the extended phase space $\mathbb E:= E\times\mathcal H(\gamma)$ via
\begin{equation}\label{4.ext}
\mathbb S(t)\{\xi,\eta\}:=\{U_\eta(t,0)\xi, T_t\eta\},
\end{equation}
see \cite{CV02} for more details. Thus, we may define a global attractor $\mathbb A$ for the semigroup $\mathbb S(t)$ in the extended phase space $\mathbb E$.
\begin{Def}  A set $\mathbb A$ is a global attractor for the semigroup $\mathbb S(t): \mathbb E\to\mathbb E$ if
\par
a) $\mathbb A$ is compact in $\mathbb E$;
\par
b) It is strictly invariant: $\mathbb S(t)\mathbb A = \mathbb A$ for all $t\ge0$;
\par
c) It attracts bounded subsets of $\mathbb E$ as $t\to\infty$, i.e., for any bounded $\mathbb B\subset \mathbb E$ and any neighbourhood $\mathcal O(\mathbb A)$ there exists $T = T(\mathbb B, \mathcal O)$ such that
\begin{equation}\label{4.attr}
\mathbb S(t)\mathbb B\subset\mathcal O(\mathbb A),\ t\ge T.
\end{equation}
If the attractor $\mathbb A\subset\mathbb E$ exists, its projection
$\mathcal A_{un} := \Pi_1\mathbb A\subset E$ to the first component of the Cartesian product is called a uniform attractor for the family $U_\eta(t,\tau)$, $\eta\in\mathcal H(\gamma)$ of the dynamical processes associated with equation \eqref{4.wave}.
\end{Def}
The existence of a global/uniform attractor is usually verified using the
following standard result, see e.g., \cite{CV02} for more details.
\begin{prop}\label{Prop4.abs} Let the semigroup $\mathbb S(t):\mathbb E\to\mathbb E$ is
\par
a) Continuous as a map from $\Bbb E$ to $\Bbb E$ for every fixed $t\ge0$;
\par
b) Possesses a compact attracting set $\mathcal B\subset\mathbb E$, i.e., for every bounded
$\mathbb B\subset\mathbb E$
and every neighbourhood $\mathcal O(\mathbb B)$ there exists $T = T(\mathbb B, \mathcal O)$ such that \eqref{4.attr} is satisfied (with $\mathbb A$ replaced by $\mathcal B$).
\par
Then the semigroup $\mathbb S(t)$ possesses a global attractor $\mathbb A$. Moreover, if the semigroup $\mathbb S(t)$ is an extended semigroup for the family of processes $U_\eta(t,\tau):E\to E$, $\eta\in\mathcal H(\gamma)$, then the uniform attractor $\mathcal A_{un}=\Pi_1\mathbb A$ possesses the following description:
\begin{equation}\label{4.str}
\mathcal A_{un} =\cup_{\eta\in\mathcal H(\gamma)}\mathcal K_\eta\big|_{t=0},
\end{equation}
where
\begin{equation}
\mathcal K_\eta:= \{\xi_u\in B(\R,E):\ \xi_u(t)= U_\eta(t,\tau)\xi_u(\tau),\,\tau\in\R,\, t\ge\tau\}
\end{equation}
is the set of all complete (defined for all $t\in\R$) bounded solutions of equation \eqref{4.wave}
with fixed symbol $\eta$ (=the so-called kernel of equation \eqref{4.wave} in the
terminology of Chepyzhov and Vishik, see \cite{CV02}).
\end{prop}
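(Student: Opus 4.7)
The plan is the standard two-step approach: first construct $\mathbb{A}$ as the $\omega$-limit set of the attracting set $\mathcal{B}$, and then unwind the definition of $\mathbb{S}(t)$ on the extended phase space to extract the description \eqref{4.str}.

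For the first step I would set
\begin{equation*}
\mathbb{A} := \omega(\mathcal{B}) = \bigcap_{s\ge 0}\overline{\bigcup_{t\ge s}\mathbb{S}(t)\mathcal{B}}.
\end{equation*}
Compactness of $\mathbb{A}$ is immediate from the compactness of $\mathcal{B}$ together with its attracting property (which places $\mathbb{S}(t)\mathcal{B}$ inside an arbitrarily small neighbourhood of $\mathcal{B}$ for all large $t$, so that $\mathbb{A}\subset\overline{\mathcal O(\mathcal{B})}$ for every such neighbourhood). Strict invariance $\mathbb{S}(t)\mathbb{A}=\mathbb{A}$ follows from the continuity hypothesis (a) applied to accumulation points of sequences $\mathbb{S}(t_n)b_n$ with $t_n\to\infty$, together with the semigroup property. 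Attraction of an arbitrary bounded set $\mathbb{B}$ is the familiar argument: the attracting property of $\mathcal{B}$ first funnels $\mathbb{S}(t)\mathbb{B}$ into any prescribed neighbourhood of $\mathcal{B}$ after some time $T(\mathbb{B},\mathcal{O})$, and a compactness-plus-continuity step then drives it further into any neighbourhood of $\mathbb{A}$.

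For the second step I would use the general characterization that, for an autonomous semigroup with a global attractor, $\mathbb{A}$ equals the set of $t=0$ sections of all complete bounded trajectories $z(\cdot):\mathbb{R}\to\mathbb{E}$ of $\mathbb{S}(t)$. If $z(t)=\{\xi(t),\eta(t)\}$ is such a trajectory then, directly from the definition \eqref{4.ext}, $\eta(t)=T_t\eta(0)$ for $t\ge 0$; translation-compactness of $\gamma$ makes the shift $T_h:\mathcal{H}(\gamma)\to\mathcal{H}(\gamma)$ bijective for every $h\in\mathbb{R}$, so in fact $\eta(t)=T_t\eta(0)$ for all $t\in\mathbb{R}$. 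The translation identity then identifies $\xi(\cdot)$ with a complete bounded trajectory of the process $U_{\eta(0)}(\cdot,\cdot)$, i.e.\ an element of $\mathcal{K}_{\eta(0)}$. Projecting onto the first coordinate, and noting that $\eta(0)$ ranges over all of $\mathcal{H}(\gamma)$ because $T_t\mathcal{H}(\gamma)=\mathcal{H}(\gamma)$, delivers \eqref{4.str}.

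The main technical subtlety I expect to have to manage is the backward-in-time extension. The semigroup $\mathbb{S}(t)$ is defined only for $t\ge 0$, so a complete trajectory through a point of $\mathbb{A}$ must be constructed on $(-\infty,0]$ via a standard diagonal-subsequence argument using the compactness of $\mathcal{B}$ and the continuity of $\mathbb{S}(t)$ to pass to limits. The translation identity, together with the bi-invariance $T_h\mathcal{H}(\gamma)=\mathcal{H}(\gamma)$ for all $h\in\mathbb{R}$ (itself a consequence of translation-compactness), is precisely what guarantees that the extended $\xi$-component is a genuine solution of \eqref{4.wave} with symbol $\eta(0)$ on the whole real line; this is the bridge between the abstract $\omega$-limit description of $\mathbb{A}$ and the concrete kernel representation appearing on the right-hand side of \eqref{4.str}.
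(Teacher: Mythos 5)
Your proposal is correct and follows the standard route: the paper gives no proof of this proposition, deferring to Chepyzhov--Vishik \cite{CV02}, and your construction of $\mathbb A$ as the $\omega$-limit set of the compact attracting set $\mathcal B$, followed by the identification of $\Pi_1\mathbb A$ through complete bounded trajectories, the skew-product structure \eqref{4.ext} and the translation identity, is precisely the argument given in that reference. No gaps.
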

Using the estimates obtained in previous two sections, we get the following result.

\begin{theorem}\label{Th4.unattr} Let the assumptions of Theorem \ref{Th3.main} hold. Then the family
$U_\eta(t,\tau)$ of dynamical processes associated with wave equation \eqref{4.wave} possesses
a uniform attractor $\mathcal A_{un}$ which is a bounded set of $E^1$. Moreover, this
attractor is generated by all complete bounded trajectories of equations \eqref{4.wave},
i.e., the representation formula \eqref{4.str} holds.
\end{theorem}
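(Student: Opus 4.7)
The plan is to apply the abstract criterion of Proposition \ref{Prop4.abs} to the extended semigroup $\mathbb S(t):\mathbb E\to\mathbb E$ defined in \eqref{4.ext}, and then read off the uniform attractor as its projection onto the first factor. This reduces the theorem to verifying two items: (i) existence of a compact attracting set in $\mathbb E$, and (ii) continuity of $\mathbb S(t)$ on $\mathbb E$ for each fixed $t\ge 0$.

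For (i), I would take the candidate $\mathcal B:=\mathcal B^1_R\times \mathcal H(\gamma)\subset\mathbb E$, where $\mathcal B^1_R$ is the ball from Theorem \ref{Th3.main}. Since $\gamma$ is translation-compact in $L^1_b(\R)$, the hull $\mathcal H(\gamma)$ is compact in $L^1_{loc}(\R)$; since $E^1\hookrightarrow E$ is a compact embedding by Rellich, $\mathcal B^1_R$ is compact in $E$. Hence $\mathcal B$ is compact in $\mathbb E$. The attraction property follows from Theorem \ref{Th3.main} together with the observation, already noted in the excerpt, that all dissipative and smoothing estimates of Sections \ref{s2}--\ref{s3} hold uniformly with respect to $\eta\in\mathcal H(\gamma)$: indeed, \eqref{2.Negative} and $\|\eta\|_{L^1_b}$ are preserved under time-translation and under $L^1_{loc}$-limits. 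Combined with the translation identity this upgrades \eqref{3.expattr} into uniform exponential attraction of $\mathcal B^1_R$ by each $U_\eta(t,\tau)$, and hence into attraction of bounded sets of $\mathbb E$ by $\mathcal B$ under $\mathbb S(t)$.

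For (ii), fix $t\ge0$ and take $(\xi_n,\eta_n)\to(\xi,\eta)$ in $\mathbb E$. Writing $u_n(\cdot):=U_{\eta_n}(\cdot,0)\xi_n$ and $u(\cdot):=U_\eta(\cdot,0)\xi$, the difference $w_n:=u_n-u$ satisfies
\begin{equation*}
\Dt^2 w_n+\eta(s)\Dt w_n-\Dx w_n+\ell_n(s)w_n=(\eta(s)-\eta_n(s))\Dt u_n(s),
\end{equation*}
with $\ell_n$ the obvious Lipschitz secant of $f$ along $(u_n,u)$. Multiplying by $\Dt w_n$ and arguing exactly as in Corollary \ref{Cor2.lip}, but now keeping the extra forcing term, I obtain on $[0,t]$ a bound of the form
\begin{equation*}
\|\xi_{w_n}(t)\|_E^2\le Ce^{Lt}\Bigl(\|\xi_n-\xi\|_E^2+\bigl\|(\eta-\eta_n)\Dt u_n\bigr\|^2_{L^1(0,t;L^2)}\Bigr),
\end{equation*}
where the uniform Strichartz-in-energy control \eqref{2.stdis} is used to absorb $\ell_n$. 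The first term tends to zero by assumption, and the second tends to zero because $\Dt u_n$ is uniformly bounded in $L^\infty(0,t;L^2)$ while $\eta_n\to\eta$ in $L^1(0,t)$; this yields continuity of $\mathbb S(t)$. Having verified (i) and (ii), Proposition \ref{Prop4.abs} delivers a global attractor $\mathbb A\subset\mathbb E$; its projection $\mathcal A_{un}=\Pi_1\mathbb A$ is automatically a bounded subset of $E^1$ (since $\mathbb A\subset\mathcal B$), and the representation formula \eqref{4.str} is the standard Chepyzhov--Vishik identification of $\Pi_1\mathbb A$ with the union of kernel sections, as stated in Proposition \ref{Prop4.abs}.

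The main obstacle I anticipate is the continuity with respect to the symbol $\eta$ in (ii): the nonlinear term is only locally Lipschitz up to the Strichartz-controlled factor $(1+\|u_n\|^4_{L^{12}}+\|u\|^4_{L^{12}})$, so one genuinely needs the subcritical energy-to-Strichartz bound \eqref{2.es} to be uniform in $\eta\in\mathcal H(\gamma)$, and this in turn relies on uniformity of all constants in Theorem \ref{Th2.main}. Once this uniformity is spelled out, the remainder of the argument is a standard application of the Chepyzhov--Vishik framework.
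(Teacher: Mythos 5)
Your proposal is correct and follows essentially the same route as the paper: the paper likewise applies Proposition \ref{Prop4.abs} to the extended semigroup $\mathbb S(t)$ with the compact attracting set $\mathcal B^1_R\times\mathcal H(\gamma)$ (using compactness of $E^1\hookrightarrow E$ and of the hull) and verifies continuity as in Corollary \ref{Cor2.lip}, leaving the continuity in the symbol $\eta$ to the reader. Your explicit treatment of that symbol-continuity, via the difference equation with forcing $(\eta-\eta_n)\Dt u_n$ and the uniform Strichartz control, simply fills in the detail the paper omits and matches the computation the paper itself performs later in the proof of Theorem \ref{Th4.main}.
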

\begin{proof} Indeed, the continuity of the extended semigroup $\Bbb S(t)$ can be verified
exactly as in Corollary \ref{Cor2.lip} (we left the proof of continuity
with respect to
the symbol $\eta\in\mathcal H(\gamma)$ to the reader). Moreover, estimate \eqref{3.expattr} shows that
the set
$\mathbb B:= \mathcal B_R^1\times\mathcal H(\gamma)$ is a compact attracting set for the semigroup $\mathbb S(t)$. Here we have also used that the embedding $E^1\subset E$ is compact and the hull $\mathcal H(\gamma)$ endowed by the
$L^1_{loc}(\R)$ topology is also compact. Thus, all assumptions of Proposition \ref{Prop4.abs} are verified and, therefore, the existence of a uniform attractor $\mathcal A_{un}$ is also verified and the theorem is proved.
\end{proof}
\begin{rem}
There is an alternative equivalent (at least in the case where
$\mathbb S(t)$ is continuous, see \cite{CV02}) definition of a uniform attractor which does
not refer explicitly to the reduction to autonomous extended semigroup. Namely, the set $\mathcal A_{un}\subset E$ is a uniform attractor for the dynamical process $U_\gamma(t,\tau): E\to E$ if
\par
1) $\mathcal A_{un}$ is compact in $E$;
\par
2) It attracts bounded sets $B\subset E$ uniformly with respect to $\tau\in\R$. Namely, for every bounded $B\subset E$ and every neighbourhood $\mathcal O(\mathcal A_{un})$ there exists $T=T(B, \mathcal O)$ such that
$$
U_\gamma(t,\tau)B\subset \mathcal O(\mathcal A_{un}),\text{ if } t-\tau\ge T;
$$
3) $\mathcal A_{un}$ is a minimal set which satisfies properties 1) and 2).
\par
However, in order to get the representation formula \eqref{4.str}, we need in any case to introduce the hull of the non-autonomous symbol $\gamma$. Since this representation formula is crucial for the attractors theory, we prefer to introduce the uniform attractor using the extended semigroup from the very beginning.
\end{rem}
\begin{rem} For every $\eta\in\mathcal H(\gamma)$, we may define the so-called kernel sections
\begin{equation}
\mathcal K_\eta(\tau):= \mathcal K_\eta\big|_{t=\tau}.
\end{equation}
Then, as not difficult to see, the sets $\mathcal K_\eta(\tau)$ are compact and strictly invariant, i.e., $\mathcal K_\eta(t)= U_\eta(t,\tau)\mathcal K_\eta(\tau)$, $t\ge\tau\in\R$.
\par
Moreover, as proved in \cite{CV94,CV02}, for every fixed $\eta$, these kernel sections possess the so-called pullback attraction property, i.e., for any bounded set $B\subset E$, any fixed $t\in\R$ and any neighbourhood $\mathcal O(\mathcal K_\eta(t))$, there exists $T = T (t, B, \mathcal O)$ such that
\begin{equation}\label{4.pull}
 U_\eta(t,\tau)B\subset O(\mathcal K_\eta(t))\ \text{ if } t-\tau>T.
\end{equation}
By this reason, the time dependent family $t\to\mathcal K_\eta(t)$ is often called a pullback
attractor associated with equation \eqref{4.wave}, see \cite{Kloeden,Carvalho2012} and references therein. Note that, in contrast to the case of a uniform attractor, the rate of convergence in \eqref{4.pull} is not uniform with respect to $t\in\R$ and by this reason forward in time convergence may fail.
\end{rem}
We now turn to exponential attractors. This concept has been introduced
in \cite{EFNT} for the autonomous case in order to overcome the major drawback of global attractors, namely, the fact that the convergence to a global attractor may be arbitrarily slow and there is no way to control this rate in terms of physical parameters of the system considered. This leads to sensitivity of the attractor to perturbation and makes it in a sense unobservable in experiments. Roughly speaking, the idea of an exponential attractor is to add some extra points to the global attractor in such a way that, on the one hand, the rate of attraction to this object becomes exponential and controllable and, on the other hand, the size of the new attractor does not grows drastically, for instance, it should remain finite dimensional. The price to pay is that an exponential attractor is only positively invariant and as a
consequence, it is not unique, see \cite{EMZ,MirZel} and references therein.
\par
The situation is more delicate when non-autonomous equations are considered since new essential drawbacks of global attractors come into play.
Indeed, a uniform attractor is usually huge (infinite-dimensional) even when the ”real” attractor consists of the only exponentially stable trajectory and kernel sections (= pullback attractors) do not attract in a natural sense (forward in time). All these drawbacks can be overcome using the concept of a non-autonomous exponential attractor which is, on the one hand, remains finite-dimensional and, on the other hand, not only pullback but also forward and uniform
in time exponentially attracting. We use below the construction given in \cite{EMZ}.
\begin{Def} Let $U(t,\tau): E\to E$ be a dynamical process in a B-space $E$. Then, the family of sets $\mathcal M(t)\subset E$, $t\in\R$, is a non-autonomous exponential attractor for $U(t,\tau)$ if
\par
1) The sets $\mathcal M(t)$ are compact in $E$;
\par
2) They are positively semi-invariant: $U(t,\tau)\mathcal M(\tau)\subset\mathcal M(t)$;
\par
3) The fractal dimension of $\mathcal M(t)$ is finite and uniformly bounded:
$$
\dim_f (\mathcal M(t),E)\ge C<\infty
$$
for all $t\in\R$;
\par
4) They are uniformly exponentially attracting, i.e., there exists a positive constant $\alpha$ and a monotone increasing function $Q$ such that
$$
\dist_H(U(t,\tau)B, \mathcal M(t))\le Q(\|B\|_E)e^{-\alpha (t-\tau)}
$$
uniformly with respect to $t\ge\tau$ and $\tau\in\R$.
\end{Def}
We are now ready to state and prove the main result of this section.

\begin{theorem}\label{Th4.main} Let the assumptions of Theorem \ref{Th3.main} hold and let, in addition, $f\in C^2(\R)$ and
\begin{equation}\label{4.gsm}
\gamma\in L^{1+\eb}_b(\R)
\end{equation}
for some positive $\eb$. Then, the dynamical process $U_\gamma(t,\tau): E\to E$ associated with wave equation \eqref{2.wave} possesses a non-autonomous exponential attractor $t\to\mathcal M(t)=\mathcal M_\gamma(t)$ which is uniformly bounded in $E^1$ and the following H\"older continuity in time holds:
\begin{equation}\label{4.hld}
\dist^{sym}_E(\mathcal M(t), \mathcal M(\tau))\le  C\(\int_0^\infty e^{-\alpha s}|\gamma(t-s)-\gamma (\tau-s)| \,ds\)^\kappa ,
\end{equation}
where the positive constant $C$ and $0<\kappa<1$ are independent of $t$ and $\tau$ and $\dist^{sym}$ is a symmetric Hausdorff distance between sets in E.
\end{theorem}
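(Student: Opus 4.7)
The plan is to follow the abstract scheme of Efendiev, Miranville and Zelik \cite{EMZ} for constructing non-autonomous exponential attractors and apply it to the process $U_\gamma(t,\tau)$ restricted to a smooth invariant absorbing set. Using Theorem \ref{Th3.main} together with a standard transitivity of exponential attraction argument, one first produces a closed bounded set $\mathcal B\subset E^1$ which is forward invariant under $U_\gamma(t,\tau)$, uniformly bounded in $E^1$ (hence compact in $E$), and still uniformly exponentially attracting in $E$. Thus it suffices to build $\mathcal M(t)$ inside $\mathcal B$.

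The key analytic input is a uniform smoothing-plus-contraction estimate for differences of two trajectories starting in $\mathcal B$. For $\xi_1,\xi_2\in\mathcal B$, I would decompose the difference $w=u_1-u_2$ as $w=v+z$, where $v$ solves the linear equation of Proposition \ref{Prop3.lin} with initial data $\xi_1-\xi_2$, and $z$ solves the same linear equation with zero initial data and right-hand side $Lw-(f(u_1)-f(u_2))$. Proposition \ref{Prop3.lin} gives the contractive bound $\|\xi_v(\tau+T)\|_E\le Ce^{-\alpha T}\|\xi_1-\xi_2\|_E$, while the assumption $f\in C^2$ together with the uniform $E^1$-bound on $u_1,u_2$ and the interpolation and H\"older estimates already used in the proof of Theorem \ref{Th3.main} yield a smoothing bound $\|\xi_z(\tau+T)\|_{E^\beta}\le C\|\xi_1-\xi_2\|_E$ for some $\beta>0$, uniformly in $\tau\in\R$. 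Choosing $T$ large enough that $Ce^{-\alpha T}<1/2$, the abstract construction of \cite{EMZ}, combined with the uniform Lipschitz estimate of Corollary \ref{Cor2.lip}, produces a family of discrete exponential attractors $\mathcal M_d(\tau+nT)\subset\mathcal B$, $n\in\Bbb Z$, of uniformly finite fractal dimension. Setting
\begin{equation*}
\mathcal M(t):=\bigcup_{s\in[0,T]}U_\gamma(t,t-s)\,\mathcal M_d(t-s),
\end{equation*}
yields a positively semi-invariant family, uniformly bounded in $E^1$ and uniformly exponentially attracting thanks to the $\tau$-uniform Lipschitz bound on bounded time intervals.

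The main obstacle, and the only place where the extra regularity $\gamma\in L^{1+\eb}_b(\R)$ is genuinely used, is the H\"older continuity \eqref{4.hld}. The abstract construction gives $\mathcal M_d(\tau)$ as a Lipschitz function (in the symmetric Hausdorff distance in $E$) of the past of the symbol, measured in an exponentially weighted norm that comes from the stability in Proposition \ref{Prop3.lin}. The required estimate is
\begin{equation*}
\|U_{T_t\gamma}(0,-s)\xi-U_{T_\tau\gamma}(0,-s)\xi\|_E\le C\int_0^s e^{-\alpha\sigma}|\gamma(t-\sigma)-\gamma(\tau-\sigma)|\,d\sigma,
\end{equation*}
uniformly for $\xi\in\mathcal B$ and $s\ge0$. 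To establish it, one writes the equation for the difference $u_1-u_2$ of two solutions with symbols $T_t\gamma$ and $T_\tau\gamma$, applies the same multiplier as in Theorem \ref{Th2.main}, and estimates the mismatch term $(\gamma(t-\sigma)-\gamma(\tau-\sigma))\Dt u_1$ by H\"older's inequality. The condition $\gamma\in L^{1+\eb}_b$, together with the control of $\Dt u_1$ on $\mathcal B$ afforded by Theorem \ref{Th2.main} and a short bootstrap in the dual exponent $(1+\eb)/\eb$, places the product in $L^1_{loc}$, so that the weighted Gronwall argument based on \eqref{2.betag} closes the estimate. Interpolating the resulting Lipschitz bound in a weaker norm against the uniform $E^1$-bound on $\mathcal M_d$ yields H\"older dependence in $E$ with some $\kappa\in(0,1)$, and \eqref{4.hld} for the continuous family $\mathcal M(t)$ then follows from the uniform Lipschitz property of $U_\gamma$ on unit time windows.
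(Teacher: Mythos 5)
Your construction follows essentially the same route as the paper: restriction to the smooth exponentially attracting ball of Theorem \ref{Th3.main}, the splitting of a difference of trajectories into a contracting part governed by Proposition \ref{Prop3.lin} and a smoothing part driven by $Lw-(f(u_1)-f(u_2))$, the abstract discrete exponential attractor theorem of \cite{EMZ}, and the continuous-time union $\mathcal M(t)=\cup_{s\in[0,T]}U_\gamma(t,t-s)\mathcal M_d(t-s)$. The discrete part of your argument is sound (landing in $E^\beta$ rather than $E^1$ is immaterial for the abstract theorem, and your symbol-difference estimate is the paper's \eqref{4.dtime}).

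There is, however, a genuine gap at the final step, and it sits exactly where the hypothesis $\gamma\in L^{1+\eb}_b(\R)$ is actually needed. You assert that \eqref{4.hld} for the continuous family ``follows from the uniform Lipschitz property of $U_\gamma$ on unit time windows,'' and you spend the extra regularity of $\gamma$ on the mismatch term $(\gamma(t-\sigma)-\gamma(\tau-\sigma))\Dt u_1$. But that term already lies in $L^1_{loc}(L^2)$ for $\gamma\in L^1_b$, since $\Dt u_1$ is bounded in $L^\infty(\R;L^2)$ on the absorbing set; no H\"older inequality with exponent $(1+\eb)/\eb$ is required there, so the extra integrability is wasted. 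What genuinely requires $\gamma\in L^{1+\eb}_b$ is the uniform H\"older continuity \emph{in time} of the trajectories $s\mapsto\xi_u(s)$ with values in $E$: the component $u$ is Lipschitz in $H^1$ because $\Dt u$ is bounded, but the velocity component $\Dt u$ in $L^2$ is only as regular as $\Dt^2 u=-\gamma\Dt u+\Dx u-f(u)+g$ permits, and with $\gamma\in L^1_b$ one only gets $\Dt^2u\in L^1_b(L^2)$, which yields no quantitative modulus of continuity for $\Dt u$. Corollary \ref{Cor2.lip} gives Lipschitz dependence on the \emph{initial data}, not on time, so it cannot substitute here. Without this time regularity the union over $s\in[0,T]$ need not have finite fractal dimension and the comparison of $\mathcal M(t)$ with $\mathcal M(\tau)$ does not close. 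The repair is the paper's own: $\gamma\in L^{1+\eb}_b$ together with the uniform $E^1$-bound gives $\Dt^2u\in L^{1+\eb}_b(L^2)$, hence $\Dt u$ is H\"older in $L^2$ with exponent $\eb/(1+\eb)$, and the continuous-time union then inherits both the finite dimensionality and the H\"older estimate \eqref{4.hld}.
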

\begin{proof}
We follow the approach developed in \cite{EMZ}. As a first step, we note that it is sufficient to construct an exponential attractor not on the whole space $E$, but only on the exponentially attracting ball $\mathcal B^1_R$ of smoother space $E^1$ constructed in Theorem \ref{Th3.main}. Indeed, the already mentioned transitivity of exponential attraction will give then the result for the whole space $E$.
\par
As the next step, we introduce a family of discrete dynamical processes
\begin{equation}\label{4.dc}
\bar U_\tau(m,n):=U_\gamma(nT+\tau,mT+\tau),\  n,m\in\mathbb Z,\ n\ge m
\end{equation}
depending on a parameter $\tau\in\R$. Here $T$ is a sufficiently large positive number which will be fixed later. If we construct exponential attractors
$\mathcal M_{d,\tau} (n)$ for these processes, the desired ”continuous” exponential attractor will be obtained by the standard expression
\begin{equation}\label{4.dcc}
\mathcal M(\tau):=\cup_{s\in[0,T]}U_\gamma(\tau, \tau-s)\mathcal M_{d,\tau-s}(0),
\end{equation}
see \cite{EMZ,MirZel} for details. To verify the existence of the attractors $\mathcal M_{d,\tau} (n)$, we need to check a number of assumptions of the abstract exponential attractor
existence theorem stated in \cite{EMZ}. Namely, we first need to check that
\begin{equation}\label{4.eabs}
\bar U_\tau(n +1,n)\( \mathcal O^{E^1}_\eb(\mathcal B_R^1)\)\subset\mathcal B_R^1,\ \ \tau\in\R
\end{equation}
for sufficiently small $\eb>0$. Here we denote by $\mathcal O^{E^1}_\eb (V )$ the $\eb$-neighbourhood
of the set $V\subset E^1$ in the topology of $E^1$ . Indeed, as follows from the proof of
Theorem \ref{Th3.main}, the dynamical process $U_\gamma(t,\tau)$ is well posed in $E^1$ and possesses a uniformly absorbing ball in it. By this reason,
taking $\eb>0$ small enough
and increasing the radius $R$ if necessary, we get \eqref{4.eabs} for all $\tau\in\R$ if $T> 0$
is large enough.
\par
Second, we need to check the squeezing property (asymptotic smoothing
property for differences of solutions) for the operators $\bar U_\tau(n,m)$. Let $u_1(t)$
and $u_2(t)$ be two solutions of problem \eqref{2.wave} starting at $t=\tau$ from $\mathcal O^{E^1}_\eb(\mathcal B^1_R)$.
Then we know from the dissipative estimate in $E^1$ that
\begin{equation}\label{4.bbound}
 \|\Dt u_i(t)\|_{H^1} + \|u_i(t)\|_{H^2}\le C,\  t\ge\tau,\ i=1,2,
\end{equation}
where the constant $C$ is independent of $t$ and $u_i$. Let $v(t):= u_1(t)-u_2(t)$.
Then, this function solves equation \eqref{2.minus}. In particular, estimate \eqref{2.lip}
holds. We now split $v(t)= v_1(t)+ v_2(t)$, where the function $v_1(t)$ solves
\begin{equation}\label{4.v1}
 \Dt^2 v_1+\gamma(t)\Dt v_1-\Dx v_1+Lv_1=0,\ \ \xi_{v_1}\big|_{t=\tau}=\xi_v\big|_{t=\tau},
\end{equation}
where $L$ is large enough for the assertion of Proposition \ref{Prop3.lin} to be satisfied.
Then, from estimate \eqref{4.lindis}, we have
\begin{equation}\label{4.v1A}
 \|\Dt v_1(t)\|_{L^2}^2+ \|\Nx v_1(t)\|^2_{L^2}\le  Ce^{-\alpha(t-\tau)}\( \|\Dt v(\tau)\|^2_{L^2}+ \|\Nx v(\tau)\|^2_{L^2}\).
\end{equation}
The remainder $v_2(t)$ solves the equation
\begin{equation}\label{4.v2}
\Dt^2v_2+\gamma(t)\Dt v_2-\Dx v_2+Lv_2 = Lv-l(t)v:=h_v(t),\ \ \xi_{v_2}\big|_{t=\tau}=0.
\end{equation}
Using the facts that $u_i(t)$ are bounded in $H^2$
and $f\in C^2$ together with \eqref{2.lip}, one easily get
\begin{equation}
\|h_v(t)\|_{H^1_0}^2\le C\|\Nx v(t)\|^2_{L^2}\le Ce^{K(t-\tau)}\(\|\Dt v(\tau)\|^2_{L^2}+\|\Nx v(\tau)\|^2_{L^2}\).
\end{equation}
Therefore, applying the operator $(-\Dx)^{1/2}$ to both sides of equation \eqref{4.v2}
and using estimate \eqref{4.lindis}, we have
\begin{equation}\label{4.v2K}
\|\Dt v(t)\|_{H^1}^2+\|\Nx v(t)\|^2_{H^1}\le Ce^{K(t-\tau)}\(\|\Dt v(\tau)\|^2_{L^2}+\|\Nx v(\tau)\|^2_{L^2}\),
\end{equation}
where the constants $C$ and $K$ are independent of $u_1$, $u_2$, $t$ and $\tau$.
\par
Let now $\xi_v:= \bar U_\tau(n +1,n)\xi_1-\bar U_\tau(n +1,n)\xi_2$. Then, according to the above estimates $\xi_v$ can be presented as a sum $\xi_v = \xi_{v_1}+\xi_{v_2}$ in such a way that
$$
\|\xi_{v_1}\|_E\le Ce^{-\alpha T}\|\xi_1-\xi_2\|_E, \ \ \|\xi_{v_2}\|_{E^1}\le Ce^{KT}\|\xi_1-\xi_2\|_E
$$
and fixing $T>0$ in such a way that $Ce^{-\alpha T}\le\frac12$, we get the desired squeezing property.
\par
Third, we need to estimate the difference between the processes which correspond to different symbols from the hull $\mathcal H(\gamma)$, say, $\gamma$ and $T_{-s}\gamma$. Let
$\xi_{u_1}(t):=U_\gamma(t,\tau)\xi_\tau$ and
$\xi_{u_2}(t):=U_{T_{-s}\gamma}(t,\tau)\xi_\tau$. Then these functions also satisfy
the uniform estimate \eqref{4.bbound}. Let $v(t):=u_1(t)-u_2(t)$ which solves the
equation
$$
\Dt^2 v+\gamma(t)\Dt v-\Dx v+l(t)v=(\gamma(t)-\gamma(t-s))\Dt u_2(t),\ \ \xi_v\big|_{t=\tau}=0
$$
and arguing as in the proof of estimate \eqref{2.lip}, we end up with the desired estimate
\begin{equation}\label{4.dtime}
\|\xi_v(t)\|_E^2\le C\int_\tau^t e^{K(t-l)}|\gamma(l)-\gamma(l-s)|dl.
\end{equation}
Thus, all of the conditions of the abstract theorem on the existence of an exponential attractor are verified and, therefore, the discrete exponential
attractors $\mathcal M_{d,\tau}(n)$ are constructed, see \cite{EMZ} for more details. Finally, in order to pass from discrete to continuous exponential attractors
using \eqref{4.dcc}, we need to establish that the trajectories of the dynamical process $U_\gamma(t,\tau)$ (started from $\mathcal O^{E^1}_\eb(\mathcal B^1_R))$ are uniformly H\"older continuous in
time (with values in $E$). The continuity and even Lipschitz continuity of $u(t)$ in $H^1$ is obvious since we have the control of $\Dt u$ in $H^1$ , but the H\"older continuity of $\Dt u$ in $L^2$ is a
bit more delicate. Indeed, if we know that
$\gamma\in L^1_b(\R)$ only, from equation \eqref{2.wave} and \eqref{4.bbound}, we will get that $\Dt^2 u\in L^1_b(L^2)$ only and this is not sufficient to get the H\"older continuity in time
 for $\Dt u$. To
overcome this problem, we put an extra condition \eqref{4.gsm} which guarantees
that $\Dt^2 u\in L^{1+\eb}_b(L^2)$ and this gives the desired H\"older continuity of $\Dt u$.
Thus, the above mentioned H\"older continuity is verified and the theorem is proved.
\end{proof}
\begin{rem}
The H\"older continuity \eqref{4.hld} is crucial for the consistency with the autonomous case. Indeed, it guarantees that in the autonomous case $\gamma\equiv const$ the non-autonomous exponential attractor coincides with the autonomous one. Moreover, if $\gamma$ is periodic, quasi-periodic, etc., the same will be true for the non-autonomous attractor as well. Actually, exactly in order to
guarantee this H\"older continuity, we need to assume the extra
regularity \eqref{4.gsm}. This H\"older continuity sometimes may look as an essential restriction, but it cannot be relaxed without losing the consistency with the autonomous case. In particular, without this assumption, we may have a ”pathological” dependence on time of the exponential attractor which correspond to the autonomous case $\gamma\equiv const$.
\end{rem}
\begin{rem}
Since the kernel sections are $\mathcal K_\gamma(\tau)$ always subsets of the corresponding exponential attractors:
$$
\mathcal K_\gamma(\tau)\subset\mathcal M(\tau),
$$
we automatically have the finite-dimensionality of kernel sections $\mathcal K_\gamma(\tau)$ for
all $\tau\in\R$.
\end{rem}

\section{Random dissipation rate}\label{s5}
In this section we weaken the assumptions on the dissipation coefficient $\gamma(t)$ in order to be able to consider the case when the dissipation is random. Indeed, although assumption \eqref{2.Negative} is well adapted to the case of periodic or almost periodic dissipation, the uniformity of averaging with respect to $\tau\in\R$ postulated there is too restrictive if we want to consider chaotic or random dissipation rates and should be relaxed. In this section, we assume that there exists a Borel probability measure $\mu$ on the hull $\mathcal H(\gamma)$ such that the group of shifts $\{T_s,\, s\in\R\}$ is measure preserving and ergodic:
\begin{equation}\label{5.erg}
T_s\mu=\mu,\ s\in\R,\ \ \text{ $\mu$ is {\it ergodic} with respect to $\{T_s,\,s\in\R\}$}.
\end{equation}
We replace \eqref{2.Negative} by weaker assumption
\begin{equation}\label{5.bir}
\int_{\eta\in\mathcal H(\gamma)}\(\int_0^1\(\frac12\gamma_+(s)-\frac{p+2}{p+4}\gamma_-(s)\)\,ds\)\mu(d\eta)=\bar\beta>0.
\end{equation}
Then, by Birkhoff ergodic theorem, for any $\tau\in\R$,
\begin{equation}\label{5.ergcor}
\lim_{T\to\infty}\frac1{2T}\int_{\tau-T}^{\tau+T}\beta(s)\,ds=
\lim_{T\to\infty}\frac1{T}\int_{\tau-T}^{\tau}\beta(s)\,ds=\bar\beta,
\end{equation}
for almost all $\eta\in\mathcal H(\gamma)$,
where $\beta(t)=\beta_\eta(t):=\frac12\eta_+(s)-\frac{p+2}{p+4}\eta_-(s)$. However, in contrast to the previous sections, this limit is {\it not uniform} with respect to $\tau\in\R$ and this leads to essential changes in the theory. In particular, as we will see below, assumption \eqref{5.ergcor} does not guarantee the dissipativity of equation \eqref{4.wave}, moreover, most part of trajectories may be unbounded as $t\to\infty$. To overcome this difficulty, we will use the {\it pullback} attraction property and the theory of pullback/random attractors theory, developed in \cite{CF,CF1,Kloeden},  see also references therein. We start with necessary definition and straightforward results.

\begin{Def} A function $t\to\varphi(t)\in\R$, $t\in\R$ is called {\it tempered} if
\begin{equation}\label{5.tempf}
\lim_{t\to-\infty}e^{\theta t}|\varphi(t)|=0,\ \ \forall\theta>0.
\end{equation}
Analogously, a family of bounded sets $B(t)\subset E$, $t\in\R$ is called tempered if the function $\varphi_B(t):=\|B(t)\|_{E}$ is tempered.
\end{Def}
 We now state the tempered analogues of absorbing and attracting sets as well as the tempered pullback attractors.
 \begin{Def} A family $\Bbb B(t)$, $t\in\R$, of bounded sets in $E$ is called tempered (pullback) absorbing set for the process $U_\eta(t,\tau):E\to E$ if it is tempered and
 for every other tempered family $B(t)$, $t\in\R$, of bounded sets and every fixed $t\in\R$,
 $$
 U_\eta(t,t-s)B(t-s))\subset \Bbb B(t)
 $$
 if $s\ge S(B,t)$ is large enough.
 \par
 A family $\mathcal B(t)$, $t\in\R$, of bounded sets in $E$ is called tempered attracting set if
it  is tempered and for every other tempered family $B(t)$ of bounded sets and every fixed $t\in\R$,
 $$
 \lim_{s\to\infty}\dist_H(U_\eta(t,t-s)B(t-s),\mathcal B(t))=0.
 $$
 \end{Def}
 We are now ready to define the tempered analogue of kernel sections (=pullback attractor).
 \begin{Def} A tempered family of bounded sets $\mathcal K_\eta(t)$ is called tempered pullback attractor if
 \par
 1) $\mathcal K_\eta(t)$ is compact in $E$ for every fixed $t\in\R$;
 \par
 2) It is strictly invariant: $U_\eta(t,\tau)\mathcal K_\eta(\tau)=\mathcal K_\eta(t)$;
\par
 3) It is a tempered (pullback) attracting set.
 \end{Def}
The next proposition is the analogue of Proposition \ref{Prop4.abs} for the tempered case, see \cite{Kloeden} for details.
\begin{prop}\label{Prop5.abs} Let $U_\eta(t,\tau)$ be a dynamical process in $E$ which is continuous in $E$ and possesses a compact (for every $t\in\R$) tempered attracting set. Then $U_\eta(t,\tau)$ possesses a tempered (pullback) attractor $\mathcal K_\eta(t)$ which possesses the following description:
\begin{equation}\label{5.strran}
\mathcal K_\eta(\tau)=\mathcal K_\eta\big|_{t=\tau},
\end{equation}
where $\mathcal K_\eta$ is the set of all complete tempered trajectories of $U_\eta(t,\tau)$ (=the tempered kernel of $U_\eta(t,\tau)$).
\end{prop}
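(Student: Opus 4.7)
The plan is to construct the desired tempered pullback attractor as the pullback $\omega$-limit set of the given compact tempered attracting family $\mathcal B(t)$, following the classical Chepyzhov--Vishik kernel-section construction adapted to the tempered setting:
\begin{equation*}
\mathcal K_\eta(t) := \bigcap_{s\ge 0}\Bigl[\bigcup_{\sigma\ge s} U_\eta(t,t-\sigma)\mathcal B(t-\sigma)\Bigr]_E,
\end{equation*}
where $[\cdot]_E$ denotes closure in $E$. I would then verify in order the defining properties (compactness, strict invariance, tempered attraction) together with the representation formula \eqref{5.strran}.

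First, applying the hypothesis that $\mathcal B(\cdot)$ attracts every tempered family to the tempered family $\mathcal B(\cdot)$ itself, I obtain $\dist_E(U_\eta(t,t-\sigma)\mathcal B(t-\sigma),\mathcal B(t))\to 0$ as $\sigma\to\infty$. This gives $\mathcal K_\eta(t)\subset \mathcal B(t)$, hence $\mathcal K_\eta(t)$ is relatively compact and automatically tempered. Non-emptiness and closedness follow from a standard diagonal extraction: for $\sigma_n\to\infty$ and $x_n\in\mathcal B(t-\sigma_n)$ the points $U_\eta(t,t-\sigma_n)x_n$ are eventually close to the compact set $\mathcal B(t)$, so a subsequence converges. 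Strict invariance $U_\eta(t,\tau)\mathcal K_\eta(\tau)=\mathcal K_\eta(t)$ is then obtained from the cocycle identity together with continuity of the process in $E$: the forward inclusion comes from passing to the limit under the continuous map $U_\eta(t,\tau)$, while the reverse inclusion uses that, for $\xi=\lim_n U_\eta(t,t-\sigma_n)x_n\in \mathcal K_\eta(t)$, the intermediate points $U_\eta(\tau,t-\sigma_n)x_n$ lie eventually in $\mathcal B(\tau)$ and therefore admit an accumulation point $\eta\in\mathcal K_\eta(\tau)$ with $U_\eta(t,\tau)\eta=\xi$.

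The tempered attraction property is then obtained from the corresponding property of $\mathcal B$ by a two-step semigroup argument: for any tempered family $B(t)$, on the one hand $U_\eta(t-s/2,t-s)B(t-s)$ is attracted to $\mathcal B(t-s/2)$, and on the other hand the subsequent evolution of bounded pieces of $\mathcal B(t-s/2)$ is attracted to $\mathcal K_\eta(t)$ by the construction itself; a standard $\delta/2$-covering argument pastes these two attraction statements together. As for the representation formula, the inclusion $\supseteq$ is immediate since any complete tempered trajectory forms a tempered family attracted to itself and hence is contained in the pullback limit. The reverse inclusion is obtained by iterated backward selection: given $\xi_0\in\mathcal K_\eta(t_0)$, strict invariance provides $\xi_{-n}\in \mathcal K_\eta(t_0-n)$ with $U_\eta(t_0,t_0-n)\xi_{-n}=\xi_0$, and interpolating these via the process yields a complete trajectory contained in $\mathcal K_\eta$, which is automatically tempered because $\mathcal K_\eta(t)\subset\mathcal B(t)$.

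The step I expect to require the most care is the backward selection of a consistent family $\{\xi_{-n}\}$ under the generally non-injective operators $U_\eta(t_0,t_0-n)$: this is where strict invariance of $\mathcal K_\eta$ is essential, and it must be coupled with compactness to guarantee that the diagonal extraction produces a single complete trajectory staying in $\mathcal K_\eta$. All the remaining steps are routine adaptations of the deterministic kernel-section theory; the novelty lies solely in tracking the tempered class at each stage, and this is controlled throughout by the inclusion $\mathcal K_\eta(t)\subset\mathcal B(t)$ established in the first step.
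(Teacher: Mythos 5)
The paper does not actually prove this proposition: it is stated as the tempered analogue of Proposition 4.2 and the proof is delegated entirely to the cited reference of Kloeden and Langa. Your construction of $\mathcal K_\eta(t)$ as the pullback $\omega$-limit set of the compact tempered attracting family $\mathcal B(t)$, and your verification of compactness, strict invariance and the representation formula, is precisely the standard argument that this citation covers, and those parts are correct. (One simplification: for the inclusion $\mathcal K_\eta(t_0)\subset\mathcal K_\eta\big|_{t=t_0}$ the recursive one-step selection $\xi_{-n-1}\in\mathcal K_\eta(t_0-n-1)$ with $U_\eta(t_0-n,t_0-n-1)\xi_{-n-1}=\xi_{-n}$, available from strict invariance, already produces a consistent complete trajectory, so the diagonal extraction you flag as delicate can be avoided altogether.)

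The one step that fails as written is the tempered attraction property. The splitting $U_\eta(t,t-s)=U_\eta(t,t-s/2)\circ U_\eta(t-s/2,t-s)$ requires $\dist_E(U_\eta(\tau,\tau-\sigma)B(\tau-\sigma),\mathcal B(\tau))$ to be small along $\tau=t-s/2$, $\sigma=s/2$ as $s\to\infty$; but the hypothesis gives pullback attraction only for each \emph{fixed} final time $\tau$, with no uniformity in $\tau$, so the inner factor of your decomposition is not controlled as the intermediate time recedes to $-\infty$. The standard repair uses a fixed intermediate lag and a contradiction argument: if $\dist_E(U_\eta(t,t-s_n)x_n,\mathcal K_\eta(t))\ge\delta$ for some $x_n\in B(t-s_n)$ with $s_n\to\infty$, then attraction of $B$ to $\mathcal B$ at the final time $t$ plus compactness of $\mathcal B(t)$ yield a convergent subsequence $U_\eta(t,t-s_n)x_n\to\xi$; for each fixed $m$ one writes $U_\eta(t,t-s_n)x_n=U_\eta(t,t-m)U_\eta(t-m,t-s_n)x_n$, applies pullback attraction at the fixed final time $t-m$ together with compactness of $\mathcal B(t-m)$ and continuity of $U_\eta(t,t-m)$ to conclude $\xi\in U_\eta(t,t-m)\mathcal B(t-m)$, and then lets $m\to\infty$ to get $\xi\in\mathcal K_\eta(t)$, a contradiction. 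With this substitution your proof is complete and coincides with the argument the paper outsources to the literature.
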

\begin{rem} The assumption that the attracting set is tempered can be relaxed if we assume the existence of a {\it tempered} absorbing set. Then, as usual only asymptotic compactness is necessary to get the existence of an attractor. However, the assumption that the absorbing set is tempered is important in order to have the representation formula \eqref{5.strran}. This property may be lost if the absorbing ball is not tempered and this, in turn, leads to many pathological effects (non-uniqueness of the attractor, etc.).
\end{rem}
The next theorem can be considered as the main result of this section.

\begin{theorem}\label{Th5.main} Let the nonlinearity $f\in C^1(\R)$ satisfy assumptions \eqref{2.f} with $p<4$ and let $\gamma\in L^1_{tr-c}(\R)$ be such that \eqref{5.bir} hold.
Then, for almost all $\eta\in\mathcal H(\gamma)$, the dynamical process $U_\eta(t,\tau)$ associated with equation \eqref{4.wave} possesses a pullback attractor $\mathcal K_\eta(t)$ which is  tempered in $E^\beta$ for some $\beta>0$.
\end{theorem}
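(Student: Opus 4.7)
My plan is to follow the same architecture as the proof of the uniform attractor (Theorem \ref{Th4.unattr}) but replace everywhere ``uniform in $\tau$'' by ``pullback in $\tau$, for $\mu$-a.e.\ $\eta$''. The engine that makes this substitution work is the Birkhoff ergodic theorem, which converts the integrated mean condition \eqref{5.bir} into the pathwise statement \eqref{5.ergcor} for a.e.\ $\eta\in\mathcal H(\gamma)$. In particular, writing $\beta_\eta(t):=\tfrac12\eta_+(t)-\tfrac{p+2}{p+4}\eta_-(t)$ and fixing a full-measure set $\mathcal H_0\subset\mathcal H(\gamma)$, I would obtain that for every $\eta\in\mathcal H_0$ there exist $\alpha=\alpha(\eta)>0$ and a function $K_\eta:\mathbb R\to\mathbb R_+$ with
\begin{equation*}
\int_s^t\beta_\eta(\tau)\,d\tau\ge 2\alpha(t-s)-K_\eta(s),\qquad t\ge s,
\end{equation*}
and $K_\eta(s)$ grows sub-exponentially as $s\to-\infty$ (this sub-exponential bound is the tempered analog of the constant appearing in \eqref{2.betag}).

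First, I would revisit the differential inequality \eqref{2.good} derived in the proof of Theorem \ref{Th2.main}; its derivation uses only the local regularity of $\gamma$ and the mollification trick and goes through verbatim with $\gamma$ replaced by $\eta\in\mathcal H_0$. Integrating this inequality by the variation of constants formula from $-\infty$ and using the pathwise lower bound on $\int\beta_\eta$, I would define
\begin{equation*}
r_\eta(t):=C\int_{-\infty}^t\bigl(1+\|g\|_{L^2}^2+|\eta(s)|\bigr)\,e^{-\int_s^t\beta_\eb(l)\,dl}\,ds.
\end{equation*}
The first task is to verify that $r_\eta(t)$ is finite and tempered for a.e.\ $\eta$: finiteness follows because the exponential weight decays like $e^{-2\alpha(t-s)}$ as $s\to-\infty$ while $|\eta(\cdot)|\in L^1_b$ and $K_\eta(s)$ is sub-exponential; temperedness in $t$ then follows from another application of the ergodic theorem to the shift-invariant functional $\eta\mapsto r_\eta(0)$ combined with the cocycle identity $r_{T_t\eta}(0)=r_\eta(t)$. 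This provides the tempered pullback absorbing ball $\mathbb B_\eta(t)=\{\xi\in E:\|\xi\|_E^2\le r_\eta(t)\}$ required by Proposition \ref{Prop5.abs}.

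Second, I would upgrade this to a tempered pullback absorbing set in the higher space $E^\beta$ by redoing the splitting argument of Section \ref{s3}. The linear tool in Proposition \ref{Prop3.lin} was proved under the uniform version of \eqref{2.Negative}, but inspection of its proof shows that it adapts to the pathwise setting: the inequality \eqref{3.grrr} still holds with $\bar\gamma$ replaced by $\bar\eta$, and integrating with the pathwise bound on $\int\beta_\eta$ gives, in place of \eqref{4.lindis}, a pullback estimate of the form
\begin{equation*}
\|\xi_v(t)\|_E^2\le C\,\|\xi_v(\tau)\|_E^2\,e^{-\alpha(t-\tau)}e^{K_\eta(\tau)}+C\Bigl(\int_\tau^t e^{-\alpha(t-s)+K_\eta(s)}\|h(s)\|_{L^2}\,ds\Bigr)^2.
\end{equation*}
With this in hand, the decomposition $u=v+G+w$ and the bootstrap of Theorem \ref{Th3.main} go through and yield, after finitely many iterations, a pullback exponentially attracting family of balls in $E^\beta$ for $\beta=1$ whose radii depend measurably on $\eta$ and are tempered; the last property is inherited from the temperedness of $r_\eta(t)$ together with the H\"older/polynomial dependence of the bootstrap constants on the energy norm.

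Finally, having constructed a tempered, closed, bounded subset of $E^\beta\hookrightarrow E$ (hence compact in $E$) which is pullback attracting for $U_\eta(t,\tau)$, Proposition \ref{Prop5.abs} combined with the continuity of $U_\eta$ (Corollary \ref{Cor2.lip}) yields the existence of the tempered pullback attractor $\mathcal K_\eta(t)$ and the representation \eqref{5.strran}. The hardest point of this program is the one in the middle: propagating the temperedness through the bootstrap of Section \ref{s3}. The obstacle is that the loss factor $e^{K_\eta(s)}$ inside the variation-of-constants integrals multiplies the $L^{12}$-Strichartz norms appearing in the nonlinear estimates $\|f(u)\|_{L^1(t,t+1;H^\beta)}$, so one must check that the resulting compounded bounds remain sub-exponential in $t\to-\infty$ for a.e.\ $\eta$; this is where the ergodic invariance of $\mu$, the translation-compactness \eqref{2.trc}, and the subcriticality $p<4$ (which makes the exponent $N_p$ in \eqref{2.polc} finite) together do the work.
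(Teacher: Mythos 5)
Your overall architecture is the paper's own: Birkhoff converts \eqref{5.bir} into the pathwise statement \eqref{5.bir1}, the Gronwall-type estimate \eqref{2.Gron} integrated from $-\infty$ produces the tempered absorbing radius \eqref{5.rad}, the splitting of Section \ref{s3} is rerun pathwise to get a tempered attracting set in $E^\beta$, and Proposition \ref{Prop5.abs} closes the argument. There is, however, one step in the middle that would fail as stated. You propose to deduce temperedness of $t\mapsto r_\eta(t)$ ``from another application of the ergodic theorem to the shift-invariant functional $\eta\mapsto r_\eta(0)$''. The Birkhoff theorem (and the standard corollary $\frac1n F(T^n\eta)\to0$ a.e.) requires $F\in L^1(\mu)$, and precisely this integrability can fail here: the computation \eqref{7.besk} shows that $\int_{\mathcal H(\gamma)}R_\eta(0)\,\mu(d\eta)=\infty$ whenever \eqref{6.inf} holds, and Proposition \ref{Prop6.cerg} shows the ergodic averages then diverge. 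So this route breaks exactly in the regime the paper is most interested in. The correct mechanism is the one isolated in Lemma \ref{Lem5.cru}: temperedness is proved \emph{directly} from the two-sided bound \eqref{5.nice} on $\int_s^0\beta_{\eb,\eta}$ (both inequalities of the Birkhoff limit are needed, not just the lower bound guaranteeing decay) together with the temperedness of $t\mapsto\int_t^{t+1}|\eta(s)|\,ds$. In fact your own hypothesis that $K_\eta(s)=o(|s|)$ already makes this direct computation work, so the appeal to the ergodic theorem on $r_\eta(0)$ is both invalid and unnecessary; you should simply delete it and run the Lemma \ref{Lem5.cru} estimate.

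A second, smaller point: you iterate the bootstrap of Theorem \ref{Th3.main} ``to $\beta=1$''. The theorem only claims temperedness in $E^\beta$ for \emph{some} $\beta>0$, and a single step of the splitting (giving $\beta=1-\frac p4$) suffices, using \eqref{2.polc} to check that $\tau\mapsto\|h_u\|_{L^1(\tau,\tau+1;H^\beta)}$ is tempered. Iterating further would require a tempered analogue of the transitivity of exponential attraction (to glue the successive attracting families back to the original phase space), which the paper deliberately avoids developing; if you want $E^1$ you must supply that lemma.
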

\begin{proof} To verify the conditions of Proposition \ref{Prop5.abs} we need to adapt the proofs of Theorems \ref{Th2.main} and \ref{Th3.main} to the random case. This adaptation is almost straightforward, so we briefly indicate below the main difference related with verification that the obtained absorbing/attracting sets are tempered. We start with Theorem \ref{Th2.main} which gives us the existence of an absorbing set.
\par
Indeed, arguing exactly as in the proof of Theorem \ref{Th2.main}, we derive estimate \eqref{2.Gron} with $\beta_\eb(t)=\beta_{\eb,\eta}$ defined by \eqref{2.beta} (where $\gamma$ is replaced by $\eta\in\mathcal H(\gamma)$). Moreover, defining $\bar\gamma$ (resp. $\bar\eta$) using \eqref{2.mol}, we see that the function $\eta\to\int^1_0\beta_{\eb,\eta}(s)\,ds$ is continuous on a compact set $\mathcal H(\gamma)$. By this reason its mean value exists
$$
\int_{\eta\in\mathcal H(\gamma)}\(\int_0^1\beta_{\eb,\eta}(s)\,ds\)\mu(d\eta)=\bar\beta_\eb<\infty
$$
and fixing $\eb>0$ and $\kappa>0$, we may assume that $\bar\beta_\eb>0$ (due to assumption \eqref{5.bir}). The Birkhoff ergodic theorem now gives that, for every $\tau\in\R$
\begin{equation}\label{5.bir1}
\lim_{T\to\infty}\frac1T\int_{\tau-T}^\tau\beta_{\eb,\eta}(s)\,ds=
\lim_{T\to\infty}\frac1T\int_{\tau}^{\tau+T}\beta_{\eb,\eta}(s)\,ds=\bar\beta_\eb>0.
\end{equation}
for almost all $\eta\in\mathcal H(\gamma)$. Of course, $T_s\eta$ also satisfies \eqref{5.bir1} if $\eta$ does, so there is a full measure set $\mathcal H_{erg}\subset \mathcal H(\gamma)$ invariant with respect to $T_s$ such that \eqref{5.bir1} is satisfied for all $\eta\in\mathcal H_{erg}$.
\par
The following technical lemma is crucial for what follows.
\begin{lemma}\label{Lem5.cru} Let $\phi\in L^1_{loc}(\R)$ be such that the function $t\to\int_t^{t+1}|\phi(s)|\,ds$ be tempered and let $\beta_\eb\in L^1_b(\R)$ satisfy \eqref{5.bir1}. Then the function
\begin{equation}\label{5.R}
R(t):=\int_{-\infty}^t\phi(s)e^{-\int_s^t\beta_\eb(l)\,dl}\,ds
\end{equation}
is well-defined and tempered. Moreover, if $A(t)$ is tempered then there exists $\alpha>0$ such that
\begin{equation}\label{5.A}
\lim_{s\to\infty}e^{-\alpha (t-s)}A(t-s)e^{-\int_s^t\beta_\eb(l)\,dl}=0
\end{equation}
for all $t\in\R$.
\end{lemma}
\begin{proof}[Proof of the lemma] Without loss of generality we may assume that $t\le0$. Then we split
$$
\int_s^t\beta_\eb(l)\,dl=\int_s^0\beta_\eb(l)\,dl-\int_t^0\beta_\eb(l)\,dl
$$
and write
$$
|R(t)|\le e^{\int_t^0\beta_\eb(l)\,dl}\int_{-\infty}^t|\phi(s)|e^{-\int^0_s\beta_\eb(l)\,dl}\,ds.
$$
Moreover, due to \eqref{5.bir1}, for every $\nu>0$, we may write
\begin{equation}\label{5.nice}
-(\bar\beta_\eb-\nu)s-C_\nu\le\int_s^0\beta_\eb(l)\,dl\le -(\bar\beta_\eb+\nu)s+C_\nu
\end{equation}
(for some positive $C_\nu$) and the previous estimate reads
$$
|R(t)|\le C'_\nu e^{-(\bar\beta_\eb+\nu)t}\int_{-\infty}^t|\phi(s)|e^{(\bar\beta_\eb-\nu)s}\,ds.
$$
Finally, we utilize the fact that $\int_t^{t+1}|\phi(s)|\,ds$ is tempered. By this reason
$$
\int_{t}^{t+1}|\phi(s)|\,ds\le C_\nu e^{-t\nu/2}
$$
and
$$
|R(t)|\le C_\nu'e^{-(\bar\beta_\eb+\nu)t}\int_{-\infty}^te^{ (\bar\beta_\eb-\frac32\nu)s}\,ds\le
C_\nu''e^{-\frac52\nu t}.
$$
Since $\nu>0$ is arbitrary, the function $R(t)$ is indeed tempered.
\par
To verify the second statement, we note that it is enough to check \eqref{5.A} for $t=0$ only (the uniformity with respect to $t\in\R$ is not assumed in \eqref{5.A}). Then, estimate \eqref{5.nice} immediately gives us \eqref{5.A} for any $\alpha<\bar\beta_\eb$. Thus, the lemma is proved.
\end{proof}
We are now ready to finish the proof of the theorem. Indeed, let $\eta\in\mathcal H_{erg}$ and
\begin{equation}\label{5.rad}
R_\eta(t):=2C\int_{-\infty}^t(1+\|g\|^2_{L^2}+|\eta(s)|)e^{-\int_s^t\beta_{\eb,\eta}(l)\,dl}\,ds,
\end{equation}
where all of the constants and functions are the same as in \eqref{2.Gron}. Then, according to estimate \eqref{2.Gron} and Lemma \ref{Lem5.cru}, the set
$$
\mathcal B^0_E(R_\eta(t)):=\{\xi\in E,\ \mathcal E(\xi)\le R_\eta(t)\}
$$
is a tempered absorbing set for the process $U_\eta(t,\tau)$ associated with wave equation \eqref{4.wave}. Moreover, the fact that $R_{\eta}(t)$ solves the integral equation
$$
R_\eta(t)=R_\eta(\tau)e^{-\int_\tau^t\beta_{\eb,\eta}(s)\,ds}+2C\int_{\tau}^t(1+\|g\|^2_{L^2}+|\eta(s)|)e^{-\int_s^t\beta_{\eb,\eta}(l)\,dl}\,ds
$$
for all $\tau\le t$ (compare with \eqref{2.Gron}), gives that this absorbing set is invariant:
$$
U_{\eta}(t,\tau)\mathcal B^0_E(R_{\eta}(\tau)\subset\mathcal B^0_E(R_{\eta}(t)).
$$
To obtain the tempered compact attracting set we need to get random analogues of estimates proved in Section \ref{s3}. We first observe that the analogue of estimate \eqref{4.lindis} for the random case reads
$$
\mathcal E_v(t)\le C\mathcal E_v(\tau)e^{-\int_\tau^t\beta_{\eb,\eta}(l)\,dl}+C\(\int_s^te^{-\beta_{\eb,\eta}(l)\,dl}\|h(s)\|_{L^2}\,ds\)^2
$$
and big constant $L$ can be chosen uniformly with respect to $\eta\in\mathcal H(\gamma)$, see \eqref{3.grr} and \eqref{3.grrr}. Thus, the solution $\xi_v(t)$ of \eqref{3.lind} starting from a tempered absorbing set $\xi_u(\tau)\in\mathcal B^0_E(R_\eta(\tau))$ will be exponentially decaying as $\tau\to-\infty$ according to Lemma \ref{Lem5.cru}.
\par
By this reason, to verify the existence of a tempered attracting set, it is sufficient to check that the function $h_u(t)$ in the right-hand side of \eqref{3.rem} is tempered as $\tau\to-\infty$. To be more precise, we need to show that the function $\tau\to \|h_u\|_{L^1(\tau,\tau+1;H^\beta)}$ is tempered as $\tau\to-\infty$. But this is an immediate corollary of the fact that $\xi_u(t)$ belongs to a tempered absorbing ball and estimate \eqref{2.polc}, see the proof of Theorem \ref{Th3.main}.
\par
Thus, the tempered compact attracting set for $U_\eta(t,\tau)$ is constructed (as a tempered ball in smoother space $E^\beta$, $\beta>0$) and the theorem is proved.
\end{proof}
\begin{rem} Arguing analogously to the proof of Theorem \ref{Th3.main}, we may prove that the attractor $\mathcal K_\eta(t)$ is a tempered set in $E^1$, but to this end we need the tempered version for transitivity of exponential attraction. Since we need not this regularity for what follows, we prefer not to discuss this topic here.
\par
We also note that the radius $R_\eta(t)$ of the absorbing ball is {\it measurable} (for every fixed $t$ as a function of $\eta\in\mathcal H(\gamma)$. Indeed, it can be presented as an almost everywhere limit of continuous functions:
$$
R_\eta(t):=\lim_{n\to\infty}\int_{-n}^t(1+\|g\|^2_{L^2}+|\eta(s)|)e^{-\int_s^t\beta_{\eb,\eta}(l)\,dl}\,ds.
$$
By this reason, the set-valued function $\eta\to \mathcal B_E^0(R_\eta(t))$ is measurable. Then, by standard arguments, see e.g., \cite{CF,ShirZ}, the attractor $\mathcal K_\eta(t)$ is also measurable as a set-valued function $\eta\to\mathcal K_\eta(t)$ for every fixed $t$.
\end{rem}
We are now ready to complete the construction of a random attractor for equation \eqref{4.wave}. We first remind its definition adapted to our case, see \cite{Kloeden} for more details. First, by definition, a random tempered set is a measured set-valued function $\eta\to B(\eta)\subset E$ such that the function $t\to \|B(T_t\eta)\|_E$ is tempered for almost all $\eta$.
\begin{Def}\label{Def5.ran} A random tempered set $\eta\to\mathcal A(\eta)\subset E$ is a random attractor for the family of processes $U_\eta(t,\tau)$, $\eta\in\mathcal H(\gamma)$ if
\par
1) $\mathcal A(\eta)$ is compact for almost all $\eta\in\mathcal H(\gamma)$;
\par
2) It is strictly invariant: $U_{\eta}(t,0)\mathcal A(\eta)=A(T_t\eta)$ for all $t\ge0$ and almost all~$\eta$.
\par
3) For any tempered random set $\eta\to B(\eta)$ and almost all $\eta\in\mathcal H(\gamma)$,
\begin{equation}\label{5.vzad}
\lim_{\tau\to-\infty}\dist_E(U_\eta(0,\tau)B(T_\tau\eta),\mathcal A(\eta))=0.
\end{equation}
\end{Def}
\begin{cor}\label{Cor5.main}
  Let the assumptions of Theorem \ref{Th5.main} hold. Then the family of processes $U_\eta(t,\tau):E\to E$, $\eta\in\mathcal H(\gamma)$ possesses a random attractor $\mathcal A(\eta)$ which is a tempered random set in $E^\beta$ for some $\beta>0$.
\end{cor}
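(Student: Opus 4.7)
The plan is to set $\mathcal{A}(\eta) := \mathcal{K}_\eta(0)$ where $\mathcal{K}_\eta(\cdot)$ is the tempered pullback attractor from Theorem \ref{Th5.main}, and verify the four requirements of Definition \ref{Def5.ran} by converting the pullback properties into random ones via the translation identity $U_\eta(t+h,\tau+h) = U_{T_h\eta}(t,\tau)$ established in Section \ref{s4}.

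The main structural ingredient is the identification $\mathcal{K}_\eta(t) = \mathcal{K}_{T_t\eta}(0)$ valid for almost every $\eta$ and every $t\in\R$. This is essentially tautological: if $\xi_u(\cdot)$ is a complete tempered trajectory of \eqref{4.wave} with symbol $\eta$, then $s\mapsto \xi_u(s+t)$ is a complete tempered trajectory with symbol $T_t\eta$ (and vice versa), thanks to the translation identity, so the two kernels correspond bijectively under the time shift; evaluating at $s=0$ gives the claim. Combining this with the strict invariance $U_\eta(t,0)\mathcal{K}_\eta(0) = \mathcal{K}_\eta(t)$ already supplied by Theorem \ref{Th5.main} yields the random strict invariance $U_\eta(t,0)\mathcal{A}(\eta) = \mathcal{A}(T_t\eta)$ of item 2) in Definition \ref{Def5.ran}. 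Compactness of $\mathcal{A}(\eta)$ is inherited from compactness of $\mathcal{K}_\eta(0)$, and the temperedness of $\mathcal{A}$ as a random set in $E^\beta$ is the statement that $t\mapsto \|\mathcal{A}(T_t\eta)\|_{E^\beta} = \|\mathcal{K}_\eta(t)\|_{E^\beta}$ is tempered, which is exactly what Theorem \ref{Th5.main} provides.

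For the random attraction property \eqref{5.vzad}, observe that if $\eta\mapsto B(\eta)$ is a tempered random set, then for almost every fixed $\eta$ the deterministic family $\tau\mapsto B(T_\tau\eta)$ is a tempered family of bounded sets in $E$ (this is the very definition of a tempered random set). The pullback attraction of $\mathcal{K}_\eta(\cdot)$ at $t=0$ supplied by Theorem \ref{Th5.main} then yields
\[
\lim_{\tau\to-\infty}\dist_E\bigl(U_\eta(0,\tau)B(T_\tau\eta),\mathcal{K}_\eta(0)\bigr)=0,
\]
which is precisely \eqref{5.vzad}.

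The only genuinely nontrivial point, and the one I expect to be the main technical obstacle, is measurability of the set-valued map $\eta\mapsto\mathcal{A}(\eta)$. The remark following Theorem \ref{Th5.main} already records that $\eta\mapsto R_\eta(0)$ is measurable, being an almost-everywhere limit of continuous functions of $\eta$, and hence so is the set-valued map $\eta\mapsto \mathcal{B}^0_E(R_\eta(0))$. Combining this with continuity of the solution map $\eta\mapsto U_\eta(0,\tau)$ in the $L^1_{loc}$-topology on $\mathcal{H}(\gamma)$ and with the representation of $\mathcal{K}_\eta(0)$ as the $\omega$-limit of the tempered absorbing ball pulled back along $U_\eta(0,\tau)$, the standard Crauel--Flandoli machinery (see \cite{CF,ShirZ}) delivers measurability of $\eta\mapsto \mathcal{A}(\eta)$ and completes the proof.
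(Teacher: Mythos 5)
Your proposal is correct and follows essentially the same route as the paper: the paper's own proof simply sets $\mathcal A(\eta):=\mathcal K_\eta(0)$ on the shift-invariant full-measure set $\mathcal H_{erg}(\gamma)$ (and $\varnothing$ elsewhere) and declares the properties of Definition \ref{Def5.ran} ``verified above,'' relegating measurability to the preceding remark via \cite{CF,ShirZ}. You merely make explicit the details the paper leaves implicit --- in particular the identification $\mathcal K_\eta(t)=\mathcal K_{T_t\eta}(0)$ via the translation identity, which is the correct way to pass from pullback strict invariance to the random strict invariance $U_\eta(t,0)\mathcal A(\eta)=\mathcal A(T_t\eta)$.
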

\begin{proof}
Indeed, we may define
\begin{equation}
\mathcal A(\eta):=\begin{cases} \mathcal K_\eta(0),\ \ \eta\in\mathcal H_{erg}(\gamma),\\
                  \varnothing,\ \ \eta\notin\mathcal H_{erg}(\gamma),\end{cases}
\end{equation}
where $\mathcal K_\eta(t)$ is the tempered pullback attractor constructed in Theorem \ref{Th5.main}. Then, all assertions of Definition \ref{Def5.ran} are verified above and the desired attractor is constructed.
\end{proof}
\begin{rem}\label{Rem5.vperd} As we have already mentioned, a pullback attractor in general fails to attract bounded sets forward in time. The situation is much better for the case of {\it random} attractors where the forward convergence in measure usually holds, see \cite{CF}. Indeed, the Lebesgue dominated convergence theorem allows us to conclude from almost everywhere convergence \eqref{5.vzad} that
$$
\int_{\mathcal H(\gamma)}\frac{\dist_E(U_\eta(0,\tau)B(T_\tau\eta),\mathcal A(\eta))}{1+\dist_E(U_\eta(0,\tau)B(T_\tau\eta),\mathcal A(\eta))}\mu(d\eta)\to0
$$
as $\tau\to-\infty$. Using now the translation identity and the fact that $T_\tau$ is measure preserving, after the change of variable $\eta\to T_\tau\eta$, we arrive at
$$
\int_{\mathcal H(\gamma)}
\frac{\dist_E(U_\eta(\tau,0)B(\eta),\mathcal A(T_\tau\eta))}
{1+\dist_E(U_\eta(\tau,0)B(\eta),\mathcal A(T_\tau\eta))}\mu(d\eta)\to0
$$
as $\tau\to+\infty$. It remains to note that the last convergence is equivalent to the desired forward convergence in measure
\begin{equation}\label{5.vperd}
\mu\!-\!\!\lim_{\tau\to\infty} \dist_E(U_\eta(0,\tau)B(\eta),\mathcal A(T_\tau\eta))=0.
\end{equation}
\end{rem}
We complete this section by considering a natural model example when the dynamics $T_s:\mathcal H(\gamma)\to\mathcal H(\gamma)$ is determined by the Bernoulli shift dynamics. Namely, let $\Gamma:=\{a,-b\}^{\Bbb Z}$ be two sided Bernoulli scheme with two symbols $\{a,-b\}$ and let
$$
(T_l\gamma)(n)=\gamma(n+l),\ \ l\in\Bbb Z,\   \gamma=(\cdots,\gamma_{-n},\cdots,\gamma_n,\cdots)\in\Gamma
$$
be the associated Bernoulli process. We endow the set $\Gamma$ by the Tichonoff topology and by the Borel probability product measure $\mu$ generated by the probability measure on a cross section
$$
\mu(\{a\})=q,\ \ \mu(\{-b\})=1-q,\ \ q\in(0,1).
$$
Then, as known, see e.g. \cite{KH}, the dynamical system $(T_l,\Gamma,\mu)$ is transitive and ergodic. We extend any
element $\gamma\in\Gamma$ to a function $\mathcal R(\gamma)\in L^\infty(\R)$ as follows:
\begin{equation}
\mathcal R(\gamma)(t):=\gamma_{[t]}
\end{equation}
where $[t]$ is an integer part of $t$. Then, obviously,
$$
T_l\mathcal R(\gamma)=\mathcal R(T_l\gamma)
$$
for all $l\in\Bbb Z$. It is not difficult to show that the Tychonoff topology on $\Gamma$ induces the $L^1_{loc}$-topology on $\mathcal R(\gamma)$, so $\mathcal R(\gamma)$ is translation compact for any $\gamma\in\Gamma$. Moreover, if we take any transitive trajectory $\gamma\in\Gamma$, the hull $\mathcal H(\mathcal R(\gamma))$ will contain the image of the whole Bernoulli scheme $\Gamma$.
\par
Thus, the elements of the hull $\mathcal H(\gamma):=\mathcal H(\mathcal R(\gamma))$ are parameterized by the elements $\eta\in\Gamma$ and the dynamics on the hull is equivalent to the classical Bernoulli shift dynamics. Being pedantic, we first need to extend the discrete Bernoulli shifts to the continuous ones acting on the extended space $\Gamma\times[0,1]$ (to parameterize the non-integer shifts) and only after that establish the equivalence, but to avoid the technicalities we omit this step and will identify (with a slight abuse of rigor) the element $\gamma\in\Gamma$ with $\mathcal R(\gamma)\in L^\infty(\R)$ as well as discrete shifts on $\Gamma$ with continuous shifts on $\mathcal H(\gamma)$.
\par
The key condition \eqref{5.bir} now reads
\begin{equation}\label{5.beer}
\int_{\eta\in\mathcal H(\gamma)}\int_0^1\(\frac12\gamma_+(s)-\frac{p+2}{p+4}\gamma_-(s)\)ds\,\mu(d\eta)=\frac12 aq-\frac{p+2}{p+4}b(1-q)>0.
\end{equation}
Thus, we have finally proved the following result.

\begin{theorem}\label{Th5.ber} Let the exponents $a$ and $b$ and the probability $q$ satisfy assumption \eqref{5.beer}. Then, the wave equation \eqref{4.wave} with damping rate $\eta$ generated by the above described Bernoulli process possesses a random attractor $\mathcal A(\eta)$ in the energy space $E$.
\end{theorem}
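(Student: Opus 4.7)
The plan is to verify that the Bernoulli setup falls into the framework of Corollary \ref{Cor5.main}, and then simply invoke that corollary. Thus all the nontrivial analysis (dissipative estimate, asymptotic smoothing, pullback absorbing/attracting sets) is already in place, and the task reduces to checking four items for $\gamma:=\mathcal R(\gamma)\in L^\infty(\R)$ with $\gamma$ a generic element of $\Gamma$, and for the measure $\mu$ pushed forward onto $\mathcal H(\gamma)$: (i) translation-compactness of $\gamma$ in $L^1_b(\R)$; (ii) identification of $\mathcal H(\gamma)$ with the Bernoulli space (up to the discrete/continuous suspension); (iii) invariance and ergodicity of $\mu$ with respect to the continuous shift $T_s$; (iv) the quantitative positivity condition \eqref{5.bir}.

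For (i), since $\mathcal R(\gamma)$ is piecewise constant on integer intervals with values in $\{a,-b\}$, a small shift only affects the integral near the integer points in $[t,t+1]$, and the elementary bound
\begin{equation*}
\sup_{t\in\R}\int_t^{t+1}|\gamma(\tau+h)-\gamma(\tau)|\,d\tau\le (a+b)(|h|+o(1))
\end{equation*}
gives a uniform modulus of continuity in the sense of \eqref{2.mod}, hence $\gamma\in L^1_{tr-c}(\R)$. For (ii), I would first choose a transitive trajectory $\gamma^*\in\Gamma$ for the shift $T_1$ on $\Gamma$ (whose existence follows from transitivity of the Bernoulli shift), so that the orbit $\{T_n\gamma^*\}_{n\in\Bbb Z}$ is dense in $\Gamma$ in the Tychonoff topology; since the Tychonoff topology on $\Gamma$ corresponds to the $L^1_{loc}(\R)$-topology on $\mathcal R(\Gamma)$, the hull $\mathcal H(\mathcal R(\gamma^*))$ contains $\mathcal R(\Gamma)$. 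To include non-integer translates, I pass to the suspension $\Gamma\times[0,1]$ with the equivalence $(\eta,1)\sim(T_1\eta,0)$ and the obvious continuous $\R$-action, and identify $\mathcal H(\gamma)$ with this suspension via $(\eta,s)\mapsto T_s\mathcal R(\eta)$.

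For (iii), the product measure $\mu$ on $\Gamma$ pulled back to $\Gamma\times[0,1]$ as $\mu\otimes ds$ is obviously invariant under the suspension flow; its ergodicity is a classical consequence of the ergodicity of the discrete Bernoulli shift $(T_1,\Gamma,\mu)$ via the standard suspension/special flow argument. This induces an invariant and ergodic Borel probability measure on $\mathcal H(\gamma)$ with respect to $\{T_s\}_{s\in\R}$, which I continue to denote $\mu$. For (iv), the integral in \eqref{5.bir} is a piecewise constant function of $\eta$ depending only on $\eta_0\in\{a,-b\}$, so the computation reduces to
\begin{equation*}
\int_{\mathcal H(\gamma)}\!\!\int_0^1\!\!\left(\tfrac12\eta_+-\tfrac{p+2}{p+4}\eta_-\right)ds\,\mu(d\eta)=\tfrac{aq}{2}-\tfrac{(p+2)b(1-q)}{p+4},
\end{equation*}
which is positive precisely by \eqref{5.beer}. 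With (i)--(iv) in place, Corollary \ref{Cor5.main} applies and delivers the random attractor $\mathcal A(\eta)$ in $E$ (indeed, tempered in $E^\beta$ for some $\beta>0$).

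The only genuinely delicate point is the discrete-to-continuous lifting in step (ii)--(iii): one must check that the Borel structure, the invariance and, most importantly, the ergodicity transfer correctly from the discrete Bernoulli system to the continuous shift on the hull. This is standard (suspension flows preserve ergodicity when the base is ergodic and the roof function is constant and positive), but is what the paper itself chose to gloss over with the remark about ``a slight abuse of rigor''; making it precise is the only nontrivial obstacle, and everything else is either a one-line computation or a direct citation of the results already established in Sections \ref{s2}--\ref{s5}.
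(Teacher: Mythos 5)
Your proposal is correct and follows essentially the same route as the paper: verify translation-compactness of $\mathcal R(\gamma)$ via the $L^1$-modulus of continuity, identify the hull with the Bernoulli scheme through a transitive trajectory, transfer the invariant ergodic product measure to the continuous shifts, check that \eqref{5.bir} reduces to \eqref{5.beer}, and invoke Corollary \ref{Cor5.main}. The only difference is that you spell out the constant-roof suspension argument for ergodicity of the continuous shift, which the paper explicitly chooses to omit ``to avoid the technicalities''.
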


\section{Infinite-dimensionality of random attractors: a toy example}\label{s6}

The results on the existence of random attractors stated in Section \ref{s5} look more or less standard, see e.g., \cite{Kloeden} and references therein. Nevertheless, there is an {\it essential} difference between this case and the random attractors considered in the above mentioned works. This difference becomes transparent if we try to compute the mean of the size of tempered absorbing set constructed in Theorem \ref{Th5.main}. Indeed, taking the mean of the expression \eqref{5.rad}, we get
\begin{multline}\label{7.besk}
\int_{\eta\in\mathcal H(\gamma)}R_\eta(0)\mu(d\eta)\sim C\int_{\eta}\(\sum_{k=-\infty}
^0e^{-\eb k-2\sum_{l=-k}^0\frac12\eta_+(l)-\frac{p+2}{p+4}\eta_-(l)}\)\mu(d\eta)=\\=
\sum_{k=-\infty}^0e^{\eb (-k+1)}\int_\eta\(\Pi_{l=-k}^0e^{-\eta_+(l)+\frac{2(p+2)}{p+4}\eta_-(l)}\)\mu(d\eta)=\\=\sum_{k=-\infty}^0e^{-\eb k}\(\int_\eta\(e^{-\eta_+(0)+\frac{2(p+2)}{p+4}\eta_-(0)}\)\mu(d\eta)\)^{-k+1}=\\=\sum_{k=0}^\infty e^{(k+1)(\eb+\ln(e^{-a}q+e^{\frac{2(p+2)}{p+4}b}(1-q)))},
\end{multline}
where we have used that $\eta(n)$ and $\eta(m)$ are independent as random variables if $m\ne n$. Thus, the expression in the right-hand side will be finite if and only if
$$
\ln(e^{-a}q+e^{\frac{2(p+2)}{p+4}b}(1-q))<-\eb<0.
$$
So, if the probability $q$ and the exponents $a>0$ and $b>0$ be such that
\begin{equation}\label{6.inf}
\ln(e^{-a}q+e^{\frac{2(p+2)}{p+4}b}(1-q))>0>-aq+\frac{2(p+2)}{p+4}b(1-q),
\end{equation}
the obtained energy bound $R_\eta(t)$ will have {\it infinite} mean (in contrast to \cite{CF,CF1} where this mean is always finite).
\par
The absence of the first momentum for the energy may make a drastic impact on the underlying limit dynamics, in particular, making it {\it infinite-dimensional}. This observation is based on the following standard counterpart of the Birkhoff ergodic theorem, see e.g., \cite{Aaronson} for more delicate results in this direction.
\begin{prop}\label{Prop6.cerg} Let $(X,\mu)$ be a compact metric space with Borel probability measure $\mu$ on it and let $T:X\to X$ be an  ergodic map. Assume that $f:X\to\R$ be a non-negative measurable function such that $\int_X f(x)\,\mu(dx)=\infty$. Then
\begin{equation}
\lim_{N\to\infty}\frac1N\sum_{n=1}^Nf(T^nx)=\infty
\end{equation}
for almost all $x\in X$.
\end{prop}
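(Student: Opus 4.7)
The plan is to reduce the statement to the standard Birkhoff ergodic theorem via monotone truncation. First, for each $M>0$, I would introduce the bounded function $f_M(x):=\min\{f(x),M\}$. Since $\mu$ is a probability measure and $0\le f_M\le M$, the truncation $f_M$ is integrable with $\int_X f_M\,\mu(dx)\le M<\infty$. Applying Birkhoff's ergodic theorem to the ergodic system $(X,\mu,T)$ and the function $f_M$ produces a set $X_M\subset X$ of full $\mu$-measure on which
$$
\lim_{N\to\infty}\frac1N\sum_{n=1}^N f_M(T^n x)=\int_X f_M(y)\,\mu(dy).
$$

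Second, I would use the pointwise bound $f\ge f_M$, which yields for every $x\in X_M$ the lower bound
$$
\liminf_{N\to\infty}\frac1N\sum_{n=1}^N f(T^n x)\ge \int_X f_M(y)\,\mu(dy).
$$
By the monotone convergence theorem, $\int_X f_M\,\mu(dy)\nearrow \int_X f\,\mu(dy)=\infty$ as $M\to\infty$, so the right-hand side of this inequality can be made arbitrarily large.

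Third, in order to make the two limits compatible I would choose a countable sequence $M_k\to\infty$ (for instance $M_k=k$) and set $X_\infty:=\bigcap_{k\in\mathbb N}X_{M_k}$. As a countable intersection of full-measure sets, $X_\infty$ itself has full measure. For every $x\in X_\infty$ and every $k\in\mathbb N$, the lower bound above (applied with $M=M_k$) gives
$$
\liminf_{N\to\infty}\frac1N\sum_{n=1}^N f(T^n x)\ge \int_X f_{M_k}(y)\,\mu(dy),
$$
and letting $k\to\infty$ yields $\liminf_N\frac1N\sum_{n=1}^N f(T^n x)=\infty$, which forces the full limit to equal $+\infty$ for every $x\in X_\infty$.

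The argument is essentially routine; the only mild subtlety — and what I expect to be the single point one must be careful about — is to avoid the implicit uncountable union of null sets one would face by trying to invoke Birkhoff for all $M\in(0,\infty)$ simultaneously. The countable diagonal choice $\{M_k\}$ circumvents this, and since the ergodic averages of $f$ are monotone non-decreasing in the integrand, passing to the supremum over $k$ is automatic.
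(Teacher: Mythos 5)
Your proof is correct and follows essentially the same route as the paper's: the paper approximates $f$ from below by a countable sequence of integrable simple functions with diverging integrals, applies Birkhoff to each, and intersects the full-measure sets, while you use the truncations $\min\{f,M_k\}$ for the same purpose. The difference is purely cosmetic.
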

\begin{proof} Indeed, by the definition of Lebesgue integration, there exist a sequence of simple functions $f_l\in L^1(X,\mu)$ such that
$$
0\le f_l(x)\le f(x),\ \ f_l(x)\to f(x) \text{ almost everywhere }
$$
and $\int_Xf_l(x)\,\mu(dx)\to\infty$. Take $x\in X$ such that Birkhoff ergodic theorem holds for it for all $l$. Then
$$
\lim_{N\to\infty}\frac1N\sum_{n=1}^Nf(T^nx)\ge\lim_{N\to\infty}\frac1N\sum_{n=1}^Nf_l(T^nx)=
\int_Xf_l(x)\mu(dx)\to\infty
$$
as $l\to\infty$ and the proposition is proved.
\end{proof}
The proved statement allows us to expect that, in the case where the momentums of the energy do not exist, the global Lyapunov exponents and Lyapunov dimension of the attractors also may be infinite (since the ergodic sums involving energy are usually present at least in the estimates for these exponents, see \cite{CF1,Deb} for the details). This in turn allows us to expect that the Hausdorff and fractal dimension of the attractor may be also infinite. To be more precise, we state the following conjecture.
\begin{conjecture}\label{Con6.inf} Let the exponents $p$, $a$, $b$ and probability $q$ satisfy \eqref{6.inf}. Then, there are nonlinearity $f$ and right-hand side $g$ satisfying the assumptions of Theorem \ref{Th5.ber} such that the associated random attractor $\mathcal A(\eta)$ has an infinite Hausdorff and fractal dimensions in $E$.
\end{conjecture}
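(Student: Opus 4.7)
The plan is to transfer the mechanism of the toy model of Section \ref{s6} into the PDE setting by exploiting the absence of a first moment already established in the heuristic computation \eqref{7.besk}. The starting point would be a functional strengthening of Proposition \ref{Prop6.cerg}: if $\Phi(\xi_u(t))$ is any non-negative continuous observable on the absorbing ball whose $\mu$-mean along a typical trajectory is infinite, then its Birkhoff averages diverge almost surely. Applied to the energy itself (or to a cleverly chosen projection of $\xi_u$), this would guarantee that $\mu$-typical solutions on $\mathcal A(\eta)$ revisit arbitrarily high energy shells on a set of times of positive density. The conjecture would then follow if one could show that each revisit to a shell $\|\xi_u\|_E\sim R$ adds at least one new unstable direction in the linearization, with the number of such directions growing without bound as $R\to\infty$.

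To implement this I would pick $g=c\,e_1$ concentrated on the first Dirichlet eigenfunction, so that the projection $u_1(t):=(u(t),e_1)$ obeys an approximate ODE of the form $u_1''+\eta(t)u_1'+\lambda_1 u_1+P_1 f(u)=c$, closely mimicking the first equation of the toy system. Next I would analyze the linearized equation $\Dt^2 v+\eta(t)\Dt v-\Dx v+f'(u(t))v=0$ along such a solution. Writing $v=\sum v_n e_n$ and treating the off-diagonal coupling $(f'(u(t))e_m,e_n)$ perturbatively, I would seek, for each $n$, intervals of time on which the diagonal coefficient $(f'(u(t))e_n,e_n)$ exceeds $\lambda_n$ by enough to produce a positive contribution to the $n$-th Lyapunov exponent in the sense of Oseledets. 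The required excursions of $u$ to very high amplitudes come for free from the divergence of the Birkhoff averages above, and their frequency should, at least heuristically, be $n$-independent once the shell $\|\xi_u\|_E\gtrsim \lambda_n^{1/2}$ is visited with positive density. A Lyapunov-type lower bound for $\dim_H\mathcal A(\eta)$ (Douady--Oesterl\'e / Ruelle--Ledrappier--Young in the random setting, cf. \cite{Deb,CF1}) would then complete the proof.

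The principal obstacle, and the reason this statement is stated only as a conjecture, is that the nonlinearity $f(u)$ is a \emph{pointwise} function of $u$, so in the eigenbasis of $-\Dx$ it is completely non-diagonal and the modes $v_n$ remain strongly coupled through the matrix $(f'(u(t))e_m,e_n)$. In the toy model this obstruction simply does not appear because $f$ acts diagonally by construction. To push the argument through for \eqref{2.wave} one would therefore need either (a) to choose $f$ so that on the large-amplitude subset of the absorbing ball its action in the eigenbasis is asymptotically diagonally dominant, perhaps by exploiting concentration and WKB-type behavior of high-frequency eigenfunctions together with a carefully tuned superlinear $f$, or (b) to bypass the modewise Lyapunov analysis entirely and lower-bound $\dim_H\mathcal A(\eta)$ by constructing explicitly a family of pairwise well-separated tempered complete trajectories indexed by an infinite-dimensional parameter. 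Quantifying how often $u(t)$ excites a genuinely new direction, rather than merely pumping energy back into already-active low modes, is the hard part; without an $x$-dependent or mode-dependent mechanism available in $\gamma(t)$ it is not at all clear that the ergodic divergence of $\Phi(\xi_u(t))$ is strong enough to defeat the mode-mixing effect of $f$, which is exactly why the authors settle for verifying the phenomenon rigorously only in the decoupled toy model.
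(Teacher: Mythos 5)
You have not proved the statement, and neither does the paper: this is stated explicitly as a conjecture, and the authors write that they are ``unable to prove or disprove'' it, offering only the decoupled toy system \eqref{6.odes} of Example \ref{Ex6.inf} (Theorem \ref{Th6.main}) as supporting evidence. So there is no proof in the paper against which your argument could be matched, and your text is, by your own admission in the final paragraph, a programme rather than a proof. The honest self-assessment is correct: the step that fails is exactly the one you name. The divergence of the ergodic averages of the energy (your functional version of Proposition \ref{Prop6.cerg}, which is essentially the computation \eqref{7.besk} under condition \eqref{6.inf}) gives you unbounded excursions of $\|\xi_u(t)\|_E$, but it does not give you any control of the linearized flow $\Dt^2 v+\eta(t)\Dt v-\Dx v+f'(u(t))v=0$ mode by mode, because $f'(u(t))$ acts as a full (non-diagonal, time-dependent) operator in the eigenbasis of $-\Dx$. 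Without diagonal dominance there is no way to convert ``the energy is large infinitely often'' into ``the $n$-th Oseledets exponent is positive for every $n$,'' and the Douady--Oesterl\'e/Ledrappier--Young machinery you invoke (\cite{Deb,CF1}) only gives \emph{upper} bounds on dimension in any case; a lower bound on $\dim_H$ requires constructing an actual infinite-dimensional family of distinct tempered complete trajectories, which is precisely what the paper achieves only in the toy model where the equations for $u_k$, $k\ge 2$, decouple by fiat and the attractor factors as the product \eqref{6.pro}.

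Two smaller points. First, your proposed choice $g=c\,e_1$ does not make the first Fourier mode obey a closed ODE: the term $P_1f(u)$ couples it back to all other modes, so even the ``first equation of the toy system'' is not reproduced in the PDE. Second, the quantity whose mean diverges in \eqref{7.besk} is the radius $R_\eta(0)$ of the tempered absorbing set, i.e.\ an a priori \emph{upper} bound on the energy along the pullback attractor; divergence of the mean of an upper bound does not by itself show that typical trajectories on $\mathcal A(\eta)$ actually attain arbitrarily large energy, so even the first step of your plan (positive-density visits to high energy shells) needs an additional lower-bound argument. None of this means your analysis is misguided --- it is a fair description of what a proof would require and of why the authors retreat to Example \ref{Ex6.inf} --- but as a proof of the conjecture it has an essential, acknowledged gap, and the correct conclusion is that the statement remains open.
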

Up to the moment, we are unable to prove or disprove this conjecture. However, to support this conjecture, we conclude the section by a simplified model example where this conjecture holds.
\begin{example}\label{Ex6.inf} Let $H=l_2$ (space of square summable sequences) and consider the random dynamical system in $H$ generated by the following equations:
\begin{equation}\label{6.odes}
\frac d{dt} u_1+\eta(t)u_1(t)=1,\ \ \frac d{dt}u_k+k^4u_k=u_1(t)u_k-u_k^3, \ k=2,3,\cdots,
\end{equation}
where $u=(u_1,u_2,\cdots)\in H$ and $\eta\in\Gamma$ is exactly the Bernoulli process used in Theorem \ref{Th5.ber}. The first equation of this system models the energy evolution of \eqref{4.wave} and the rest equations give some coupling of the first equation with a parabolic PDE.
\par
System of ODEs \eqref{6.odes} is simple enough to be solved explicitly. In particular, if $aq-(1-q)b>0$,
\begin{equation}\label{6.good}
u_1(t)=u_{1,\eta}(t)=\int_{-\infty}^te^{-\int_s^t\eta(l)\,dl}\,ds
\end{equation}
is a unique tempered complete solution of the first equation (for almost all $\eta\in\Gamma$). Moreover, arguing as in \eqref{7.besk}, we get
\begin{equation}\label{6.uinf}
\int_{\eta\in\Gamma}u_{1,\eta}(0)\mu(d\eta)=\infty
\end{equation}
if
\begin{equation}\label{6.infinf}
\ln(qe^{-a}+(1-q)e^{b})>0>-aq+(1-q)b.
\end{equation}
We claim that under this assumption the random attractor associated with \eqref{6.odes} is infinite-dimensional.
\begin{theorem}\label{Th6.main} Let the exponents $a,b>0$ and the probability $q\in(0,1)$ satisfy \eqref{6.infinf}. Then the random attractor $\mathcal A(\eta)$ for system \eqref{6.infinf} in $H$ has infinite Hausdorff and fractal dimensions in $H$:
\begin{equation}\label{6.dim}
\dim_H(\mathcal A(\eta),H)=\dim_f(\mathcal A(\eta),H)=\infty
\end{equation}
for almost all $\eta\in\Gamma$.
\end{theorem}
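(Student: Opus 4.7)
The system \eqref{6.odes} decouples in a crucial way: once the first coordinate $u_1(t)$ is fixed, each equation for $u_k$, $k\ge2$, is an independent scalar ODE driven by $u_1$. The plan is to solve the first equation explicitly, use Proposition~\ref{Prop6.cerg} to extract quantitative information from the infinite-mean hypothesis \eqref{6.infinf}, describe the pullback attractor of every scalar fiber as a non-degenerate interval, and then read off \eqref{6.dim} from the resulting product description of $\mathcal{A}(\eta)\subset H=l_2$.

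Because $aq-(1-q)b>0$, the first equation admits the unique tempered positive complete trajectory given by \eqref{6.good}, and the computation carried out in \eqref{7.besk} (with the trivial modification corresponding to $p=0$) shows that the first part of \eqref{6.infinf} is exactly the condition under which $\int_\Gamma u_{1,\eta}(0)\,\mu(d\eta)=\infty$. Applying Proposition~\ref{Prop6.cerg} to the non-negative observable $\eta\mapsto u_{1,\eta}(0)$ and the ergodic shift yields, for $\mu$-a.e.\ $\eta$,
\begin{equation*}
\lim_{\sigma\to\infty}\frac1\sigma\int_{-\sigma}^0 u_{1,\eta}(l)\,dl=+\infty,
\end{equation*}
so the Birkhoff averages of $u_1$ beat any polynomial in $k$; this is the key mechanism behind infinite-dimensionality.

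Fix such an $\eta$ and write $u_1(t):=u_{1,\eta}(t)$. The scalar equation $\Dt u_k=(u_1-k^4)u_k-u_k^3$ preserves the sign of $u_k$, and the substitution $w=u_k^{-2}$ linearises it to $\Dt w+2(u_1-k^4)w=2$. Hence, for every $k\ge2$, the unique positive tempered solution is
\begin{equation*}
u_k^+(t):=w_k(t)^{-1/2},\qquad w_k(t):=2\int_{-\infty}^t e^{-2\int_s^t(u_1(l)-k^4)\,dl}\,ds,
\end{equation*}
its negative counterpart is $-u_k^+(t)$, and $0$ is the only other tempered solution of the $k$-th fiber. Convergence of the integral at $s=-\infty$ is guaranteed by the super-linear Birkhoff growth displayed above, so $u_k^+(0)>0$ for every $k\ge2$. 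A lower bound on $w_k(0)$ coming from the contribution of $s\in[-1,0]$ and the local boundedness of $u_1$ gives $w_k(0)\ge C(\eta)^{-1}k^{-4}e^{2k^4}$ for large $k$, hence $u_k^+(0)^2\le C(\eta)\,k^4e^{-2k^4}$ and $\sum_{k\ge2}u_k^+(0)^2<\infty$ almost surely.

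Because each scalar fiber is a strictly monotone (in the initial datum) flow whose pullback image of $[-M,M]$ is an interval with endpoints converging to $\pm u_k^+(0)$, the fiberwise pullback attractor is the full non-degenerate interval $[-u_k^+(0),u_k^+(0)]$; intermediate values are realised by heteroclinic-type connections between $0$ and $\pm u_k^+(t)$. Combined with the decoupling and the square summability established above, this yields the product description $\mathcal{A}(\eta)=\{u_{1,\eta}(0)\}\times\prod_{k\ge2}[-u_k^+(0),u_k^+(0)]$. For every $N\in\N$, the affine rectangle consisting of the first coordinate $u_{1,\eta}(0)$, of the factors $[-u_k^+(0),u_k^+(0)]$ in positions $2\le k\le N+1$, and of zero in the remaining coordinates, lies in $\mathcal{A}(\eta)$ and is bi-Lipschitz equivalent to $[-1,1]^N$ inside an $N$-dimensional affine subspace of $l_2$; therefore $\dim_H\mathcal{A}(\eta)\ge\dim_f\mathcal{A}(\eta)\ge N$ for every $N$, proving \eqref{6.dim}. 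The main obstacle is precisely this product-of-intervals description: one has to rule out that the fiber attractor collapses to the three-point set $\{-u_k^+(0),0,u_k^+(0)\}$, and to check that the full product is genuinely attained as the pullback limit of a tempered absorbing ball in $l_2$ rather than merely contained in it.
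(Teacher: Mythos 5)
Your proof is correct and follows the same core strategy as the paper: decouple the system, solve the first equation of \eqref{6.odes} explicitly, invoke Proposition \ref{Prop6.cerg} together with \eqref{6.uinf} to guarantee convergence of the integral defining the nontrivial tempered solution of each fiber equation \eqref{6.k}, and conclude from the product structure \eqref{6.pro}. The one place where you genuinely diverge is the step you yourself flag as the main obstacle, namely ruling out that a fiber attractor collapses to the three-point set $\{-u_k^+(0),0,u_k^+(0)\}$. You handle this by exhibiting the heteroclinic-type complete trajectories explicitly via the substitution $w=u_k^{-2}$; note that this contradicts your earlier sentence claiming that $0$ and $\pm u_k^+$ are the \emph{only} tempered complete solutions of the fiber --- that sentence should be deleted, since the heteroclinics are themselves complete tempered trajectories and are exactly what fills in the interval. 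The paper disposes of the same issue in one line: the attractor is connected, hence each one-dimensional section $\mathcal A^k(\eta)$ is automatically a closed interval, so it suffices to produce a single nonzero tempered complete trajectory, i.e.\ to show $\mathcal A^k(\eta)\ne\{0\}$. Your route costs a little more work but yields the explicit description of all complete tempered trajectories of each fiber; the paper's route is shorter but leans on the abstract connectedness of pullback attractors. Your additional checks --- the bound $u_k^+(0)^2\le C(\eta)k^4e^{-2k^4}$ giving square-summability so that the product sits in $l_2$, and the embedded $N$-cube argument giving $\dim_H\ge N$ for every $N$ --- are correct and make explicit what the paper leaves implicit (the former is subsumed there by the construction of a tempered absorbing ball in $l_1\subset H$).
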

\begin{proof} We first briefly discuss why this random attractor exists. The existence of a random tempered absorbing set for the first component $u_1$ of system \eqref{6.odes} follows from the explicit formula for the solution and Lemma \ref{Lem5.cru}. Let us now assume that $u_1(\tau)$ is already in this absorbing set and get the estimates for $u_k$. To this end, multiplying the $k$th equation by $\operatorname{sgn}(u_k)$ and taking a sum, after the standard estimates, we get
$$
\frac d{dt}\(\sum_{k=2}^\infty|u_k|\)+\sum_{k=2}^\infty k^4|u_k|\le C\(\sum_{k=2}^\infty\frac1{k^2}\)|u_1|\le C|u_1(t)|.
$$
Integrating this inequality and using that $u_1(t)$ is tempered, we get a tempered absorbing ball for $u$ in $l_1\subset H$. To obtain a compact absorbing set, it is enough to use the parabolic smoothing property in a standard way. Thus, the existence of a random attractor $\mathcal A(\eta)$ is verified.
\par
Recall also that a random attractor consists of all complete tempered trajectories of the system considered, so it is enough to find all such trajectories. The first equation is linear and is independent of the other equations, so such trajectory is unique and is given by \eqref{6.good}. Thus, to find $\mathcal K_\eta$, we need to fix $u_1(t)=u_{1,\eta}(t)$ in the other equations of \eqref{6.good} and find the tempered attractor $\mathcal A^k(\eta)$, $k=2,3,\cdots$ for every component of \eqref{6.good} separately. Then, the desired attractor $\mathcal A(\eta)$ for the whole system is presented as a product
\begin{equation}\label{6.pro}
\mathcal A(\eta)=\{u_{1,\eta}(0)\}\times\otimes_{k=2}^\infty\mathcal A^k(\eta).
\end{equation}
Moreover, since the attractor is always connected, every $\mathcal A^k(\eta)$ is a closed interval, so to prove the desired infinite-dimensionality, it is enough to prove that $\mathcal A^k(\eta)\ne\{0\}$ for all $k$. In other words, we need to find a non-zero tempered trajectory $u_k=u_k(t)$ for the equation
\begin{equation}\label{6.k}
\frac d{dt} u_k+k^4u_k=u_{1,\eta}(t)u_k-u_k^3
\end{equation}
using that $u_{1,\eta}(t)$ is tempered and have an infinite mean.
\par
To this end, we first note that every solution of equation \eqref{6.k} is either tempered as $t\to-\infty$ or blows up backward in time (this can be easily shown by comparison using the fact that $u_{1,\eta}(t)$ is tempered), so any complete solution $u_k(t)$, $t\in\R$ is automatically tempered.
\par
We construct this solution by solving equation \eqref{6.k} explicitly. Namely,
$$
u_{k,\eta}(t):=\frac1{\sqrt{2\int_{-\infty}^te^{2\int_s^t(k^4-u_{1,\eta}(l))\,dl}\,ds}}.
$$
Indeed, the finiteness of the integral is guaranteed by \eqref{6.uinf} and Proposition \eqref{Prop6.cerg} (analogously to the proof of Lemma \ref{Lem5.cru}). Thus, we have proved that $\mathcal A^k(\eta)\ne \{0\}$ for almost all $k$ and the theorem is proved.
\end{proof}
\begin{rem} Actually, we have found explicitly the random attractor for the previous example:
$$
\mathcal A(\eta)=\{u_{1,\eta}(0)\}\times\otimes_{k=2}^\infty[-u_{k,\eta}(0),u_{k,\eta}(0)].
$$
It would be interesting to compute (e.g., using this expression) the typical Kolmogorov's entropy of this infinite-dimensional attractor.
\end{rem}
\end{example}

\end{document}